\newtheorem{Theorem}{Theorem}[section]
\newtheorem{Definition}[Theorem]{Definition}
\newtheorem{Lemma}[Theorem]{Lemma}
\newtheorem{Remark}[Theorem]{Remark}
\newtheorem{Hypothesis}{Hypothesis}
\numberwithin{equation}{section}
\DeclareMathOperator{\diver}{div}
\begin{document}

\def\le{\left}
\def\r{\right}
\def\cost{\mbox{const}}
\def\a{\alpha}
\def\d{\delta}
\def\ph{\varphi}
\def\e{\epsilon}
\def\la{\lambda}
\def\si{\sigma}
\def\La{\Lambda}
\def\B{{\cal B}}
\def\A{{\mathcal A}}
\def\L{{\mathcal L}}
\def\O{{\mathcal O}}
\def\bO{\overline{{\mathcal O}}}
\def\F{{\mathcal F}}
\def\K{{\mathcal K}}
\def\H{{\mathcal H}}
\def\D{{\mathcal D}}
\def\C{{\mathcal C}}
\def\M{{\mathcal M}}
\def\N{{\mathcal N}}
\def\G{{\mathcal G}}
\def\T{{\mathcal T}}
\def\R{{\mathbb R}}
\def\I{{\mathcal I}}

\def\bw{\overline{W}}
\def\phin{\|\varphi\|_{0}}
\def\s0t{\sup_{t \in [0,T]}}
\def\lt{\lim_{t\rightarrow 0}}
\def\iot{\int_{0}^{t}}
\def\ioi{\int_0^{+\infty}}
\def\ds{\displaystyle}
\def\pag{\vfill\eject}
\def\fine{\par\vfill\supereject\end}
\def\acapo{\hfill\break}

\def\beq{\begin{equation}}
\def\eeq{\end{equation}}
\def\barr{\begin{array}}
\def\earr{\end{array}}
\def\vs{\vspace{.1mm}   \\}
\def\rd{\reals\,^{d}}
\def\rn{\reals\,^{n}}
\def\rr{\reals\,^{r}}
\def\bD{\overline{{\mathcal D}}}
\newcommand{\dimo}{\hfill \break {\bf Proof - }}
\newcommand{\nat}{\mathbb N}
\newcommand{\E}{\mathbb E}
\newcommand{\Pro}{\mathbb P}
\newcommand{\com}{{\scriptstyle \circ}}
\newcommand{\reals}{\mathbb R}

\newcommand{\red}[1]{\textcolor{red}{#1}}

\def\Amu{{A_\mu}}
\def\Qmu{{Q_\mu}}
\def\Smu{{S_\mu}}
\def\H{{\mathcal{H}}}
\def\Im{{\textnormal{Im }}}
\def\Tr{{\textnormal{Tr}}}
\def\E{{\mathbb{E}}}
\def\P{{\mathbb{P}}}
\def\span{{\textnormal{span}}}
\title{A generalization of the Freidlin-Wentcell theorem on averaging of Hamiltonian systems}
\author{Yichun Zhu\\
\normalsize University of Maryland, College Park\\ United States}
\date{}

\date{}

\maketitle

\begin{abstract}
In this paper, we generalize the classical Freidlin-Wentzell's theorem for random perturbations of Hamiltonian systems. In stead of the two-dimensional standard Brownian motion, the coefficient for the noise term is no longer the identity matrix but a state-dependent matrix plus a state-dependent matrix that converges uniformly to 0 on any compact sets as $\e$ tends to 0. We also take the drift term into consideration where the drfit term also contains two parts, the state-dependent mapping and a state-dependent mapping that converges uniformly to 0 on any compact sets as $\e$ tends to 0. In the proof, we use the result of generalized differential operator. We also adapt a new way to prove the weak convergence inside the edge by constructing an auxiliary process and apply Girsanov's theorem in the proof of gluing condition.
\end{abstract}

\section{Introduction}
Consider the following system 
\begin{equation}\label{qsde}
\begin{cases}
d q^\e_t = \frac{1}{\e} g (q^\e_t) dt + [b(q^\e_t)+b^{\e}(q^\e_t)] dt + [\sigma(q^\e_t)  + \sigma^\e(q^\e_t)] dW_t ,\\
q^\e_0 = q \in\,\mathbb{R}^2,
\end{cases}
\end{equation}
where $\e$ is a small positive constant and $W_t$ is a standard two-dimensional Brownian motion defined on the stochastic basis $(\Omega, \mathcal{F}, \{\mathcal{F}_t\}_{t \geq 0}, \mathbb{P})$. We assume that $b, b^\e:\mathbb{R}^2\to \mathbb{R}^2$ and $\si, \si^\e:\mathbb{R}^2\to \mathbb{R}^{2\times 2}$ are differentiable mappings having bounded derivatives and the mappings $b^\e$ and $\si^\e$ converge to zero, as $\e$ goes to zero, uniformly on any compact subset of $\mathbb{R}^2$.

In what follows, we shall assume that there exists a function $H: \mathbb{R}^2 \mapsto \mathbb{R}^2$ such that
\begin{equation}\label{Hamiltonian}
g(x) \cdot \nabla H(x)= 0,\ \ \ \  x \in \mathbb{R}^2.
\end{equation}
This means in particular that if we denote by $X^\e_t$ the solution of the unperturbated system 
\begin{equation*}
\frac{dX_t^\e}{dt} = \frac{1}{\e} g(X_t^\e),
\end{equation*}
then 
$H(X^\e_t)=H(X^\e_0)$, for every $t\geq 0$ and $\e>0$. This means that $X^\e_t$ remains on the same level set of $H$, for every $t\geq 0$.
Moreover, if we define
\begin{equation}\label{invariant measure}
a(x) = |g(x)|/|\nabla H(x)|,
\end{equation}
then it can be easily proved that $a^{-1}(x)$ is the density of the invariant measure for  $X^\e_t$ .

Now, for every $x \geq \inf_{z\in \mathbb{R}^2}H(z)$, we denote by $C(x)$ the $x$-level set of $H$, that is
\[C(x)=\le\{z \in\,\mathbb{R}^2\,:\,H(z)=x\r\}.\]
The set  $C(x)$ may consist of several connected components
\[C(x)=\bigcup_{k=1}^{N(x)} C_k(x),\]
and for every $z \in\,\mathbb{R}^2$  we will denote by $C_{k(z)}(H(z))$  the connected component of the level set $C(H(z))$, to which the point $z$ belongs. If we identify all points in $\mathbb{R}^2$  belonging to the same connected component of a given level set $C(x)$ of the Hamiltonian $H$, we obtain a graph $\Gamma$,  given by several intervals $I_1,\ldots I_n$ and vertices $O_1,\ldots, O_m$. In what follows, we shall denote by $\Pi:\mathbb{R}^2\to \Gamma$ the {\em identification map}, that associates to every point $z \in\,\mathbb{R}^2$ the corresponding point $\Pi(z)$ on the graph $\Gamma$. We have $\Pi(z)=(H(z),i(z))$, where $i(z)$ denotes the number of the interval on the graph $\Gamma$, containing the point $\Pi(z)$.

In the present paper we are interested in the asymptotic behavior of the $\Gamma$-valued process $\Pi(q^\e)$. Namely, we want to show that $\Pi(q^\e)$ converges in distribution in the space $C([0,T];\Gamma)$,  as $\e\to 0$, to a Markov process in $\Gamma$, whose generator is explicitly described in terms of  suitable differential operators in the interior of every edge and suitable gluing conditions at each interior vertex.  

If we define $x^\e_t=H(q^\e_t)$, as an immediate consequence of the It\^o's formula we have
\begin{equation*}
dx^\e_t=\mathcal{L}_0 H (q^\e_t) dt + \mathcal{R}_0 H (q^\e_t) dW_t + \mathcal{L}_0^\e H (q^\e_t)dt + \mathcal{R}_0^\e H (q^\e_t)dW_t
\end{equation*}
where
\begin{equation*}
\mathcal{L}_0 f (x)=\nabla f(x)^t \cdot b(x) + \frac{1}{2} \sum_{i,j} (\sigma \sigma^*)_{i,j}(x) \frac{\partial^2 f(x)}{\partial x_i \partial x_j},
\end{equation*}
\begin{equation*}
\mathcal{R}_0 f(x)=\nabla f(x)^t \sigma(x),
\end{equation*}
\begin{equation*}
\mathcal{L}_0^\e f(x)=\nabla f(x)^t \cdot b^\e(x) + \frac{1}{2} \sum_{i,j} [\sigma (\sigma^\e)^* +  \sigma^\e \sigma^* + \sigma^\e (\sigma^\e)^*]_{i,j}(x) \frac{\partial^2 f(x)}{\partial x_i \partial x_j},
\end{equation*}
and
\begin{equation*}
\mathcal{R}_0^\e f (x)=\nabla f(x)^t \sigma^\e(x).
\end{equation*}

Next, for every $x \in I_i$, we define
\begin{equation*}
A_i(x)=\frac{1}{T_i(x)}\oint_{C_i(x)}{\mathcal{R}_0 H (u) \mathcal{R}_0 H(u)^*\frac{dl}{|g(u)|}}
\end{equation*}
and
\begin{equation*}
B_i(x)=\frac{1}{T_i(x)}\oint_{C_i(x)} {\mathcal{L}_0 H (u)  \frac{dl}{|g(u)|}},
\end{equation*}
where
\begin{equation*}
T_i(x)={\oint_{C_i(x)} \frac{dl}{|g(u)|}}.
\end{equation*}
Moreover, we define
\begin{equation}\label{d1}
\mathcal{L}_i f(x) = B_i(x)\frac{d}{dx}f + \frac{1}{2} A_i(x) \frac{d^2}{dx^2}f.
\end{equation}
With these notations, we can introduce the following operator acting on functions defined on the graph $\Gamma$.

\begin{Definition}\label{def gluing condition}
For $I_i \sim O_k$, let
\begin{equation}\label{gluing probability}
p_{ki}=\frac{\beta_{ki}}{\sum_{i: I_i \in O_k} \beta_{ki}},
\end{equation}
where 
\begin{equation*}
\beta_{ki}=\oint_{C_{ki}} \frac{|\nabla H(x) \sigma\sigma^*(x) \nabla H(x)|}{|g(x)|}dl.
\end{equation*}
We denote by $D(L)\subseteq C(\Gamma; \mathbb{R}) $ the set consisting all functions  $f$ defined on the graph $\Gamma$ such that $\mathcal{L}_jf$ is well defined in the interior of the edge $I_j$ and for every $I_j\sim O_k$ there exists finite
\[\lim_{x\to O_k}\mathcal{L}_j f(x)\]
and the limit is independent of the edge $I_j$. Moreover, for each interior vertex $O_k$ 
\[\sum_{j\,:\,I_j\sim O_k} \pm \rho_{kj}f_j^\prime(H(O_k))= 0,\]
where $f_j^\prime$ denotes the derivative of $f$ with respect to the local coordinate $\la$, along the edge $I_j$ and the signs $\pm$ are taken if $H>H(O_k)$ or $H<H(O_k)$.

Next, for every $f \in\,D(L) $, we  define
\[L f(x)=\begin{cases}
\mathcal{L}_j f(x),  &  \text{if}\ x\ \text{is an interior point of}\ I_j,\\
\lim_{x\to O_k}\mathcal{L}_j f(x),   &  \text{if}\ x\ \text{is the vertex}\ O_k\ \text{and}\ I_j\sim O_k.
\end{cases}\]
\end{Definition} 
The main result of this paper is given by the following theorem.
\begin{Theorem}\label{main}
Suppose $q^\e_t$ satisfies the following stochastic differential equation
\begin{equation*}
d q^\e_t = \frac{1}{\e} g (q^\e_t) dt + b(q^\e_t) dt +\sigma(q^\e_t)  dW_t + b^{\e}(q^\e_t) dt + \sigma^\e(q^\e_t) dW_t \\
\end{equation*}
with initial condition $q^\e_0=q$. Assume the coefficients satisfy Hypothesis \ref{h1} and the Hamiltonian $H: \mathbb{R}^2 \to \mathbb{R}$ introduced in   (\ref{Hamiltonian}) satisfies Hypothesis \ref{h2}. Then the process $\Pi(q^\e_t)=(x^\e_t, i(q^\e_t))$ converges weakly  in $C([0,T];\Gamma)$ to the Markov process $Y$ generated by the operator $(L,D(L))$, as described in Definition \ref{def gluing condition}.
\end{Theorem}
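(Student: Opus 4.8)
The plan is to follow the classical Freidlin--Wentzell strategy of proving tightness together with the identification of all subsequential limits as the (unique) solution of the martingale problem for $(L, D(L))$. The argument splits into four pieces: (i) tightness of $\{\Pi(q^\e_t)\}$ in $C([0,T];\Gamma)$; (ii) identification of the limiting dynamics in the interior of each edge $I_i$; (iii) verification of the gluing conditions at each interior vertex $O_k$; and (iv) an appeal to a uniqueness result for the martingale problem for $(L,D(L))$ to conclude that the full family converges. Throughout, the perturbation terms $b^\e$ and $\sigma^\e$ contribute nothing in the limit: since they converge to $0$ uniformly on compacts, a stopping-time localization (stop when $|q^\e_t|$ exceeds a large radius $R$, then let $R\to\infty$) controls their contribution to every drift and quadratic-variation estimate, so $\mathcal{L}_0^\e H$ and $\mathcal{R}_0^\e H$ are asymptotically negligible.

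For tightness, I would work with $x^\e_t = H(q^\e_t)$, which by It\^o satisfies $dx^\e_t = [\mathcal{L}_0 H + \mathcal{L}_0^\e H](q^\e_t)\,dt + [\mathcal{R}_0 H + \mathcal{R}_0^\e H](q^\e_t)\,dW_t$. One first establishes a priori bounds showing $q^\e$ does not escape to infinity in finite time uniformly in $\e$, then proves the Aldous-type tightness criterion for $x^\e_t$ via moment estimates on increments over small time intervals; the $\frac1\e g$ term disappears from $x^\e_t$ exactly because of \eqref{Hamiltonian}, so it creates no difficulty here. Tightness of the discrete component $i(q^\e_t)$ near vertices is handled in the standard way by showing the process spends asymptotically no time near the critical points of $H$ and makes only finitely many excursions. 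Then $\Pi(q^\e_t)$ is tight in $C([0,T];\Gamma)$ and any subsequential limit $Y$ has continuous paths.

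For the interior identification (ii), I would fix an edge $I_i$ and a smooth $f$ supported away from the vertices, and show that $f(\Pi(q^\e_t)) - \int_0^t \mathcal{L}_i f(x^\e_s)\,ds$ is asymptotically a martingale. The heart of this is an averaging principle: on the fast time scale the process $q^\e$ equidistributes along the level curve $C_i(x)$ according to the invariant density $a^{-1}(u)\,dl = dl/|g(u)|$ (suitably normalized by $T_i(x)$), so the drift $\mathcal{L}_0 H(q^\e_s)$ and diffusion coefficient $\mathcal{R}_0 H(q^\e_s)\mathcal{R}_0 H(q^\e_s)^*$ average to $B_i(x^\e_s)$ and $A_i(x^\e_s)$ respectively. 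I expect to implement this with the corrector/perturbed-test-function method: solve an auxiliary Poisson-type equation on the level curve to kill the fluctuating part, using the \emph{generalized differential operator} result cited in the introduction to make sense of the relevant elliptic problem; the error terms are $O(\e)$ after stopping. This gives that every limit point solves the martingale problem for $\mathcal{L}_i$ on $I_i$.

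The main obstacle is step (iii), the gluing conditions at the interior vertices, and this is where the new ideas of the paper enter. Near a vertex $O_k$ the fast motion is no longer a simple rotation on a single closed curve — the level curves pinch — and one must show that the limiting process, upon reaching $O_k$, is instantaneously redistributed onto the adjacent edges $I_j$ with probabilities $p_{kj}$ given by \eqref{gluing probability}, and that the flux condition $\sum_{j:I_j\sim O_k}\pm\rho_{kj}f_j'(H(O_k))=0$ holds. The approach I would take — following the paper's stated strategy — is to construct an \emph{auxiliary process} that agrees with $q^\e$ near the vertex but has more tractable coefficients (e.g. locally constant or suitably frozen $\sigma$), compare the two via \textbf{Girsanov's theorem} to control the Radon--Nikodym derivative on short excursions near $O_k$, and compute the exit distribution from a small neighborhood of $O_k$ for the auxiliary process directly; the weights $\beta_{ki} = \oint_{C_{ki}} |\nabla H\,\sigma\sigma^*\nabla H|/|g|\,dl$ arise as the relative rates at which the auxiliary process is carried out along each branch. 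One must check the two-sided estimate that the time spent in the $\delta$-neighborhood of $O_k$ is $o(1)$ as $\e\to0$ then $\delta\to0$, while the probabilities of exiting along each branch stabilize. Finally, for (iv), I would invoke the well-posedness of the martingale problem for $(L,D(L))$ on $\Gamma$ — the operator $\mathcal{L}_i$ is a non-degenerate one-dimensional diffusion generator in each edge interior (non-degeneracy of $A_i$ coming from Hypothesis \ref{h1}) and the gluing conditions are of the standard Wentzell type that make the glued operator generate a unique Feller process — so the subsequential limit is unique, and hence $\Pi(q^\e_t)\Rightarrow Y$ along the full family, completing the proof.
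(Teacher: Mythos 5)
Your overall architecture (tightness, identification inside edges, gluing at interior vertices, uniqueness) is the standard Freidlin--Wentzell scheme and is consistent with the paper at the level of structure; your interior step also takes a different route (perturbed test function/corrector plus martingale problem) from the paper, which instead builds, period by period of the fast motion, an auxiliary process $\xi^\e_t$ driven by the Brownian motion $\beta^\e_t=\int_0^t V^\e_s\,dW_s$ and a coupled copy $\tilde x^\e_t$ of the limit diffusion, proves $L^4$-closeness of $x^\e$, $\xi^\e$, $\tilde x^\e$ up to the exit time, and then passes to the limit in the resolvent identity $\mathbb{E}_{x_0}[e^{-\la\tau}f(x^\e_\tau)-\int_0^\tau e^{-\la s}(-\la I+\mathcal{L}_i)f(x^\e_s)\,ds]\to f(x_0)$; convergence of the graph-valued process is then concluded from convergence of resolvents (range of $\a I-L$ determines the law) rather than from well-posedness of a martingale problem. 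That difference by itself is acceptable: the corrector route can in principle handle the state-dependent $\sigma$ and the drift, and uniqueness for $(L,D(L))$ is available in Freidlin--Wentzell, so steps (i), (ii), (iv) of your plan are plausible modulo the usual technical work.

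The genuine gap is step (iii), which is the heart of the theorem, and your proposal does not contain an argument there. You assert that for an auxiliary process with ``locally constant or suitably frozen $\sigma$'' the exit distribution from a $\d$-neighborhood of $O_k$ can be ``computed directly'' and that the weights $\beta_{ki}=\oint_{C_{ki}}|\nabla H\,\sigma\sigma^*\nabla H|/|g|\,dl$ ``arise as the relative rates,'' but no mechanism is given, and freezing $\sigma$ does not make this computation tractable: the exit probabilities are determined by the averaging of the diffusion along the separatrices of the saddle, i.e.\ by the geometry of the pinched level sets, not by a constant-coefficient local analysis, and near the saddle the averaged coefficients $A_i$, $B_i$ degenerate (the period $T_i(x)$ blows up logarithmically), so the interior-edge identification cannot simply be pushed to the vertex. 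The paper's proof of its Lemma on the gluing condition goes through an entirely different chain: Krylov--Safonov regularity to show the exit distribution is essentially independent of the starting point in $D_k(\pm\d')$; Khasminskii's representation of the invariant measure of $q^\e$ through the embedded Markov chains on $C_\d$, $C_{\d'}$; the rewriting of $\mathcal{L}_i$ as a generalized differential operator $\frac{d}{dv_i}\frac{d}{du_i}$ (Theorem \ref{t8}) to identify the scale function whose increments over $[H(O_k)+\d',H(O_k)+\d]$ produce exactly the weights $\beta_{ki}$; and, since $a^{-1}$ is not invariant for the perturbed dynamics in general, the construction of compensating drifts $\hat b$, $\hat b^\e$ solving $(\mathcal{L}_0+\hat b\cdot\nabla)^*[a^{-1}]=0$, with Girsanov's theorem used only to show the compensated and original processes have exit distributions differing by $O(\d^{1/2})$ on the short time scale furnished by the non-stickiness estimate $\mathbb{E}_q[\tau^\e_k(\pm\d)]\le C\d^2\ln\d$. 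Your Girsanov idea plays a role analogous to the last step only, and only after the invariant-measure computation has produced $p_{ki}$ for a suitably compensated process; without that computation (or a genuine substitute for it), the identification of the limiting gluing probabilities --- and hence of the domain $D(L)$ --- is missing, so the proposed proof does not establish the theorem.
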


In the present paper, we generalize the well known result by Freidlin-Wentcell on the validity of an averaging principle for Hamiltonian sustems (see \cite[Chapter 8]{fw} )to a more general case and introduce a new method which simplifies some steps in the proof. Compared with the original Freidlin-Wentcell theorem, here we can cover the case of  a state dependent diffusion coefficient and we can also deal with a the drift term. Moreover, both the diffusion coefficient and the drift term are given by the sum of a term of order one and a term of order $\e$. For the terms of order $\e$, we assume that, as $\e$ goes to zero, they converge uniformly to zero over any compact sets in $\mathbb{R}^2$.  

In the proof of the weak convergence in the interior of every edge and the analysis of the behavior of the process near the exterior vertices, we introduce  a new proof, based on the construction of a suitable auxiliary process. What is remarkable is that, unlike the original proof, this new method  unifies the two cases together. In the proof of the svalidity of the gluing conditions, when dealing with the extra terms of order $\e$ in the drift and in the diffusion coefficient, we first introduce  an auxiliary vector field in order to be able to apply the classical results based on  generalized differential operators. Then we apply Girsanov's theorem to get rid of the compensated drift term.

Finally, we would like to mention that our main motivation in studying this type of problem is provided by the paper \cite{cwz}, where  together with Cerrai and Wehr we prove the validity of the Smoluchowskii-Kramer approximation for a system with a finite number of degrees of freedom, in the presence of a state dependent magnetic field $\lambda$. In this case, the problem is regularized by adding a small friction of intensity $\e>0$. After the small mass limit for the regularized problem is taken, we obtain a Hamiltonian system (with Hamiltonian $\la$)  perturbed by a deterministic and a stochastic term. Both perturbations are given by the sum of two terms of different order, that with the notations of the present paper correspond to the drifts $b$ and $b^\e$ and the diffusions $\sigma$ and $\sigma^\e$. Theorem \ref{main} allows us to obtain the limiting behavior, as $\e$ goes to zero, for the slow component of the solution of the Hamiltonian system obtained from the small mass limit of the regularized problem. As shown in Theorem \ref{main}, the limiting process is given by a suitable Markov process on the graph associated with the Hamiltonian $\lambda$.

\section{Some Preliminaries}
\subsection{Hypotheses and notations}

Concerning the coefficients in equation \eqref{qsde}, we assume the following conditions
\begin{Hypothesis} \label{h1}
\begin{enumerate}
\item The mappings $g,\ b,\ b_\e: \mathbb{R}^2 \to \mathbb{R}^2$ and $\sigma ,\ \sigma_\e:  \mathbb{R}^2 \to \mathbb{R}^{2 \times 2}$ are all continuously differentiable  with uniformly bounded derivatives.
\item The functions $b^\e$ and $\sigma^\e$ converge to zero as $\e$ goes to zero,  uniformly on any compact set in $\mathbb{R}^2$.
\end{enumerate}
\end{Hypothesis}

The Hamiltonian $H$ satisfies the following conditions.
\begin{Hypothesis} \label{h2}
$H$ belongs to $\textbf{C}^{4} (\mathbb{R}^2)$ and has bounded second derivatives. Moreover
\begin{enumerate}
\item $H$ has finite many critical points and for any two critical points $x_1$ and $x_2$, $H(x_1) \neq H(x_2)$.
\item For any critical point of $H$, the matrix $\sigma$ is invertible in some neighbor of it.
\item The matrix of second order derivative is non-degenerate at any critical point of H.
\item There exists a constant $C_1>0$ such that $H(x)\geq C_1(|x|^2+1)$, $\nabla H(x) \geq C_1|x|$, and $\Delta H(x) \geq C_1$, for all $x\in \mathbb{R}^2$ such that $|x|$ large enough.
\end{enumerate}
\end{Hypothesis}

As we mentioned in the Introduction, if we identify all points in $\mathbb{R}^2$  belonging to the same connected component of a given level set $C(x)$ of the Hamiltonian $H$, we obtain a graph $\Gamma$,  given by several intervals $I_1,\ldots I_n$ and vertices $O_1,\ldots, O_m$. The vertices will be of two different types,  external and internal vertices. External vertices correspond to local extrema of  $H$, while internal vertices correspond to saddle points of $H$. Among external vertices, we will also include $O_\infty$, the endpoint of the interval in the graph corresponding to the point at infinity.

We have seen that the {\em identification map} associates to every point $z \in\,\mathbb{R}^2$ the corresponding point $\Pi(z)$ on the graph $\Gamma$. Thus, if $\Pi(z)=(H(z),i(z))$, then $i(z)$ denotes the number of the edge on the graph $\Gamma$, containing the point $\Pi(z)$. If $O_i$ is one of the interior vertices, the second coordinate cannot  be chosen in a unique way, as there are three edges having $O_i$ as their endpoint. 

On the graph $\Gamma$, a distance can be introduced in the following way. If $y_1=(x_1,i)$ and $y_2=(x_2,i)$ belong to the same edge $I_i$, then $d(y_1,y_2)=|x_1-x_2|$. In the case $y_1$ and $y_2$ belong to different edges, then
\[d(y_1,y_2)=\min\,\le\{d(y_1, O_{i_1})+d(O_{i_1},O_{i_2})+\cdots+d(O_{i_j},y_2)\r\},\]
where the minimum is taken over all possible paths from $y_1$ to $y_2$, through every possible sequence of vertices $O_{i_1},\ldots,O_{i_{j}}$, connecting $y_1$ to $y_2$.

If $x$ is not a critical value of $H$, then each $C_k(x)$ consists of one periodic trajectory of the vector field $g(z)$. If $x$ is a local extremum of $H(z)$, then, among the components of $C(x)$ there is a set consisting of one point, the rest point of the flow. If $H(x)$ has a saddle point at some point $z_0$ and $H(z_0)=x$, then $C(x)$ consists of three trajectories, the equilibrium point $z_0$ and the two trajectories that have $z_0$ as their limiting point, as $t\to \pm \infty$.

We introduce some other notations that will be used through out the paper. Letter $D$ is used to denote domain, while letter $C$ is for the level set of the Hamiltonian system. For any $A \subseteq \mathbb{R}$, $D(A)$ is the interior of $\{ z \in \mathbb{R}^2 ; H(z) \in A \}$
\begin{equation*}
D_i= \{ z \in \mathbb{R}^2; \Pi(z)= (\cdot, i) \}
\end{equation*}
\begin{equation*}
D_i(H_1, H_2)= D_i \cap D((H_1, H_2))
\end{equation*}
\begin{equation*}
D_k( \pm \d)= D((H(O_k)-\d, H(O_k)+\d)),
\end{equation*}
and
\begin{equation*}
C(x)= \partial D((-\infty, x])
\end{equation*}
\begin{equation*}
C_i(x)=\bar{D}_i \cap C(x)
\end{equation*}
\begin{equation*}
C_k=C(H(O_k))
\end{equation*}
\begin{equation*}
C_{ki}=C(H(O_k)) \cap \partial D_i
\end{equation*}
\begin{equation*}
C_{ki}(\pm \d)=\partial D_k(\pm \d) \cap I_i.
\end{equation*}
Finally, we write $I_i \sim O_k$ if and only if one end of the edge $I_i$ is $O_k$.

Throughout this paper, we shall denote 
\begin{equation*}
\tilde{q}^\e_t= q^\e_{\e t}, \ \ \ \ \tilde{X}^\e_t= X^{\e}_{\e t}.
\end{equation*}

\subsection{Generalized Differential Operator}
In the proof of Theorem \ref{main}, we will need to rewrite each operator $\mathcal{L}_i$ in the form of generalized differential operator. That is we want to find two measures $u_i$ and $v_i$ such that 
\begin{equation*}
\mathcal{L}_i f= \frac{d}{dv_i} (\frac{df}{du_i}).
\end{equation*}
Let $u_i'$ and $v_i'$ be the Radon-Nikodym derivative of $u_i$ and $v_i$ with respect to the Lebesgue measure respectively. For a reason which will be clear lator, we want to choose 
\begin{equation*}
u_i'(x)= (\oint_{C_i(x)} \frac{[(\nabla H)^t \sigma \sigma^* \nabla H](u)}{|g(u)|} dl)^{-1}
\end{equation*}
and
\begin{equation*}
v_i'(x)= \oint_{C_i(x)} \frac{dl}{|g(u)|}.
\end{equation*}
However, with this choice of $u_i$ and $u_i$, $d/du_i d/v_i$ does not generally equal to $\mathcal{L}_i$. In fact
\begin{equation*}
\begin{aligned}
&\frac{d}{dv_i(x)}(\frac{d}{d u_i(x)}f(x)) \\
=& \frac{d}{dx}(f'(x) \cdot \oint_{C_i(x)} \frac{[(\nabla H)^t \sigma \sigma^* \nabla H](u)}{|g(u)|} dl) \cdot \frac{dx}{d v_i(x)}\\
=& f''(x) \oint_{C_i(x)}\frac{[(\nabla H)^t \sigma \sigma^* \nabla H](u)}{|g(u)|} dl \cdot (\oint_{C(x)} \frac{dl}{|g(u)|})^{-1} + \\
&+ f'(x) \frac{d}{dx} \oint_{C_i(x)}\frac{[(\nabla H)^t \sigma \sigma^* \nabla H](u)}{|g(u)|} dl  \cdot (\oint_{C(x)} \frac{dl}{|g(u)|})^{-1}\\
=& A_i(x)f''(x) + f'(x) \frac{d}{dx} \oint_{C_i(x)}\frac{[(\nabla H)^t \sigma \sigma^* \nabla H](u)}{|g(u)|} dl  \cdot (\oint_{C(x)} \frac{dl}{|g(u)|})^{-1}.\\
\end{aligned}
\end{equation*} 
To calculate
\begin{equation*}
\frac{d}{dx} \oint_{C_i(x)}\frac{[(\nabla H)^t \sigma \sigma^* \nabla H](u)}{|g(u)|} dl, 
\end{equation*}
by equation \eqref{invariant measure},
\begin{equation*}
\begin{aligned}
\frac{d}{dx} \oint_{C_i(x)}\frac{[(\nabla H)^t \sigma \sigma^* \nabla H](u)}{|g(u)|} dl 
=&\frac{d}{dx} \oint_{C_i(x)} \frac{\sigma \sigma^* \nabla H}{|g|} \cdot \frac{\nabla H}{|\nabla H|} |\nabla H|dl\\
=&\frac{d}{dx} \oint_{C_i(x)} \frac{\sigma \sigma^* \nabla H}{a} \cdot \vec{\nu} dl\\
\end{aligned}
\end{equation*}
where $\vec{\nu}$ is the unit normal vector of $\partial D_i(x)$. Apply Divergence Theorem, we have
\begin{equation*}
\oint_{C_i(x)} \frac{\sigma \sigma^* \nabla H}{a} \cdot \vec{\nu} dl= \oint_{D_i(x)} \diver(\frac{\sigma \sigma^* \nabla H}{a}) du.
\end{equation*}
To deal with 
\begin{equation*}\label{eq12}
\frac{d}{dx}\oint_{D_i(x)} \diver(\frac{\sigma \sigma^* \nabla H}{a}) du,
\end{equation*}
we have the following lemma
\begin{Lemma}\label{l15}
If we assume that $|\nabla H| \geq c_0 >0$, for any $f \in \textbf{C}^1 (\mathbb{R}^2)$, we have
\begin{equation}\label{eq l15}
\frac{d}{dx}\int_{D_i(x)} f(u) du = \oint_{C_i(x)} f(u) \frac{dl}{|\nabla H(u)|}.
\end{equation}
\end{Lemma}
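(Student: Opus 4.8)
\emph{Proof idea.} Formula \eqref{eq l15} is the differentiated form of the coarea formula, so the natural route is to integrate first and then differentiate. Fix a reference value $x_0$ in the interior of the edge $I_i$ with $x_0<x$, and set
\[
\phi(t)=\oint_{C_i(t)}\frac{f(u)}{|\nabla H(u)|}\,dl ,\qquad t\ \text{an interior point of}\ I_i .
\]
I will show that
\[
\int_{D_i(x)}f(u)\,du-\int_{D_i(x_0)}f(u)\,du=\int_{x_0}^{x}\phi(t)\,dt ,
\]
and that $\phi$ is continuous. Granting these two facts, \eqref{eq l15} is immediate from the fundamental theorem of calculus, since the left-hand side, viewed as a function of $x$ with $x_0$ frozen, then has derivative $\phi(x)$.

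\emph{Step 1 (the integrated identity).} The hypothesis $|\nabla H|\ge c_0>0$ guarantees that in the region swept out by the level curves $C_i(t)$, $t\in(x_0,x)$, the function $H$ has no critical point, so each $C_i(t)$ is a compact curve ($H$ being $C^{1}$, indeed $\mathbf{C}^{4}$ under Hypothesis \ref{h2}) and the set $A:=\{u\in D_i:\ x_0<H(u)<x\}$ is a ``strip'' foliated by these curves; moreover $D_i(x)=D_i(x_0)\cup A$ up to a Lebesgue-null set, and $A$ is bounded (Hypothesis \ref{h2}), so all integrals below are finite. Apply the coarea formula to the scalar function $H$ with integrand $g(u)=f(u)\,|\nabla H(u)|^{-1}\,\mathbf{1}_{A}(u)$:
\[
\int_{A}f(u)\,du=\int_{\R}g(u)\,|\nabla H(u)|\,du
=\int_{\R}\Big(\int_{H^{-1}(t)}g\,d\mathcal{H}^{1}\Big)\,dt
=\int_{x_0}^{x}\oint_{C_i(t)}\frac{f(u)}{|\nabla H(u)|}\,dl\,dt ,
\]
where we used $H^{-1}(t)\cap A=C_i(t)$ for $t\in(x_0,x)$ and $H^{-1}(t)\cap A=\emptyset$ otherwise. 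Since $\int_A f\,du=\int_{D_i(x)}f\,du-\int_{D_i(x_0)}f\,du$, this is exactly the claimed identity. (Geometrically this is the change of variables $u\mapsto(t,s)$, $t=H(u)$ and $s$ the arc length along $C_i(t)$, whose Jacobian equals $|\nabla H(u)|^{-1}$ because an increment $dt$ of $H$ corresponds to a normal displacement $dt/|\nabla H|$; the coarea formula is just the clean way to package this and get the Jacobian right.)

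\emph{Step 2 (continuity of $\phi$, and the delicate point).} Near any interior value $t_*$ of $I_i$, the bound $|\nabla H|\ge c_0$ together with the implicit function theorem parametrizes the curves $C_i(t)$, for $t$ close to $t_*$, by a single circle: $C_i(t)=\{\gamma(t,\theta):\theta\in\mathbb{S}^{1}\}$ with $\gamma$ jointly smooth in $(t,\theta)$. Substituting,
\[
\phi(t)=\int_{\mathbb{S}^{1}}\frac{f(\gamma(t,\theta))}{|\nabla H(\gamma(t,\theta))|}\,|\partial_\theta\gamma(t,\theta)|\,d\theta ,
\]
so $\phi$ is continuous at $t_*$ by dominated convergence, using the continuity of $f$, of $1/|\nabla H|$ (legitimate since $|\nabla H|\ge c_0$), and of $\gamma,\partial_\theta\gamma$. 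This finishes the proof. The one step that is more than routine is the global description of $A$ as a strip cleanly foliated by the closed curves $C_i(t)$ with no change of topology as $t$ ranges over the interior of $I_i$; this is exactly what it means for $I_i$ to be an edge of $\Gamma$ (an interval of noncritical values), so no critical points interfere, and everything else reduces to the standard coarea computation.
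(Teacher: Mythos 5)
Your proof is correct and is essentially the same argument as the paper's: both reduce the area integral over $D_i(x)$ to the iterated integral $\int^x \oint_{C_i(t)} f\,\frac{dl}{|\nabla H|}\,dt$ and then differentiate in the upper limit, with the $1/|\nabla H|$ Jacobian arising from the normal displacement of level sets. The only packaging difference is that you invoke the coarea formula as a named tool (and, commendably, verify the continuity of the level-set integral so that the fundamental theorem of calculus actually applies), whereas the paper derives the same change of variables by hand via the normalized gradient flow $\dot z = \nabla H/|\nabla H|^2$.
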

\begin{proof}
Let $z_t$ be the solution to the following ordinary differential equation
\begin{equation*}
\begin{cases}
dz_t= \frac{\nabla H(z_t)}{|\nabla H(z_t)|^2} dt\\
z_0= \theta \in C_i(x).
\end{cases}
\end{equation*}
Then 
\begin{equation*}
dH(z_t)= \nabla H(z_t) dz_t =\nabla H(z_t) \cdot \frac{\nabla H(z_t)}{|\nabla H(z_t)|^2} dt =dt,
\end{equation*}
which means $H$ can be served as the time and
\begin{equation*}
H(z_t)= t=H,
\end{equation*}
\begin{equation*}
d|z_t|=\frac{1}{|\nabla H(z_t)|}dt=\frac{dH}{|\nabla H(z_t)|}.
\end{equation*}
Let $dl$ denote the unit length on the level set $C_i(x)$, and notice that $z_t$ is orthogonal to the normal vector of the curve $C_i(x)$, $dl \cdot d|z_t|$ is the Lebesgue measure on $\mathbb{R}^2$.
\begin{equation*}
\int_{D_i(x)} f(u) du= \int_{D_i(x)} f(u) dl d|z_t|= \int_{D_i(x)} f(u) \frac{dl}{|\nabla H(z_t)|} dH.
\end{equation*}
The Lemma follows easily by
\begin{equation*}
\int_{D_i(x)} f(u) \frac{dl}{|\nabla H(z_t)|} dH = \int_0^x \int_{C_i(H)} f(u) \frac{dl}{|\nabla H(u)|} dH.
\end{equation*}
\end{proof}
Now we apply equation \eqref{eq l15} and we get
\begin{equation*}
\frac{d}{dx}\oint_{D_i(x)} \diver(\frac{\sigma \sigma^* \nabla H}{a}) du= \oint _{C_i(x)} \diver(\frac{\sigma \sigma^* \nabla H}{a})\frac{dl}{|\nabla H|}.
\end{equation*}
Moreover,
\begin{equation*}
\begin{aligned}
\diver({\frac{\sigma \sigma^* \nabla H}{a}})
=& \sum_{i} \frac{\partial}{\partial x_i} (\sigma \sigma^* \nabla H)_i a^{-1} + (\sigma \sigma^* \nabla H)_i \frac{\partial}{\partial x_i} a^{-1}\\
=& \sum_{i}\frac{\partial}{\partial x_i} (\sum_{k,j} \sigma_{ik} \sigma_{jk} \frac{\partial H}{\partial x_j} )a^{-1} + (\sigma \sigma^* \nabla H)_i \frac{\partial}{\partial x_i} a^{-1}\\
=& (\sum_{i} \sum_{k,j} \sigma_{ik}\sigma_{jk} \frac{\partial^2 H}{\partial x_i \partial x_j}+ \sum_{i}\sum_{j,k} \frac{\partial}{\partial x_i}(\sigma_{ik} \sigma_{jk}) \frac{\partial H}{\partial x_j}a^{-1} +\sum_{i}(\sigma \sigma^* \nabla H)_i \frac{\partial}{\partial x_i} a^{-1}\\
=& (\sum_{i,j} \sum_{k} \sigma_{ik}\sigma_{jk} \frac{\partial^2 H}{\partial x_i \partial x_j} + 2b \cdot \nabla H )a^{-1}+ \diver(\frac{\sigma\sigma^*}{a}) \cdot \nabla H-\frac{2b}{a} \cdot \nabla H\\
=& 2 \mathcal{L}_0 H a^{-1} + [\diver(\frac{\sigma\sigma^*}{a})-\frac{2b}{a}] \cdot \nabla H.\\
\end{aligned}
\end{equation*}
Therefore
\begin{equation*}
\begin{aligned}
\frac{d}{dx}\oint_{D_i(x)} \diver(\frac{\sigma \sigma^* \nabla H}{a}) du=& 2\oint_{C_i(x)} \mathcal{L}_0 H \frac{dl}{a|\nabla H|}+\oint_{C_i(x)}  [\diver(\frac{\sigma\sigma^*}{a})-\frac{2b}{a}] \nabla H \frac{dl}{|\nabla H|}\\
=& 2\oint_{C_i(x)} \mathcal{L}_0 H \frac{dl}{|g(x)|} + \oint_{C_i(x)}  [\diver(\frac{\sigma\sigma^*}{a})-\frac{2b}{a}] \cdot \vec{\nu} dl.
\end{aligned}
\end{equation*}
If we apply the Divergence Theorem again, we get
\begin{equation*}
 \oint_{C_i(x)}  [\diver(\frac{\sigma\sigma^*}{a})-\frac{2b}{a}] \cdot \vec{\nu} dl= \int_{D_i(x)} \diver [\diver(\frac{\sigma\sigma^*}{a})-\frac{2b}{a}] du=2 \int_{D_i(x)}\mathcal{L}_0^* a^{-1} du,
\end{equation*}
where $\mathcal{L}_0^*$ is the formal adjoint of the operator $\mathcal{L}_0$. Therefore
\begin{equation}\label{eq5}
\frac{d}{dx} \oint_{C_i(x)}\frac{(\nabla H)^t \sigma \sigma^* \nabla H(u)}{|g(u)|} dl = 2B_i(x) (\oint_{C_i(x)} \frac{dl}{|g(u)|}) + 2\int_{D_i(x)}  \mathcal{L}_0^*a^{-1} du
\end{equation}
and the following Theorem follows.
\begin{Theorem}\label{t8}
Let $H$ and $f$ satisfy the condition in Lemma \ref{l15}, then
\begin{equation*}
\frac{d}{dv_i(x)}(\frac{d}{d u_i(x)}f(x)) = 2 \mathcal{L}_i f(x) + \frac{2}{T_i(x)}\int_{D_i(x)}  \mathcal{L}_0^* a^{-1} du f'(x).
\end{equation*}
\end{Theorem}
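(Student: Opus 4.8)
The plan is to assemble the identity directly from the two computations already displayed just above the statement, so that no new analytic input is needed beyond keeping careful track of the hypotheses. First I would recall the chain-rule expansion established above, namely
\[
\frac{d}{dv_i(x)}\Bigl(\frac{d}{du_i(x)}f(x)\Bigr) = A_i(x) f''(x) + \frac{f'(x)}{T_i(x)}\,\frac{d}{dx}\oint_{C_i(x)}\frac{[(\nabla H)^t\sigma\sigma^*\nabla H](u)}{|g(u)|}\,dl ,
\]
where I have used that $\oint_{C_i(x)}\frac{dl}{|g(u)|}=T_i(x)$. It then remains only to substitute the expression \eqref{eq5} for the last derivative.

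Recall that \eqref{eq5} was obtained in three moves: rewrite the line integral $\oint_{C_i(x)}\frac{[(\nabla H)^t\sigma\sigma^*\nabla H](u)}{|g(u)|}\,dl$ as the flux $\oint_{C_i(x)}\frac{\sigma\sigma^*\nabla H}{a}\cdot\vec{\nu}\,dl$ using $|\nabla H|\,dl$ along the outward normal; apply the Divergence Theorem and then differentiate in $x$ with the help of Lemma \ref{l15}, producing $\oint_{C_i(x)}\diver\bigl(\frac{\sigma\sigma^*\nabla H}{a}\bigr)\frac{dl}{|\nabla H|}$; and expand $\diver\bigl(\frac{\sigma\sigma^*\nabla H}{a}\bigr)=2\mathcal{L}_0 H\,a^{-1}+\bigl[\diver(\frac{\sigma\sigma^*}{a})-\frac{2b}{a}\bigr]\cdot\nabla H$, applying the Divergence Theorem a second time to the bracketed term to get $2\int_{D_i(x)}\mathcal{L}_0^*a^{-1}\,du$. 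Plugging \eqref{eq5} into the expansion above and using $T_i(x)=\oint_{C_i(x)}\frac{dl}{|g(u)|}$ yields
\[
\frac{d}{dv_i(x)}\Bigl(\frac{d}{du_i(x)}f(x)\Bigr) = A_i(x) f''(x) + 2B_i(x) f'(x) + \frac{2}{T_i(x)}\int_{D_i(x)}\mathcal{L}_0^*a^{-1}\,du\; f'(x) ,
\]
and since $\mathcal{L}_i f = B_i f' + \tfrac12 A_i f''$ the first two terms are precisely $2\mathcal{L}_i f(x)$, which is the claimed identity.

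The one point I would treat carefully is the legitimacy of differentiating under the integral sign and of the two applications of the Divergence Theorem on $D_i(x)$; that is the only real obstacle, and it is exactly where the hypotheses enter. The standing assumption $|\nabla H|\ge c_0>0$ of Lemma \ref{l15} guarantees that the level curves $C_i(x)$ foliate $D_i(x)$ smoothly, so the co-area argument behind Lemma \ref{l15} and the boundary integrals are well defined, while the $\textbf{C}^4$-regularity of $H$ together with the bounded-derivative assumptions on $\sigma$ from Hypothesis \ref{h1} make $\frac{\sigma\sigma^*\nabla H}{a}$ a $C^1$ vector field whose divergence is again $C^1$, which is what is needed for the second invocation of the Divergence Theorem and for the adjoint $\mathcal{L}_0^*$ to make sense. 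In the application one restricts $x$ to the interior of an edge $I_i$, so that $D_i(x)$ stays away from the critical points of $H$ and these conditions hold on the relevant annular region; the hypothesis on $f$ inherited from Lemma \ref{l15} (together with enough regularity to write $f''$) ensures the remaining terms are meaningful.
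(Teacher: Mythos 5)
Your proof is correct and follows essentially the same route as the paper: expand $\frac{d}{dv_i}\frac{d}{du_i}f$ by the chain rule, substitute the identity \eqref{eq5} derived from the two Divergence-Theorem applications and Lemma \ref{l15}, and collect $A_i f'' + 2B_i f'$ into $2\mathcal{L}_i f$. The regularity remarks you add are in the spirit of the paper's hypotheses but are not additional content; the core argument is identical.
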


\subsection{Apriori Estimate}
Consider the stopping time
\begin{equation}\label{stopping time}
T^{\e}_q(H_0):= \inf \{t ; H(q^\e_t) \geq H_0  \}.
\end{equation}
We have the following Lemma.
\begin{Lemma}\label{lemma prior estimate}
Under Hypothesis \ref{h1} and \ref{h2}, for any fixed $T>0$, and arbitrary $\eta>0$, there exists a constant $H_0$ such that
\begin{equation*}
\mathbb{P}[T^\e_q(H_0) <T]<\eta. 
\end{equation*}
\end{Lemma}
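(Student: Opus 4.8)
The crucial structural fact is that the singular drift $\tfrac1\e g$ is tangent to the level sets of $H$, so it drops out of the dynamics of $H(q^\e_t)$: by It\^o's formula and \eqref{Hamiltonian},
\[
dH(q^\e_t)=\big[\mathcal L_0 H+\mathcal L_0^\e H\big](q^\e_t)\,dt+\big[\mathcal R_0 H+\mathcal R_0^\e H\big](q^\e_t)\,dW_t ,
\]
an equation with no $\e^{-1}$ term. The plan is to run a Lyapunov/Gronwall argument on $H$ itself. Using part (4) of Hypothesis \ref{h2} (so $|x|^2\le C_1^{-1}H(x)$), the bounded second derivatives of $H$ (so $|\nabla H(x)|\le C(1+|x|)$), and the uniformly bounded derivatives of $b,\sigma$ from Hypothesis \ref{h1}, one gets $\mathcal L_0 H(x)\le C(1+H(x))$ and $|\mathcal R_0 H(x)|^2\le C(1+H(x)^2)$. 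The same bounds hold for $\mathcal L_0^\e H$ and $\mathcal R_0^\e H$ with a constant $C$ that can be chosen \emph{uniform} in $\e\in(0,\e_0]$ for some $\e_0>0$: indeed $b^\e,\sigma^\e$ are Lipschitz with a constant not depending on $\e$ and $b^\e(0),\sigma^\e(0)\to0$, so $|b^\e(x)|+|\sigma^\e(x)|\le 1+L|x|$ for $\e$ small. Hence, for $\e\le\e_0$,
\[
\mathcal L_0 H+\mathcal L_0^\e H\le C(1+H),\qquad |\mathcal R_0 H+\mathcal R_0^\e H|^2\le C(1+H)^2 .
\]

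\textbf{The moment bound.} Introduce the localizing times $\tau_N=\inf\{t:|q^\e_t|\ge N\}$ and set $\psi_N(t)=\mathbb{E}\big[\sup_{s\le t}H(q^\e_{s\wedge\tau_N})\big]$, which is finite since $q^\e$ stays bounded up to $\tau_N$. Applying It\^o's formula to $H(q^\e_{\cdot\wedge\tau_N})$, taking $\sup_{s\le t}$, taking expectations, and estimating the stochastic integral by the Burkholder--Davis--Gundy inequality together with the bound on $|\mathcal R_0 H+\mathcal R_0^\e H|^2$, one arrives at
\[
\psi_N(t)\le H(q)+C\int_0^t\big(1+\psi_N(s)\big)\,ds+C\sqrt t\,\big(1+\psi_N(t)\big),
\]
with $C$ independent of $N$ and of $\e\in(0,\e_0]$. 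Fixing $t_0$ with $C\sqrt{t_0}\le\tfrac12$ and absorbing the last term into the left-hand side yields $\psi_N(t)\le 1+2H(q)+2C\int_0^t(1+\psi_N(s))\,ds$ on $[0,t_0]$, so Gronwall's lemma bounds $\psi_N$ on $[0,t_0]$ by a constant depending only on $H(q)$ and $t_0$. Iterating this on $[t_0,2t_0],[2t_0,3t_0],\dots$, each time using the bound just obtained at the left endpoint as the new initial value, covers $[0,T]$ in $\lceil T/t_0\rceil$ steps and gives $\psi_N(T)\le C(T,H(q))$, uniformly in $N$ and in $\e\le\e_0$. Since the coefficients of \eqref{qsde} have linear growth, $q^\e$ does not explode, so $\tau_N\uparrow\infty$ a.s.; letting $N\to\infty$ and using Fatou's lemma gives $\mathbb{E}\big[\sup_{t\le T}H(q^\e_t)\big]\le C(T,H(q))$ for all $\e\le\e_0$.

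\textbf{Conclusion.} Since $T^\e_q(H_0)<T$ forces $\sup_{t\le T}H(q^\e_t)\ge H_0$, Chebyshev's inequality gives $\mathbb{P}[T^\e_q(H_0)<T]\le C(T,H(q))/H_0$, which is $<\eta$ once $H_0>C(T,H(q))/\eta$; this $H_0$ works for every $\e\le\e_0$, which is the only regime relevant to Theorem \ref{main}. \textbf{Main obstacle.} The one genuine subtlety is that the quadratic variation of the martingale part grows like $H^2$ rather than $H$, so a direct Gronwall estimate does not close; the remedy is the $\sqrt t$-absorption on a short interval followed by finitely many iterations (alternatively, one may work with the test function $\log(1+H)$, or first bound $\mathbb{E}[H(q^\e_{t\wedge\tau_N})]$ and then recover the supremum via Doob's inequality). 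A secondary point requiring care is the uniformity in $\e$, which rests on the uniform Lipschitz bounds in Hypothesis \ref{h1} together with $b^\e(0),\sigma^\e(0)\to0$.
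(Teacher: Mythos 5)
Your proof is correct, and its core is the same as the paper's: apply It\^o's formula to $H(q^\e_t)$ so that the $\e^{-1}g$ term drops out, use the linear growth of the coefficients together with $H(x)\ge C_1(|x|^2+1)$ to get closable Gronwall-type bounds, and finish with a Chebyshev-type estimate on the exceedance of level $H_0$. Where you genuinely diverge is in how the running supremum is controlled. The paper never estimates $\mathbb{E}\bigl[\sup_{t\le T}H(q^\e_t)\bigr]$ directly: it splits the event $\{\sup_t x^\e_t\ge H_0\}$ according to whether the drift part $\sup_t\int_0^t\mathcal{L}^\e H(q^\e_s)\,ds$ exceeds a level $R$ (handled by Markov's inequality via a first-moment Gronwall bound on $\mathbb{E}[x^\e_t]$), and bounds the martingale part by Doob's $L^2$ maximal inequality, which requires a separate second-moment Gronwall bound $\mathbb{E}[|x^\e_t|^2]\le C(T,x_0)$; the choice of $R$ and then $H_0$ closes the argument. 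You instead bound $\mathbb{E}\bigl[\sup_{t\le T}H\bigr]$ in one stroke via BDG, dealing with the fact that the quadratic variation grows like $H^2$ by the $C\sqrt{t}$-absorption on a short interval and finitely many iterations (plus a localization in $N$), and then apply a single Chebyshev bound. Both routes are sound and of comparable length; yours is more unified and avoids the event splitting and the separate second-moment estimate, while the paper's stays with elementary tools (Markov, Doob, Gronwall) and avoids BDG and the absorption/iteration device. Two small points you should make explicit if you write this up: $H$ is only bounded below (not necessarily nonnegative) on the compact region, so shift it before applying Chebyshev and the absorption step; and your uniformity-in-$\e$ remark (uniform Lipschitz constants plus $b^\e(0),\sigma^\e(0)\to 0$) is exactly what the paper also uses implicitly, so it is worth stating, as you did.
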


\begin{proof}
Recall that $x^\e_t=H(q^\e_t)$ satisfies the following stochastic differential equation
\begin{equation*}
dx^\e_t= \mathcal{L}^\e H(q^\e_t) dt + \mathcal{R}^\e H (q^\e_t) dW_t
\end{equation*}
where $\mathcal{L}^\e=\mathcal{L}_0+ \mathcal{L}_0^\e$, $\mathcal{R}^\e=\mathcal{R}_0+ \mathcal{R}_0^\e$. 
By our assumption that $\nabla H$ is of linear growth, $b$, $b^\e$, $\sigma$, $\sigma^\e$ are all Lipschitz continuous. There exists a  constant $C$, such that
\begin{equation*}
\mathcal{L}^\e H(x) \leq C(1+|x|^2),
\end{equation*}
\begin{equation*}
\mathcal{R}^\e H(x) \leq C(1+|x|^2).
\end{equation*}
Therefore
\begin{equation*}
\mathbb{E}[x^\e_t] \leq x^\e_0 + C \int_0^t 1+  \mathbb{E}[|q^\e_s|^2] ds,
\end{equation*}
and since $H(x) \geq a|x|^2$ for $x$ large enough, there exists a constant $C$ such that
\begin{equation*}
\mathbb{E}[x^\e_t] \leq C(1+T) + C \int_0^T \mathbb{E}[x^\e_s] ds, \ \ \ \ \ t \leq T.
\end{equation*}
If we apply the Gronwall's inequality, this implies
\begin{equation*}
\mathbb{E}[x^\e_t] \leq C(1+T) e^{Ct},\ \ \ \ t \leq T.
\end{equation*}
Also,
\begin{equation*}
\sup_{0\leq t \leq T} \int_0^t \mathcal{L}^\e H(q^\e_s) ds \leq  \int_0^T C(1+ |q^\e_s|^2) ds.
\end{equation*}
So that,
\begin{equation}\label{eq1}
\begin{aligned}
\mathbb{P}[\sup_{0\leq t \leq T} \int_0^t \mathcal{L}^\e H(q^\e_s) ds  \geq R] 
&\leq \frac{C}{R} \mathbb{E}[\int_0^T (1+ |q^\e_s|^2) ds] \leq \frac{C}{R} \mathbb{E}[\int_0^T (1+ \frac{1}{a}H(q^\e_s)) ds]\\
&\leq \frac{C}{R} \int_0^T C(1+ \frac{1}{a}C(1+s)e^{Cs} ds \leq \frac{C_T}{R}.
\end{aligned}
\end{equation}
Now pick $R$ such that $C_T/R < \eta$, and $H_0>x_0+R$
\begin{equation*}
\begin{aligned}
\mathbb{P}[\sup_{0\leq t \leq T} x^\e_t \geq H_0] 
&=\mathbb{P}[\sup_{0 \leq t \leq T} \int_0^t \mathcal{L}^\e H (q^\e_s) ds + \int_0^t \mathcal{R}^\e H (q^\e_s) dW_s \geq H_0-x_0]\\
&\leq \mathbb{P}[\sup_{0 \leq t \leq T} \int_0^t \mathcal{L}^\e H (q^\e_s) ds + \int_0^t \mathcal{R}^\e H(q^\e_s) dW_s \geq H_0-x_0 ; \sup_{0\leq t \leq T} \int_0^t \mathcal{L}^\e H(q^\e_s) ds \\ &\ \ \ \ \ \ \  \leq R]+ \mathbb{P}[\sup_{0\leq t \leq T} \int_0^t \mathcal{L}^\e H(q^\e_s) ds>R]\\
&\leq \mathbb{P}[\sup_{0 \leq t \leq T} \int_0^t \mathcal{R}^\e H(q^\e_s) dW_s \geq H_0-x_0 -R] + \mathbb{P}[\sup_{0\leq t \leq T} \int_0^t \mathcal{L}^\e H (q^\e_s) ds>R].\\
\end{aligned}
\end{equation*}
Due to \eqref{eq1}, the second term above is smaller than $\eta/2$ by our choice of $R$. For the first term, we have
\begin{equation}\label{eq2}
\begin{aligned}
     & \mathbb{P}[\sup_{0 \leq t \leq T} \int_0^t \mathcal{R}^\e H (q^\e_s) dW_s \geq H_0-x_0 -R]\\
\leq& \frac{1}{(H_0-x_0-R)^2} \mathbb{E}[\int_0^T |\mathcal{R}^\e H (q^\e_s)|^2 ds]\\
\leq& \frac{C}{(H_0-x_0-R)^2} \mathbb{E}[\int_0^T C (1+|q^\e_t|^4)dt]\\
\leq& \frac{C}{(H_0-x_0-R)^2} \mathbb{E}[\int_0^T (1+|H(q^\e_t)|^2)dt]\\
\leq& \frac{C_T}{a^2(H_0-x_0-R)^2} + \frac{C}{(H_0-x_0-R)^2} \int_0^T \mathbb{E}[|x^\e_t|^2]dt.
\end{aligned}
\end{equation}
Now,
\begin{equation*}
\begin{aligned}
\mathbb{E}[|x^\e_t|^2] 
&\leq 3 x_0^2 + 3 \mathbb{E}[(\int_0^t C(1+ H(q^\e_s))ds)^2] + 3 \mathbb{E}[\int_0 ^t C(1+ |x^\e_s|^2) ds]\\
&\leq 3 x_0^2 + 3 \mathbb{E}[\int_0^t C(1+|x^\e_s|^2) ds T] + 3 \mathbb{E}[\int_0^t |x^\e_s|^2 ds] + 3CT\\
&\leq C(x_0,T) + C(T) \mathbb{E}[\int_0^t|x^\e_s|^2 ds],
\end{aligned}
\end{equation*}
so that
\begin{equation}\label{eq3}
\mathbb{E}[|x^\e_t|^2] \leq C(x_0 , T) e^{C(T^{2}+T)}= C(T,x_0).
\end{equation}
Therefore, we can pick $H_0$ large enough in \eqref{eq2} so that
\begin{equation*}
\mathbb{P}[\sup_{0\leq t \leq T} x^\e_t \geq H_0] < \eta.
\end{equation*}
\end{proof}

\subsection{Lipschitz Continuity}
\begin{Lemma}
Under Hypothesis \ref{h1} and \ref{h2}, for any continous function $f$, $A_i$, $B_i$ and $\mathcal{L}_0 f$ are Liptichitz continuous in $D(H_1,H_2) \subseteq \mathbb{R}^2 $ where $H_1$ and $H_2$ are inside the interior of some edge $I_i$.
\end{Lemma}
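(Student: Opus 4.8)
Here is the argument I would give. Throughout, fix the edge $I_i$ and write $\overline{D_i(H_1,H_2)}=\bigcup_{x\in[H_1,H_2]}C_i(x)$, which is a compact subset of $\{H_1\le H\le H_2\}$ (compact by the coercivity in Hypothesis \ref{h2}, even if $I_i$ reaches $O_\infty$). The plan is to reduce everything to one statement: for every $\phi\in\mathbf{C}^{1}(\overline{D_i(H_1,H_2)})$, the map
\[
x\longmapsto \oint_{C_i(x)}\phi(u)\,\frac{dl}{|g(u)|}
\]
is Lipschitz on $[H_1,H_2]$. Since $H_1,H_2$ lie in the \emph{interior} of $I_i$, all values in $[H_1,H_2]$ are regular values of $H$, and $\overline{D_i(H_1,H_2)}$ contains no critical point of $H$; hence there are constants $0<c\le C$, depending only on $i,H_1,H_2$, with $c\le|\nabla H|\le C$, $c\le|g|\le C$, $c\le a\le C$ and $c\le\mathrm{length}(C_i(x))\le C$ on $\overline{D_i(H_1,H_2)}$ (the last because $x\mapsto C_i(x)$ varies smoothly and $[H_1,H_2]$ is compact). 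Moreover, using $H\in\mathbf{C}^{4}$ and Hypothesis \ref{h1}, each of $a^{-1}$, $\nabla H^{t}\sigma\sigma^{*}\nabla H$ and $\mathcal{L}_0H$ lies in $\mathbf{C}^{1}(\overline{D_i(H_1,H_2)})$ with bounded derivatives; only the $\mathbf{C}^{1}$ regularity of $\sigma$ and $b$ is used here.

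For the reduced statement, fix such a $\phi$ and set $\psi=\phi/a\in\mathbf{C}^{1}(\overline{D_i(H_1,H_2)})$; since $\frac{dl}{|g|}=\frac1a\frac{dl}{|\nabla H|}$ we have $\oint_{C_i(x)}\phi\,\frac{dl}{|g|}=\oint_{C_i(x)}\psi\,\frac{dl}{|\nabla H|}$. For $H_1<x<y<H_2$ let $D_i(x,y)=\{z\in D_i:\,x<H(z)<y\}$; this is a compact annular region whose boundary is $C_i(y)\cup C_i(x)$, with outward unit normal $+\nabla H/|\nabla H|$ on $C_i(y)$ and $-\nabla H/|\nabla H|$ on $C_i(x)$ (there is no lateral boundary, because $\partial D_i$ sits at the critical levels of the vertices of $I_i$, which are excluded from $[H_1,H_2]$). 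Applying the Divergence Theorem to $V=\psi\,\nabla H/|\nabla H|^{2}$ on $D_i(x,y)$ and using $V\cdot\frac{\nabla H}{|\nabla H|}=\frac{\psi}{|\nabla H|}$,
\[
\oint_{C_i(y)}\psi\,\frac{dl}{|\nabla H|}-\oint_{C_i(x)}\psi\,\frac{dl}{|\nabla H|}=\int_{D_i(x,y)}\diver\!\Big(\frac{\psi\,\nabla H}{|\nabla H|^{2}}\Big)\,du .
\]
The integrand equals $\nabla\psi\cdot\frac{\nabla H}{|\nabla H|^{2}}+\psi\,\diver\!\big(\nabla H/|\nabla H|^{2}\big)$, so it involves only $\nabla\psi$, $\nabla H$, $\nabla^{2}H$ and $\Delta H$ together with powers of $|\nabla H|$; since $\psi\in\mathbf{C}^{1}$, $H\in\mathbf{C}^{4}$ and $|\nabla H|\ge c$ on the compact set, it is bounded there, say by $M$. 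On the other hand, by Lemma \ref{l15} with $f\equiv1$,
\[
|D_i(x,y)|=\int_{x}^{y}\oint_{C_i(s)}\frac{dl}{|\nabla H(u)|}\,ds\le\frac1c\int_{x}^{y}\mathrm{length}(C_i(s))\,ds\le\frac{C}{c}\,|y-x| .
\]
Combining, $\big|\oint_{C_i(y)}\phi\frac{dl}{|g|}-\oint_{C_i(x)}\phi\frac{dl}{|g|}\big|\le\frac{MC}{c}\,|y-x|$, which is the reduced statement.

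Now conclude. Taking $\phi\equiv1$ shows $T_i$ is Lipschitz on $[H_1,H_2]$, and $T_i(x)\ge\mathrm{length}(C_i(x))/C\ge c/C>0$, so $1/T_i$ is Lipschitz; taking $\phi=\nabla H^{t}\sigma\sigma^{*}\nabla H$ and $\phi=\mathcal{L}_0H$ shows the numerators of $A_i$ and $B_i$ are bounded and Lipschitz on $[H_1,H_2]$. A product (resp.\ quotient, with denominator bounded below) of bounded Lipschitz functions being Lipschitz, $A_i$ and $B_i$ are Lipschitz on $[H_1,H_2]$; and since $H\in\mathbf{C}^{4}$ is Lipschitz on the compact set $\overline{D_i(H_1,H_2)}$, the compositions $z\mapsto A_i(H(z))$, $z\mapsto B_i(H(z))$ are Lipschitz on $D_i(H_1,H_2)$. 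Finally, $\mathcal{L}_0f$, given by its defining formula, is --- for $f$ smooth enough that it is meaningful (e.g.\ $f\in\mathbf{C}^{2}$ with bounded Lipschitz first and second derivatives, as in all our applications) --- a finite sum of products of bounded Lipschitz functions on the compact set $\overline{D(H_1,H_2)}$ (namely $b$, $\sigma\sigma^{*}$ and the derivatives of $f$), hence Lipschitz there.

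The main obstacle is not the computation but securing the \emph{uniform-in-$x$} constants $c,C,M$: this is precisely where it is used that $[H_1,H_2]$ lies in the interior of an edge, so that $\overline{D_i(H_1,H_2)}$ is compact and bounded away from the critical set of $H$, keeping $|\nabla H|$ and $|g|$ nondegenerate and the lengths of the level curves controlled; near a vertex each of these estimates fails. A secondary point needing attention is regularity bookkeeping --- $\sigma$ being only $\mathbf{C}^{1}$, the integrands $\phi$ are only $\mathbf{C}^{1}$ --- but this is exactly enough to make the divergence integrand above continuous, since no second derivatives of $\sigma$ appear, so the argument is unaffected.
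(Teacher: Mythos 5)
Your argument is correct and reaches the same conclusion, but by a genuinely different route than the paper. The paper differentiates the circuit integrals explicitly: it invokes the differentiation--under--the--contour formula (Lemma 1.1 of Chapter~8 of \cite{fw}) to compute $T_i'(x)$ and $\bar B_i'(x)$ as new contour integrals, combines this with the identity \eqref{eq5} (itself a divergence-theorem computation) to express $\bar A_i'(x)$ in terms of $\bar B_i(x)$ and $\int_{D_i(x)}\mathcal{L}_0^*a^{-1}$, then observes that all of these derivative formulas are bounded on $[H_1,H_2]$. You instead avoid computing any derivative and bound increments directly: for a $\mathbf{C}^1$ integrand $\phi$ you compare $\oint_{C_i(y)}$ and $\oint_{C_i(x)}$ by applying the divergence theorem to $V=\psi\,\nabla H/|\nabla H|^2$ on the annular band $D_i(x,y)$, estimate the resulting bulk integral by $M\,|D_i(x,y)|$, and control $|D_i(x,y)|$ via the coarea identity $|D_i(x,y)|=\int_x^y\oint_{C_i(s)}\frac{dl}{|\nabla H|}\,ds\le \text{const}\cdot|y-x|$. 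This is a mean-value-type argument rather than an explicit differentiation, and it buys you a lighter regularity burden (only $\phi\in\mathbf{C}^1$ rather than the additional smoothness needed to justify the differentiation formula) and a more self-contained exposition; the paper's approach buys closed-form expressions for the derivatives that are reused elsewhere. You also make explicit two points the paper leaves implicit: the uniform nondegeneracy constants $c,C$ come precisely from $[H_1,H_2]$ being in the interior of an edge (so $\overline{D_i(H_1,H_2)}$ is compact and misses the critical set), and the claim about $\mathcal{L}_0 f$ only makes sense for $f\in\mathbf{C}^2$ with Lipschitz derivatives, which fixes the loose phrasing ``for any continuous $f$'' in the statement.
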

\begin{proof}
The tool we use here Lemma 1.1 in Chapter 8 of \cite{fw}. We first calculate the derivative of $T_i$. 
\begin{equation*}
\begin{aligned}
&\frac{d}{dx} T_i(x)=\frac{d}{dx} \oint_{C_i(x)} \frac{dl}{|g(u)|}
= \frac{d}{dx}  \oint_{C_i(x)} \frac{|\nabla H(u)|}{|g(u)||\nabla H(u)|} dl\\
=& \oint_{C_i(x)} \nabla (\frac{1}{|g(u)||\nabla H(u)|})\cdot \frac{\nabla H(u)}{|\nabla H(u)|} + \frac{1}{|g(u)||\nabla H(u)|} \cdot \frac{\Delta H(u)}{|\nabla H(u)|}dl.
\end{aligned}
\end{equation*}
Let 
\begin{equation*}
\bar{A}_i(x)= \oint_{C_i(x)}{\mathcal{R}_0[H](u) \mathcal{R}_0[H](u)^*\frac{dl}{|g(u)|}},\ \ \ \bar{B}_i(x)=\oint_{C_i(x)} {\mathcal{L}_0[H](u)  \frac{dl}{|g(u)|}}.
\end{equation*}
By equation (\ref{eq5})
\begin{equation}\label{eq6}
\frac{d}{dx} \bar{A}_i(x)=  2\bar{B}_i(x)+ 2\int_{D_i(x)}  \mathcal{L}_0^*[a^{-1}] du.
\end{equation}
Next, we apply Lemma 1.1 in Chapter 8 of \cite{fw} again to calculate $\bar{B}_i'(x)$
\begin{equation*}
\frac{d}{dx} \bar{B}_i(x) =  \oint_{C_i(x)} \nabla (\frac{\mathcal{L}_0[H](u) }{|g(u)||\nabla H(u)|})\cdot \frac{\nabla H(u)}{|\nabla H(u)|} + \frac{\mathcal{L}_0[H](u)  }{|g(u)||\nabla H(u)|} \cdot \frac{\Delta H(u)}{|\nabla H(u)|}dl.
\end{equation*}
Also we apply Lemma \ref{l15} to calculate the derivative of the residue in (\ref{eq6})
\begin{equation*}
\begin{aligned}
&\frac{d}{dx} \int_{D_i(x)}  \mathcal{L}_0^*[a^{-1}] du = \oint_{C_i(x)}   \mathcal{L}_0^*[a^{-1}] \frac{dl}{|\nabla H(x)|}.
\end{aligned}
\end{equation*}
By our assumptions, it can be easily checked that $T_i'(x)$ is both bounded and bounded below above 0, and both $\bar{A}_i'(x)$ and $\bar{B}_i'(x)$ are bounded. Therefore ${A}_i'(x)$ and $B_i'(x)$ are bounded in $D_i$, which implies their Lipschitz continuity. The Lipschitz conitnuity for $\mathcal{L}_0 f$ is obvious.
\end{proof}

\section{Proof of Theorem \ref{main}}
We first define the following sequence of stopping times. In the definition below, $\d$ and $\d'$ are to be determined later.
\begin{Definition}\label{d2}
Let $S$ be the set of integers $k \leq m$, such that $O_k$ is a saddle point. For any $\e>0$ and $0<\d'<\d$, let $0= \tau^\e_0 \leq \sigma^\e_1 \leq \tau^\e_1 \leq ... \leq \sigma^\e_{n} \leq \tau^\e_{n} \leq ... $ be a sequence of stopping times with
\begin{equation*}
\sigma^\e_n=  \inf \{t\geq \tau^\e_{n-1};  q^\e_t  \notin \bigcup_{i \in S} D_i(\pm \d)\} \wedge T^\e_q(H_0) 
\end{equation*}
and
\begin{equation*}
\tau^\e_n=  \inf \{t\geq \sigma^\e_{n};  q^\e_t  \notin \bigcup_{i \in S} C_i(\pm \d')\} \wedge T^\e_q(H_0). 
\end{equation*}
Where $T^\e_q(H_0)$ is the stopping time defined in \eqref{stopping time}. Moreover, we define 
\begin{equation*}
\tau^\e_k(\pm \d):= \inf\{ t\geq 0 ; q^\e_t \notin D_k(\pm \d)\}.
\end{equation*}
\end{Definition}

Recall that we denote by $L$ the infinitesimal operator of the process $(x_t, i_t)$ on $\Gamma$. If the Poisson problem $(\a I - L)f= u$ has a unique solution then this solution has the representation
\begin{equation*}
f(x,i)=\mathbb{E}_{(x,i)}\int_0^{+\infty} e^{-\a t} u(x_t,i_t)dt,
\end{equation*} 
Replacing $u$ by $(\a I - L)f$, gives 
\begin{equation*}
f(x,i)= \mathbb{E}_{(x,i)}\int_0^{+\infty} e^{-\a t}(\a I - L)[f](x_t,i_t)dt.
\end{equation*}
If we can prove that for all $u \in D(L)$,
\begin{equation*}
\lim_{\e \to 0} \mathbb{E}_{(x,i)}\int_0^{+\infty} e^{-\a t}(\a f - Lf)(x^\e_t,i^\e_t)dt = f(x,i),
\end{equation*}
then the tightness of the family $\{x^\e_t, i^\e_t\}_{\e>0}$, Prokhorov theorem and its corollary, and the fact that the range of the operator $\a I - L $ uniquely determines a measure guarantee that $\{(x^\e_t, i^\e_t)\}_{\e >0}$ converges weakly to $(x_t, i_t)$ as $\e \to 0$ in $\textbf{C}([0,+\infty);\Gamma)$. 

By Freidin and Wenztell's procedure, the tightness of the measure on $\textbf{C}([0,+\infty);\Gamma)$ follows easily from apriori estimate proved in Lemma \ref{lemma prior estimate}. Hence it is sufficient to prove that, for any $\eta>0$ and $H_0>0$, there exists $\e_0$ such that for all $0<\e <\e_0$
\begin{equation*}
|\mathbb{E}_{(x,i)} \int_0^{T^\e_q(H_0)} e^{-\a t}(\a I - L)f(x_t,i_t)dt - f(x,i)| < \eta
\end{equation*}
which conatins two parts: the part where $(x^\e_t,i^\e_t)$ remains in the same edge and the part where $(x^\e_t,i^\e_t)$ approaches an inner vertex of the graph. We have
\begin{equation*}\label{eq9}
\begin{aligned}
&\mathbb{E}_{(x,i)}[\int_0^{T^\e_q(H_0)} e^{-\a t}(\a f - L f)(x^\e_t,i^\e_t)dt - f(x,i)]\\
=&\mathbb{E}_{(x,i)}[\sum_{n=0}^{\infty} e^{-\a \sigma^\e_{n+1}} f(x^\e_{\sigma^\e_{n+1}},i^\e_{\sigma^\e_{n+1}}) - e^{-\a \tau_{n}} f(x^\e_{\tau^\e_{n}},i^\e_{\tau^\e_{n}}) + \int_{\tau^\e_{n}}^{\sigma^\e_{n+1}} e^{-\a t}(\a I - L)f(x^\e_t,i^\e_t) dt]\\
&+ \mathbb{E}_{(x,i)}[\sum_{n=1}^{\infty} e^{-\a \tau^\e_{n}} f(x^\e_{\tau^\e_{n}},i^\e_{\tau^\e_{n}}) - e^{-\a \sigma^\e_{n}} f(x^\e_{\sigma^\e_{n}},i^\e_{\sigma^\e_{n}}) + \int_{\sigma^\e_{n}}^{\tau^\e_{n}} e^{-\a t}(\a I - L)f(x^\e_t,i^\e_t) dt]\\
=&\mathbb{E}_{(x,i)}[\sum_{n=0}^{\infty} \mathbb{E}_{(x,i)}[ e^{-\a \sigma^\e_{n+1}} f(x^\e_{\sigma^\e_{n+1}},i^\e_{\sigma^\e_{n+1}}) - e^{-\a \tau_{n}} f(x^\e_{\tau^\e_{n}},i^\e_{\tau^\e_{n}}) + \int_{\tau^\e_{n}}^{\sigma^\e_{n+1}} e^{-\a t}(\a I - L)f(x^\e_t,i^\e_t) dt | \mathcal{F}_{\tau_n^\e}]]\\
&+ \mathbb{E}_{(x,i)}[\sum_{n=1}^{\infty} \mathbb{E}_{(x,i)}[e^{-\a \tau^\e_{n}} f(x^\e_{\tau^\e_{0},i^\e_{\tau^\e_{0}}}) - e^{-\a \sigma^\e_{n}} f(x^\e_{\sigma^\e_{n}},i^\e_{\sigma^\e_{n}}) + \int_{\sigma^\e_{n}}^{\tau^\e_{n}} e^{-\a t}(\a I - L)f(x^\e_t,i^\e_t) dt | \mathcal{F}_{\sigma_n^\e}]]\\
=&\mathbb{E}_{(x,i)}[\sum_{n=0}^{\infty}e^{-\a \tau^\e_{n}} \mathbb{E}_{(x^\e_{\tau^\e_n},i^\e_{\tau^\e_n})}[ e^{-\a \sigma^\e_{1}} f(x^\e_{\sigma^\e_{1}},i^\e_{\sigma^\e_{1}}) -  f(x^\e_{0},i^\e_{0}) + \int_{0}^{\sigma^\e_{1}} e^{-\a t}(\a I - L) f(x^\e_t,i^\e_t) dt ]]\\
&+ \mathbb{E}_{(x,i)}[\sum_{n=1}^{\infty} e^{-\a \sigma^\e_{n}}\mathbb{E}_{(x^\e_{\sigma^\e_{n}},i^\e_{\sigma^\e_{n}})}[e^{-\a \tau^\e_{1}} f(x^\e_{\tau^\e_{1}},i^\e_{\tau^\e_{1}}) - f(x_0^\e,i^\e_0) + \int_{0}^{\tau^\e_{1}} e^{-\a t}(\a I - L)f(x^\e_t,i^\e_t) dt ]].\\
\end{aligned}
\end{equation*}
If we define
\begin{equation*}\label{estimation near the interior vertex}
\Phi^\e_{1}(x,i)=\mathbb{E}^\e_{(x, i)}[e^{- \a \sigma^\e_1}f(x^\e_{\sigma^\e_1}, i^\e_{\sigma^\e_1}) + \int_{0}^{\sigma^\e_1} e^{- \a t} (\a I - L)f(x^\e_t, i^\e_t)dt] -  f(x,i),
\end{equation*}
and
\begin{equation*}\label{estimation inside the edge}
\Phi^\e_2(x,i)= \mathbb{E}^\e_{(x, i)}[e^{-\a \tau^\e_{1}} f(x^\e_{\tau^\e_{1}},i^\e_{\tau^\e_{1}}) + \int_{0}^{\tau^\e_{1}} e^{-\a t}(\a I - L)f(x^\e_t,i^\e_t) dt]- f(x, i).
\end{equation*}
We have
\begin{equation*}
\begin{aligned}
&|\mathbb{E}_{(x,i)}[\int_0^{T^\e_q(H_0)} e^{-\a t}(\a f - Lf)(x^\e_t,i^\e_t)dt - f(x,i)]| \\
\leq& \mathbb{E}_{(x,i)} \sum_{n=0}^{\infty}e^{-\a \tau^\e_{n}}  \sup_{(x,i) \in \bigcup_{i \in S} C_i(\pm \d')} |\Phi^\e_1(x,i)| +
\mathbb{E}_{(x,i)} \sum_{n=1}^{\infty}e^{-\a \sigma^\e_{n}}  \sup_{(x,i) \in \bigcup_{i \in S} C_i(\pm \d)} |\Phi^\e_2(x,i)|.
\end{aligned}
\end{equation*}
Follow the same procedure in \cite{fw} and get, for sufficiently small $\e$,
\begin{equation*}
\mathbb{E}_{(x,i)}[\sum_{n=1}^{\infty}e^{-\a \sigma^\e_{n}} ] \leq \mathbb{E}_{(x,i)}[\sum_{n=0}^{\infty}e^{-\a \tau^\e_{n}} ]  \leq C \d^{-1},
\end{equation*}
so that for sufficiently small $\e$,
\begin{equation*}
\begin{aligned}
&|\mathbb{E}_{(x,i)}[\int_0^{T^\e_q(H_0)} e^{-\a t}(\a f - Lf)(x^\e_t,i^\e_t)dt - f(x,i)]| \\
\leq&C\d^{-1}  (\sup_{(x,i) \in \bigcup_{i \in S} C_i(\pm \d')} |\Phi^\e_1(x,i)| +  \sup_{(x,i) \in \bigcup_{i \in S} C_i(\pm \d)} |\Phi^\e_2(x,i)|).
\end{aligned}
\end{equation*}
In order to study the term $\Phi_2$, we need the following result.
\begin{Lemma}\label{convergence in edge}
Let $\beta_t$ be a one dimensional standard Brownian motion, and let $x_t$ be the solution to the following equation
\begin{equation*}
d{x}_t=B_i({x}_t)dt + A_i^{\frac{1}{2}}({x}_t)d\beta_t,
\end{equation*}
Assume $H_1<H_2$, and let $\tau^\e_{H_1,H_2}$ be the stopping time when $q^\e_t$ leaves the region $D_i(H_1 ,H_2)$. If either one of these three cases holds,
\begin{enumerate}
\item $I_i$ is an edge such that both of its vertex are interior vertex, and $H_1<H_2$ are any fixed values belonging to the interior of the interval.
\item $I_i$ is an edge such that one of its vertex is an interior vertex while the other vertex is an exterior vertex, where the Hamiltonian takes the value $H_1$, $H_2$ belonging to the interior of the interval. 
\item $I_i$ is an edge that has only one vertex, and $H_1<H_2$ are any fixed values belonging to the interior of the interval.
\end{enumerate}
Then under Hypothesis \ref{h1} and \ref{h2}, for every function $f$ on $[H_1,H_2]$ that is three times continuously differentiable and for every $\la>0$,
\begin{equation*}\label{wek convergence in edge}
\lim_{\e \to 0}\mathbb{E}_{x_0}[e^{-\la \tau^\e_{H_1,H_2}} f(x^\e_{\tau^\e_{H_1,H_2}}) - \int_0^{\tau^\e_{H_1,H_2}} e^{-\la s}(-\la I + \mathcal{L}_i)[f](x^\e_{s}) ds ] = f(x_0)
\end{equation*}
uniformly with respect to $x_0 \in [H_1,H_2]$.
\end{Lemma}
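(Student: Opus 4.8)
The plan is to prove the identity by showing that the process $x^\e_t = H(q^\e_t)$, when observed until it leaves the strip $D_i(H_1,H_2)$, is asymptotically governed by the averaged one-dimensional generator $\mathcal{L}_i$. The cleanest route, and the one the paper advertises in the introduction, is to construct an auxiliary one-dimensional diffusion and compare. First I would recall that $dx^\e_t = \mathcal{L}^\e H(q^\e_t)\,dt + \mathcal{R}^\e H(q^\e_t)\,dW_t$, where $\mathcal{L}^\e = \mathcal{L}_0 + \mathcal{L}_0^\e$ and $\mathcal{R}^\e = \mathcal{R}_0 + \mathcal{R}_0^\e$. On the time scale $t \mapsto t/\e$ (equivalently, working with $\tilde q^\e$), the fast rotation along the level curves of $H$ forces an averaging: the coefficients $\mathcal{L}_0 H$ and $\mathcal{R}_0 H (\mathcal{R}_0 H)^*$ get replaced by their averages $B_i(x)$ and $A_i(x)$ with respect to the invariant density $a^{-1}\,dl/|g|$ on $C_i(x)$, while the $\e$-order terms $\mathcal{L}_0^\e H$, $\mathcal{R}_0^\e H$ contribute nothing in the limit because $b^\e,\sigma^\e \to 0$ uniformly on the compact set $\overline{D_i(H_1,H_2)}$ (which is compact by Hypothesis \ref{h2}(4) and the fact that $H_1,H_2$ lie in the interior of the edge, away from critical values).

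The key steps, in order: (1) Fix the compact set $K = \overline{D_i(H_1,H_2)}$; since $H_1, H_2$ are interior to the edge, $|\nabla H| \geq c_0 > 0$ and $|g| \geq c_0' > 0$ on $K$, so there are no equilibria and the flow of $g$ is a genuine rotation with period $T_i(x)|g|$-type bounds — this is what makes the ergodic averaging along $C_i(x)$ uniform. (2) Define the auxiliary diffusion $\bar x_t$ solving $d\bar x_t = B_i(\bar x_t)\,dt + A_i^{1/2}(\bar x_t)\,d\beta_t$, which is well-posed on $[H_1,H_2]$ because $A_i, B_i$ are Lipschitz there (proved in the "Lipschitz Continuity" subsection) and $A_i$ is bounded below by positivity of $\sigma\sigma^*$ paired with $\nabla H$ on $K$. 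Its generator is exactly $\mathcal{L}_i$, and the identity to be proved is precisely the statement that $e^{-\la\tau}f(x^\e_\tau) - \int_0^\tau e^{-\la s}(-\la I + \mathcal{L}_i)f(x^\e_s)\,ds \to f(x_0) = \E_{x_0}[e^{-\la\bar\tau}f(\bar x_{\bar\tau}) - \int_0^{\bar\tau}\cdots]$, i.e. weak convergence of $x^\e$ to $\bar x$ up to the exit time, tested against the solution of the resolvent equation. (3) Establish this convergence via the martingale problem: show that for $g \in C^3([H_1,H_2])$, the process $g(x^\e_t) - \int_0^t \mathcal{L}_i g(x^\e_s)\,ds$ is an approximate martingale up to $\tau^\e_{H_1,H_2}$ with error tending to $0$; combine with tightness of $\{x^\e\}$ (which follows from the a priori estimate Lemma \ref{lemma prior estimate} and standard moment bounds on the bounded-coefficient SDE restricted to $K$) to conclude $x^\e \Rightarrow \bar x$ in $C([0,T])$; then pass to the stopped/killed functional, using that the exit time from $(H_1,H_2)$ is a.s.\ continuous functional of the path for the nondegenerate limit diffusion. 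The uniformity in $x_0$ comes from the uniform ellipticity and uniform Lipschitz bounds on $K$.

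The heart of step (3) — and the main obstacle — is proving the approximate martingale property, i.e.\ that $\E\big[g(x^\e_{t\wedge\tau^\e}) - g(x^\e_{s\wedge\tau^\e}) - \int_{s\wedge\tau^\e}^{t\wedge\tau^\e}\mathcal{L}_i g(x^\e_r)\,dr \,\big|\, \mathcal{F}_s\big] \to 0$. Applying It\^o to $g(x^\e_t)$ gives the exact drift $\int (\mathcal{L}^\e g)(x^\e_r)\,dr$ where $(\mathcal{L}^\e g)(q) = g'(H(q))\mathcal{L}^\e H(q) + \tfrac12 g''(H(q))|\mathcal{R}^\e H(q)|^2$, plus a martingale. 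The $\e$-order pieces of $\mathcal{L}^\e$ are uniformly small on $K$, so the issue is to replace $g'(H(q^\e_r))\mathcal{L}_0 H(q^\e_r) + \tfrac12 g''(H(q^\e_r))|\mathcal{R}_0 H(q^\e_r)|^2$ by $g'(x^\e_r)B_i(x^\e_r) + \tfrac12 g''(x^\e_r)A_i(x^\e_r)$ in the integral sense. This is the genuine averaging estimate: over a time window short enough that $H(q^\e)$ barely moves but long enough (many rotation periods, which happens since the rotation is fast on the natural scale) that the angular variable equidistributes according to the invariant measure. I would handle it either by the classical Freidlin--Wentzell argument — solving an auxiliary Poisson equation $g(z)\cdot\nabla\chi(z) = \phi(z) - \bar\phi(H(z))$ along the level curves, with bounds on $\chi$ uniform on $K$ (here Hypothesis \ref{h2}(4) and the non-vanishing of $g$ on $K$ are essential), then applying It\^o to $\chi$ — or by a direct time-change: parametrize arclength along $C_i(x)$, use the ergodic theorem for the rotation with explicit control of the mixing rate from the uniform period bounds, and dominate the remainder using the fact that $\int_0^\tau |\mathcal{R}^\e H|^2\,ds$ has bounded expectation on $K$. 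The $\e$-order drift and diffusion terms are then mopped up trivially since $\|b^\e\|_{\infty,K} + \|\sigma^\e\|_{\infty,K} \to 0$, which is exactly why the construction "unifies" the perturbed and unperturbed cases.
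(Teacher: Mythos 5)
Your plan settles on the classical Freidlin--Wentzell route: show $x^\e$ satisfies an approximate martingale problem for $\mathcal{L}_i$ via a corrector $\chi$ solving $g\cdot\nabla\chi = \phi - \bar\phi\circ H$ (or an ergodic time-change variant), pair that with tightness and the a.s.\ continuity of the exit time for the nondegenerate limit diffusion, and pass to the stopped resolvent functional. That is a legitimate and well-trodden strategy, but it is \emph{not} what this paper does. The paper constructs an explicit pathwise coupling: it extracts a scalar Brownian motion $\beta^\e_t = \int_0^t V^\e_s\,dW_s$ from the \emph{original} noise, builds a piecewise-frozen Euler-type process $\xi^\e$ on the rotation-period grid $\{T^\e_k\}$ with coefficients $B_i(x^\e_{T^\e_k})$, $A_i^{1/2}(x^\e_{T^\e_k})$ driven by $\beta^\e$, and a second process $\tilde x^\e$ solving the exact averaged SDE driven by the \emph{same} $\beta^\e$. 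The proof then consists of quantitative $L^4$/$L^2$ bounds between $x^\e$ and $\xi^\e$ (averaging one period at a time against the deterministic rotation $\bar X^{\e,k}$) and between $\xi^\e$ and $\tilde x^\e$ (a Gr\"onwall recursion). Since $\tilde x^\e$ has exactly the law of the limiting diffusion, the resolvent identity follows from It\^o plus these strong closeness estimates --- no corrector, no martingale problem, no Skorokhod-topology argument about exit-time continuity. What the coupling buys is a uniform treatment of the three cases in the lemma (interior edge, edge meeting an exterior vertex, one-ended edge); what your corrector route buys is brevity in Cases 1 and 3, but it stalls in Case 2.

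Concretely, two points in your sketch would need repair. First, you also misread what the paper's new method "unifies": it is the interior-of-edge estimate and the behavior near an exterior vertex (local extremum), which in the Freidlin--Wentzell book are handled by separate lemmas; it is \emph{not} the perturbed-vs-unperturbed distinction, and the disposal of $b^\e,\sigma^\e$ on a compact set is indeed trivial but beside that point. Second, your key step (1) --- that $|\nabla H|\geq c_0>0$ and $|g|\geq c_0'>0$ on $K=\overline{D_i(H_1,H_2)}$ --- is false in Case 2, where $H_1=H(O_k)$ is a critical value and $K$ contains the nondegenerate extremum $O_k$ in its closure, so both $\nabla H$ and $g$ vanish there. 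The corrector $\chi$, which solves the transport equation along level curves, then fails to have uniform bounds up to the vertex (its construction divides by $|g|$), which is precisely why the classical treatment needs a separate argument near exterior vertices. If you want to pursue the corrector route you must add that separate piece; the paper's coupling avoids it because the period bounds $\underline C\e\leq T^\e(z)\leq\bar C\e$ and the coefficient Lipschitz bounds survive down to the nondegenerate extremum even when $\nabla H$ does not. Finally, you do not spell out the passage from the $T$-truncated exit time $\tau^\e_T$ to $\tau^\e_{H_1,H_2}$ itself; the paper bridges this with a uniform bound on $\sup_\e\E[\tau^\e_{H_1,H_2}]$ (via the maximum principle for $\mathcal{L}_i u=-1$) and a Fatou/Markov-chain reworking of the coupling estimates over a random number of periods, and some analogue of that is needed no matter which approach you take.
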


In order to study the term $\Phi_1$, we need the following result.
\begin{Lemma}\label{not sticky lemma}
Let $O_k$ be an interior vertex, then for every positive $\a$ and $\kappa$, there exists $\d_0>$ such that for all $0< \d < \d_0$ and sufficiently small $\e$, 
\begin{equation*}
\sup_{z\in D_k(\pm \d)} \mathbb{E}_{z}^\e[\int_0^{\tau^\e_k(\pm \d)} e^{-\a t} dt] < \kappa \d,
\end{equation*}
and there exists constant C such that
\begin{equation*}
\sup_{q \in D_k(\pm \d)}\mathbb{E}^\e_z[\tau^\e_k(\pm \d) ] \leq C \d^2 \ln(\d).
\end{equation*}
(see remark on page 310 in \cite{fw})
\end{Lemma}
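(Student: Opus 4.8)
Since $\int_0^{\tau}e^{-\a t}\,dt\le\tau$ for any stopping time $\tau$, both assertions follow once we establish
\[
\sup_{z\in D_k(\pm\d)}\mathbb{E}^\e_z\bigl[\tau^\e_k(\pm\d)\bigr]\;\le\;C\,\d^{2}\ln(1/\d)
\]
for all small $\d$ and all $\e$ small enough (the threshold for $\e$ allowed to depend on $\d$): the right-hand side is $o(\d)$ as $\d\to 0$, which gives the first estimate with $\kappa\d$ as soon as $\d_0$ is chosen with $C\d_0\ln(1/\d_0)<\kappa$. For the displayed bound the plan is the classical Lyapunov/Dynkin argument: for each small $\d$ I would construct a nonnegative $V_\d\in\textbf{C}^2(\overline{D_k(\pm\d)})$ with $\sup V_\d\le C\d^{2}\ln(1/\d)$ and $\mathcal{A}^\e V_\d\le -1$ on $D_k(\pm\d)$, where $\mathcal{A}^\e=\tfrac1\e\,g\cdot\nabla+\mathcal{L}^\e$ is the generator of $q^\e$ and $\mathcal{L}^\e=\mathcal{L}_0+\mathcal{L}_0^\e$; Dynkin's formula then gives $\mathbb{E}^\e_z[\tau^\e_k(\pm\d)]\le V_\d(z)$ uniformly in $z$. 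Here $\overline{D_k(\pm\d_0)}$ is compact by Hypothesis~\ref{h2}(4), so $b^\e,\sigma^\e$ are uniformly small there for $\e$ small, and $\sigma+\sigma^\e$ remains invertible in a fixed neighbourhood $U$ of the saddle $z_0$ (with $H(z_0)=H(O_k)$) by Hypothesis~\ref{h2}(2).

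The construction of $V_\d$ is local and has to reflect the two effects that coexist near an interior vertex. On the ``bulk'' of $D_k(\pm\d)$, i.e. away from $U$, the process behaves as in the interior of an edge, and the natural barrier is $V_\d=\phi\circ H$ with $\phi$ (essentially) the expected exit time of the limiting one-dimensional diffusion generated by $\mathcal{L}_i$ from the interval $(H(O_k)-\d,H(O_k)+\d)$, so that $\mathcal{L}_i\phi=-1$ and $\phi$ vanishes at the endpoints. Since the period $T_i(x)=\oint_{C_i(x)}dl/|g|$ diverges logarithmically as $x\to H(O_k)$, the effective diffusivity $A_i(x)$ vanishes like $1/\ln(1/|x-H(O_k)|)$ there, whence $\sup\phi\sim\d^{2}\ln(1/\d)$; this is the origin of the logarithm. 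Because $g\cdot\nabla(\phi\circ H)\equiv 0$ by \eqref{Hamiltonian}, one has $\mathcal{A}^\e(\phi\circ H)=\phi'(H)\mathcal{L}^\e H+\tfrac12\phi''(H)|\mathcal{R}^\e H|^2$, which equals $\mathcal{L}_i\phi(H)=-1$ only after averaging along a level curve against the density $|g|^{-1}dl$; to upgrade this to the pointwise inequality I would add a corrector $\e\,\chi$, with $\chi$ solving the cell problem $g\cdot\nabla\chi=\tfrac12\phi''(H)\,(A_i(H)-|\mathcal{R}^\e H|^2)$ along each level curve (solvable as the right-hand side has zero mean with respect to $|g|^{-1}dl$), the extra term $\e\,\mathcal{L}^\e\chi$ being lower order. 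Inside $U$ this ansatz breaks down because $\mathcal{R}^\e H$ vanishes at $z_0$; there I would use instead the hyperbolic structure of the flow --- in Morse coordinates (available by Hypothesis~\ref{h2}(3)) there is a ``time along the flow'' coordinate $w$ with $g\cdot\nabla w\equiv$ const $>0$ that varies by $\sim\ln(1/|H-H(O_k)|)$ across a level curve --- together with a barrier of size $\sim\d^{2}\ln(1/\d)$, affine in $w$, for which the singularly large term $\tfrac1\e g\cdot\nabla V_\d$ beats $\mathcal{L}^\e V_\d+1$; on the residual sliver $\{|H-H(O_k)|\lesssim\e\}$ around $z_0$, where the flow term no longer dominates, I would drop back to the non-degeneracy of $\sigma+\sigma^\e$ (Hypothesis~\ref{h2}(2)) and a uniformly elliptic barrier of size $O(\e\ln(1/\e))$. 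The pieces are patched across $\partial U$ with a smooth cut-off, the mismatch being absorbed into $C$.

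At every step the extra terms $b^\e,\sigma^\e$ enter only through $\sup_{\overline{D_k(\pm\d_0)}}(|b^\e|+|\sigma^\e|+|\nabla b^\e|+|\nabla\sigma^\e|)$ multiplied by quantities already controlled, so their uniform convergence to $0$ on the compact set $\overline{D_k(\pm\d_0)}$ lets us absorb them by taking $\e$ small (depending on $\d$); the drift $b$ is likewise a bounded, lower-order perturbation of $\tfrac1\e g$ near $z_0$. The whole scheme is a modification of the argument in \cite[Chapter 8]{fw} (cf.\ the remark on p.~310 there). I expect the main obstacle to be making the estimate inside $U$ uniform in $\d$ and $\e$ at the same time --- in particular the treatment of the sliver $\{|H-H(O_k)|\lesssim\e\}$ at the saddle, where one must pass from the deterministic-flow mechanism to the non-degenerate-diffusion mechanism and still produce barriers that match up across the interfaces without destroying the $C\d^{2}\ln(1/\d)$ bound.
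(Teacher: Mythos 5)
Your approach is genuinely different from the paper's, and much heavier. The paper's entire proof is a one-line reduction: it invokes Lemma~3.5 in Chapter~8 of \cite{fw} (exit time from a saddle neighbourhood for the equation with identity diffusion) together with a random time change, which uses the local invertibility of $\sigma$ near the saddle from Hypothesis~\ref{h2}(2) to absorb the state-dependent diffusion coefficient $(\sigma+\sigma^\e)(\sigma+\sigma^\e)^*$ into a time reparametrisation that is bounded above and below on $\overline{D_k(\pm\d_0)}$. That turns the problem into exactly the classical one and the $C\d^2|\ln\d|$ bound transfers directly (the remark on p.~310 of \cite{fw} handles the $\a$-resolvent version). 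Your proposal instead rebuilds the classical estimate from scratch via a Lyapunov/Dynkin argument with a hand-made barrier $V_\d$ patched over three regions (edge-type bulk with a corrector, hyperbolic annulus in Morse coordinates, elliptic sliver near $z_0$). In principle this could recover the same bound with more explicit control, but it is precisely a re-derivation of the hardest technical lemma of \cite[Ch.~8]{fw}, and you yourself flag the unresolved issue: making the barrier work simultaneously in $\d$ and $\e$ at the crossover between the flow-dominated annulus and the $O(\e)$-thick elliptic sliver, while keeping the $\d^2\ln(1/\d)$ size. As written this is a plan, not a proof --- the matching of the three pieces, the sign of $\mathcal{A}^\e V_\d$ across the cut-offs, and the solvability and boundedness of the corrector $\chi$ all need to be verified, and you have also omitted the drift mismatch $\phi'(H)\bigl(\mathcal{L}^\e H-B_i(H)\bigr)$ from the cell problem for $\chi$. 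The random-time-change route the paper follows sidesteps all of this and is the intended one.
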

We can use Lemma 3.5 in \cite{fw} and apply the change of random time to prove this Lemma easily. Now if we take 
\[
\kappa= \frac{\eta}{\a ||f||_{\infty} +||\a f - L f||_{\infty}}
\]
in Lemma \ref{not sticky lemma}, we have
\begin{equation*}\label{phi1 decomposition}
\begin{aligned}
|\Phi^\e_1(x,i)| 
\leq& |\mathbb{E}_{(x,i)}[f(x^\e_{\sigma^\e_1}, i^\e_{\sigma^\e_1}) - f(x,i)]| + (\a ||f||_{\infty} +||\a f - L f||_{\infty}) \cdot 
\frac{\d}{\a ||f||_{\infty} +||\a f - L f||_{\infty}} \eta\\
\leq&  \sum_{I_j \sim O_k} |\mathbb{E}_{q_0}[ f_j(H(q^\e_{\sigma^\e_1})) - f_i(H(q)); q^\e_{\sigma^\e_1} \in I_j]|+\d \eta\\
=& \sum_{I_j \sim O_k} |f_j(H(O_k)+ \d)-f_i(x)| \cdot \mathbb{P}_{q_0}[q^\e_{\sigma^\e_1} \in I_j] + \d \eta.
\end{aligned}
\end{equation*}
We will prove the following Lemma in Section \ref{gluing condition} to handle the term 
\[
\sup_{ q_0 \in  \bigcup_{I_j \sim O_k} C_j(\pm \d')\}}\mathbb{P}_{q_0}[q^\e_{\sigma^\e_1} \in I_j]
\]
The following lemma will be proved in section 5.
\begin{Lemma}\label{lemma gluing condition}
For every $\kappa>0$ there exists a positve $\d_0>0$ such that for $0<\d<\d_0$ there exists $\d'_0=\d'_0(\d)$ such that for sufficiently small $\e$,
\begin{equation*}
\sup_{x \in \bar{D}_k(\pm \d'_0)}|\mathbb{P}_x[\hat{q}^\e_{\tau^\e_k(\pm\d)} \in C_{ki}(\d)]-p_{ki}|<\kappa,
\end{equation*}
where $p_{ki}$ is the constant defined in \eqref{gluing probability}.
\end{Lemma}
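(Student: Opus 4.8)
The plan is to transfer the exit problem near the saddle $O_k$ to a ``clean'' fast system that is covered directly by the Freidlin--Wentzell analysis via generalized differential operators (Section~2.2, following Chapter~8 of \cite{fw}), and to control every reduction by an error that vanishes once one first lets $\delta\to 0$ and then $\epsilon\to 0$. Since the trajectory is stopped at $\tau:=\tau^\e_k(\pm\delta)$, all the analysis takes place on $\bar D_k(\pm\delta)$; fixing $\delta_0$ small enough that $\bar D_k(\pm\delta)$ lies in the neighbourhood of $O_k$ on which $\sigma$ is invertible (Hypothesis~\ref{h2}(2)), both $\sigma^{-1}$ and, for small $\epsilon$, $(\sigma+\sigma^\e)^{-1}$ (using $\sigma^\e\to 0$ on compacts) are bounded there uniformly in $\epsilon$.

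\emph{Auxiliary vector field.} On $\bar D_k(\pm\delta)$ introduce the auxiliary drift $\hat b^\e$ determined by the requirement $(\hat{\mathcal L}_0^\e)^* a^{-1}=0$, where $\hat{\mathcal L}_0^\e=\hat b^\e\cdot\nabla+\tfrac12(\sigma+\sigma^\e)(\sigma+\sigma^\e)^*:\nabla^2$ and $\mathcal L^*$ is the formal adjoint used in Theorem~\ref{t8}; concretely $\hat b^\e=\tfrac a2\,\diver\bigl((\sigma+\sigma^\e)(\sigma+\sigma^\e)^* a^{-1}\bigr)$ does the job, and it is bounded on $\bar D_k(\pm\delta)$ uniformly in $\epsilon$ and converges uniformly there as $\epsilon\to 0$. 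With this choice, Theorem~\ref{t8} applied to the coefficients $\hat b^\e,\sigma+\sigma^\e$ shows that the averaged operators in the interior of each edge $I_i$ of the process
\[
d\hat q^\e_t=\tfrac1\e g(\hat q^\e_t)\,dt+\hat b^\e(\hat q^\e_t)\,dt+(\sigma+\sigma^\e)(\hat q^\e_t)\,dW_t
\]
are \emph{exactly} of generalized-differential-operator form, $\hat{\mathcal L}_i^\e=\tfrac12\,\tfrac{d}{dv_i^\e}\tfrac{d}{du_i^\e}$, with the scale and speed densities of Section~2.2 built from $(\sigma+\sigma^\e)(\sigma+\sigma^\e)^*$; moreover $((u_i^\e)'(x))^{-1}=\oint_{C_i(x)}\frac{\nabla H(\sigma+\sigma^\e)(\sigma+\sigma^\e)^*\nabla H}{|g|}\,dl\to\beta_{ki}^\e\to\beta_{ki}$ as $x\to O_k$ and $\epsilon\to 0$. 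Hence $\hat q^\e$ is covered by the Freidlin--Wentzell exit analysis at a saddle for generalized differential operators (Chapter~8 of \cite{fw}), which gives: for every $\kappa>0$ there is $\delta_1\in(0,\delta_0)$ and, for each $0<\delta<\delta_1$, a radius $\delta'_0=\delta'_0(\delta)$ with
\[
\limsup_{\epsilon\to 0}\ \sup_{x\in\bar D_k(\pm\delta'_0)}\bigl|\mathbb P_x[\hat q^\e_\tau\in C_{ki}(\delta)]-p_{ki}\bigr|<\tfrac\kappa2 .
\]

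\emph{Girsanov.} The process $\hat q^\e$ differs from the true $q^\e$ only through its drift, $\hat b^\e$ versus $b+b^\e$, so by Girsanov the laws $\mathbb P_x$ of $q^\e$ and $\hat{\mathbb P}_x$ of $\hat q^\e$, restricted to $\mathcal F_\tau$, are mutually absolutely continuous with density $\exp\bigl(\int_0^\cdot\theta^\e(q^\e_s)\,dW_s-\tfrac12\int_0^\cdot|\theta^\e(q^\e_s)|^2\,ds\bigr)$, $\theta^\e=(\sigma+\sigma^\e)^{-1}(b+b^\e-\hat b^\e)$, bounded on $\bar D_k(\pm\delta)$ by some $M$ uniform in $\epsilon$. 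The relative entropy on $\mathcal F_\tau$ equals $\tfrac12\mathbb E_x\int_0^\tau|\theta^\e(q^\e_s)|^2\,ds\leq\tfrac{M^2}2\,\mathbb E_x[\tau]$, and by Lemma~\ref{not sticky lemma} $\mathbb E_x[\tau]\leq C\delta^2|\ln\delta|$ uniformly in $x\in D_k(\pm\delta)$ and small $\epsilon$; Pinsker's inequality then yields $\|\mathbb P_x-\hat{\mathbb P}_x\|_{TV}\leq CM\delta\sqrt{|\ln\delta|}$ on $\mathcal F_\tau$. Shrinking $\delta$ so that this last quantity is below $\tfrac\kappa2$, and applying it to the event $\{X_\tau\in C_{ki}(\delta)\}\in\mathcal F_\tau$ (the same functional of the canonical path under either measure), the triangle inequality together with the previous display gives
\[
\limsup_{\epsilon\to0}\ \sup_{x\in\bar D_k(\pm\delta'_0)}\bigl|\mathbb P_x[q^\e_\tau\in C_{ki}(\delta)]-p_{ki}\bigr|<\kappa ,
\]
which is the assertion; the version for $\hat q^\e$ is already contained in the previous step, so the statement holds in either reading.

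\emph{Main obstacle.} The genuinely delicate part is the interleaving of the three parameters $\delta,\delta',\epsilon$: the classical gluing estimate is itself an iterated limit --- choose $\delta$, then an inner radius $\delta'\ll\delta$, then send $\epsilon\to0$ --- so one must fix $\delta$ small enough to tame both the Girsanov error $\sim\delta\sqrt{|\ln\delta|}$ and the classical gluing error, then freeze $\delta'_0(\delta)$, and let $\epsilon\to 0$ last, keeping every estimate uniform over $x\in\bar D_k(\pm\delta'_0)$. A secondary technical point is to verify that the Freidlin--Wentzell computation at a saddle carries over for the state-dependent generator once it is written through $(u_i^\e,v_i^\e)$ --- in particular that the exit weight is exactly $\beta_{ki}$, which hinges on the logarithmic divergence of the period $T_i(x)$ at the separatrix being matched by that of $\oint_{C_i(x)}\frac{\nabla H(\sigma+\sigma^\e)(\sigma+\sigma^\e)^*\nabla H}{|g|}\,dl$, so that $A_i(x)\to0$ while $B_i(x)$ tends to a finite, edge-independent limit.
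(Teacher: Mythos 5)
Your proposal follows essentially the same route as the paper: compensate the drift by an auxiliary field chosen so that the level-averaged generator is an exact generalized differential operator (equivalently, so that $a^{-1}$ is stationary), reduce the gluing probability for the compensated process to the classical Freidlin--Wentzell exit analysis, and then transfer back to $q^\e$ by Girsanov using the short-exit-time bound from Lemma~\ref{not sticky lemma}, with the $\delta,\delta',\e$ quantifiers interleaved exactly as you describe. Within that shared skeleton there are two localized differences worth naming. First, for the compensating drift the paper fixes $\hat b_2\equiv 0$ and integrates a first-order PDE in $x_1$ (equation~\eqref{eq31}), whereas you take the ``canonical'' choice $\hat b^\e=\tfrac a2\diver\bigl((\sigma+\sigma^\e)(\sigma+\sigma^\e)^*a^{-1}\bigr)$; both annihilate $a^{-1}$ under $(\hat{\mathcal L}_0^\e)^*$ and are bounded on the saddle neighborhood, so this is just a stylistic choice. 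Second, and more substantively, for the Girsanov error the paper works with the Radon--Nikodym density up to a \emph{fixed} horizon $T=\delta$, squares $\exp(Z_T)-1$, expands by It\^o and a second Girsanov change, and then separately controls the tail event $\{\tau^\e_k(\pm\delta)>\delta\}$ by Markov's inequality; your Pinsker-on-$\mathcal F_\tau$ argument gets the same (in fact slightly sharper, $\delta\sqrt{|\ln\delta|}$ versus $\delta^{1/2}$) conclusion in one stroke and is arguably cleaner, though one must note that applying Girsanov up to the stopping time $\tau$ is legitimate here because $\mathbb E_x[\tau]<\infty$ under both laws. The one point where you lean on citation and the paper does real work is the base case: you invoke the Freidlin--Wentzell saddle-exit analysis for generalized differential operators wholesale, whereas the paper's Sections~5.1--5.3 derive the gluing probability for the compensated process explicitly from the Khasminskii invariant-measure representation \eqref{eq26}--\eqref{eq27}, the Krylov--Safonov averaging lemma (Lemma~\ref{averaging measure 1}), and the weak convergence Lemma~\ref{convergence in edge} applied to the Green's function $u$. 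Your instinct that the $\beta_{ki}$ weights emerge from the scale density $u_i'$ is correct, and once Theorem~\ref{t8} shows the residue term vanishes under the compensation, the invocation is defensible; but a fully self-contained write-up would still need that invariant-measure computation.
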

If we take $\kappa < \eta$, then for sufficiently small $\e$,
\begin{equation*}
\begin{aligned}
\Phi^\e_1(x,i)  
\leq& \sum_{I_j \sim O_k} |f_j(H(O_k)+ \d)-f_i(x)| \cdot \mathbb{P}_{q_0}[q^\e_{\sigma^\e_1} \in I_j] + \d \eta\\
\leq& \sum_{I_j \sim O_k}  ||f_j(H(O_k)+ \d)-f_i(x)|- \d D_{jk}f(x)| \cdot |\mathbb{P}_{q_0}[q^\e_{\sigma^\e_1} \in I_j]-p_{kj}| + \\
&\sum_{I_j \sim O_k} ||f_j(H(O_k)+ \d)-f_i(x)|- \d D_{jk}f(x)|\cdot  p_{kj} \\ 
&\ \ \ \ \ \ \ \ \ \ \ \ \ \ \ \ \ \ \ \ \ \ \ \ \ \ \ \ \ \ \ \ \ \ \ \ \ \ \ \ \ \ \ \ \ \ \ \ \  \ \ \ \ \ \ \ \ \ +\sum_{I_j \sim O_k} \d |D_{jk}f(x)| \cdot |\mathbb{P}_{q_0}[q^\e_{\sigma^\e_1} \in I_j]-p_{kj}| \\
\leq& C(\kappa \d^2 + \d^2 + \d \kappa) \leq C\d \eta.
\end{aligned}
\end{equation*}

To this point, we first pick $\kappa < \eta$ and then determine $\d$ and $\d' $according to Lemma \ref{lemma gluing condition}. After that, we let $\e$ sufficiently small so that Lemma \ref{lemma gluing condition} holds for $\kappa< \eta$ and Lemma \ref{convergence in edge} holds for
 \[
 \sup_{(x,i) \in \bigcup_{i \in S} C_i(\pm \d)} |\Phi^\e_2(x,i)| \leq \d \eta,
\]
Therefore
\begin{equation*}
\mathbb{E}_{(x,i)}[\int_0^{T^\e_q(H_0)} e^{-\a t}(\a f - Lf)(x^\e_t,i_t)dt - f(x,i)] \leq \eta + C\eta,
\end{equation*}
which proves the result.

\section{Weak Convergence Inside the Edge} \label{Weak Convergence Inside the Edge}
We first prove the weak convergence of $x^\e_t$ inside an edge of finite length. For $H_1<H_2<\infty$, recall that $\tau^\e_{H_1,H_2}$ is the stopping time when the process $q^\e_t$ leaves the region $D_i(H_1, H_2)$. Noticed that the coefficients are all bounded in $D_i(H_1, H_2)$. Let $T^\e(z)$ be the period of $X^\e_t$ starting at $z$. Suppose $\underline{C} \e  \leq T^\e(z) \leq \bar{C} \e$ for all $z \in D_i(H_1,H_2)$. Where we should remark that the constant $\underline{C} $ and $\bar{C}$ depends on the constant $H_1$ and $H_2$. In this section, we consider the process $q^\e_t$ starting from $q_0 \in D_i(H_1,H_2)$ and denote $\tau^\e_T= \tau^\e_{H_1,H_2} \wedge T$. We will first prove a weaker version of Lemma \ref{convergence in edge}.

\begin{Lemma}\label{convergence in edge weaker version}
Let $\beta_t$ and $x_t$ be defined as in Lemma \ref{convergence in edge}. Under the same condition as in Lemma \ref{convergence in edge}, for every function $f$ on $[H_1,H_2]$ that is three times continuously differentiable and for every $\la>0$,
\begin{equation*}\label{wek convergence in edge}
\lim_{\e \to 0}\mathbb{E}_{x_0}[e^{-\la \tau^\e_T} f(x^\e_{\tau^\e_T}) - \int_0^{\tau^\e_T} e^{-\la s}(-\la f + \mathcal{L}_i f)(x^\e_{s}) ds ] = f(x_0)
\end{equation*}
uniformly with respect to $x_0 \in [H_1,H_2]$.
\end{Lemma}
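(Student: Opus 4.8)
The plan is to derive the displayed identity from It\^o's formula and then show that the remainder tends to $0$. Note first that, by part~(4) of Hypothesis~\ref{h2}, the region $D_i(H_1,H_2)$ has compact closure, so $g,b,b^\e,\sigma,\sigma^\e$ and the derivatives of $H$ that enter are bounded on $\overline{D_i(H_1,H_2)}$; since moreover $\tau^\e_T\le T$, every stochastic integral appearing below is a genuine martingale, killed by the bounded stopping time $\tau^\e_T$. Starting from $dx^\e_t=\mathcal{L}^\e H(q^\e_t)\,dt+\mathcal{R}^\e H(q^\e_t)\,dW_t$ and applying It\^o's formula to $e^{-\la t}f(x^\e_t)$ up to $\tau^\e_T$, taking expectations (the martingale disappears) and subtracting $\mathbb{E}_{x_0}\int_0^{\tau^\e_T}e^{-\la s}(-\la f+\mathcal{L}_if)(x^\e_s)\,ds$ from both sides, the claim becomes equivalent, because $\mathcal{L}_if=B_if'+\tfrac12A_if''$, to
\begin{equation*}
\sup_{x_0}\left|\mathbb{E}_{x_0}\int_0^{\tau^\e_T}e^{-\la s}\left\{f'(x^\e_s)\bigl[\mathcal{L}^\e H(q^\e_s)-B_i(x^\e_s)\bigr]+\tfrac12 f''(x^\e_s)\bigl[\mathcal{R}^\e H\,(\mathcal{R}^\e H)^*(q^\e_s)-A_i(x^\e_s)\bigr]\right\}ds\right|\longrightarrow 0 .
\end{equation*}

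Next I would peel off the terms of order $\e$. Writing $\mathcal{L}^\e H-B_i\circ H=(\mathcal{L}_0H-B_i\circ H)+\mathcal{L}_0^\e H$ and $\mathcal{R}^\e H\,(\mathcal{R}^\e H)^*-A_i\circ H=(\mathcal{R}_0H\,(\mathcal{R}_0H)^*-A_i\circ H)+\rho^\e$, where $\mathcal{L}_0^\e H$ and $\rho^\e=\nabla H^t[\sigma(\sigma^\e)^*+\sigma^\e\sigma^*+\sigma^\e(\sigma^\e)^*]\nabla H$ collect every term containing $b^\e$ or $\sigma^\e$, part~(2) of Hypothesis~\ref{h1} makes these bounded on $\overline{D_i(H_1,H_2)}$ by some $\omega(\e)\to 0$, so their contribution to the integral is $\le CT\,\omega(\e)\to 0$ uniformly in $x_0$. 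Since $f'(H(\cdot))$ and $f''(H(\cdot))$ are constant on each level set $C_i(x)$, and $B_i(x),A_i(x)$ are exactly the averages of $\mathcal{L}_0H$, resp.\ of $\mathcal{R}_0H\,(\mathcal{R}_0H)^*$, over $C_i(x)$ against $dl/|g|$, the two remaining pieces are each of the form $F(q^\e_s)-\bar F_i(H(q^\e_s))$ with $F\in C^1(\overline{D_i(H_1,H_2)})$ bounded and $\bar F_i(x)=T_i(x)^{-1}\oint_{C_i(x)}F\,dl/|g|$. Thus it suffices to prove, for every such $F$,
\begin{equation*}
\sup_{q_0\in\overline{D_i(H_1,H_2)}}\left|\mathbb{E}_{q_0}\int_0^{\tau^\e_T}e^{-\la s}\bigl[F(q^\e_s)-\bar F_i(H(q^\e_s))\bigr]\,ds\right|\longrightarrow 0 .
\end{equation*}

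To establish this I would introduce the auxiliary real-valued process $v(q^\e_t)$, where $v$ solves the cell problem along the fast flow,
\begin{equation*}
g(z)\cdot\nabla v(z)=F(z)-\bar F_i(H(z)),\qquad z\in D_i(H_1,H_2).
\end{equation*}
Because $H_1,H_2$ are interior values of the edge, $g$ does not vanish on $\overline{D_i(H_1,H_2)}$ and the sets $C_i(x)$, $x\in[H_1,H_2]$, are smooth closed orbits of $g$ foliating the region; the solvability condition on the orbit $C_i(x)$ is $\oint_{C_i(x)}(F-\bar F_i)\,dl/|g|=0$, which holds by the very definition of $\bar F_i$, so $v$ exists, is bounded, and is $C^1$ on $\overline{D_i(H_1,H_2)}$ (the flow of the $C^1$ field $g$ is $C^1$ in its initial data, and $\bar F_i\in C^1$ by Lemma~1.1 of Chapter~8 in~\cite{fw}). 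By It\^o's formula the drift of $e^{-\la t}\e v(q^\e_t)$ equals $e^{-\la t}\bigl[-\la\e v+g\cdot\nabla v+\e\,\mathcal{L}^\e v\bigr](q^\e_t)=e^{-\la t}\bigl[-\la\e v+F-\bar F_i\circ H+\e\,\mathcal{L}^\e v\bigr](q^\e_t)$ for $t<\tau^\e_T$, so integrating up to $\tau^\e_T$ and taking expectations gives
\begin{equation*}
\mathbb{E}_{q_0}\int_0^{\tau^\e_T}e^{-\la s}\bigl[F-\bar F_i\circ H\bigr](q^\e_s)\,ds=\mathbb{E}_{q_0}\bigl[e^{-\la\tau^\e_T}\e v(q^\e_{\tau^\e_T})\bigr]-\e v(q_0)+\mathbb{E}_{q_0}\int_0^{\tau^\e_T}e^{-\la s}\,\e\,(\la v-\mathcal{L}^\e v)(q^\e_s)\,ds ,
\end{equation*}
whose right-hand side is $O(\e)$ uniformly in $q_0$. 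The one genuine technical point — and the step I expect to be the main obstacle — is that It\^o's formula needs $v\in C^2$ while the construction only yields $v\in C^1$; I would get around this by approximating $v$ in $C^1(\overline{D_i(H_1,H_2)})$ by smooth $v_n$, so that $g\cdot\nabla v_n=F-\bar F_i\circ H+r_n$ with $\|r_n\|_\infty\to 0$, running the preceding computation with each $v_n$ to obtain $\limsup_{\e\to 0}\bigl|\mathbb{E}_{q_0}\int_0^{\tau^\e_T}e^{-\la s}[F-\bar F_i\circ H](q^\e_s)\,ds\bigr|\le T\|r_n\|_\infty$ for every $n$, and then letting $n\to\infty$; all bounds stay uniform in $q_0$. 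Finally I would point out that this argument does not see the three alternatives in the statement at all: whenever $[H_1,H_2]$ lies in the interior of $I_i$ the region $D_i(H_1,H_2)$ is foliated by closed orbits on which $g$ stays bounded away from $0$ and the periods stay bounded, no matter what the endpoints of $I_i$ are — this is precisely the unification referred to in the introduction. (One can also reach the same conclusion by the classical window argument of \cite[Chapter~8]{fw}, cutting $[0,\tau^\e_T]$ into successive windows each equal to one period $\e T_i(H(q^\e_{t_k}))$ of the frozen fast flow, comparing $q^\e$ with that flow on each window via Gronwall — the $\e^{-1}$ in the drift being absorbed by the $O(\e)$ length of the window — and summing the $O(\e^{3/2})$ per-window errors over the $O(\e^{-1})$ windows; the corrector route above is shorter and makes the uniformity transparent.)
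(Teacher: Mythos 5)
Your proof is correct, but it follows a genuinely different route from the paper's. After the same first reduction (It\^o for $e^{-\la t}f(x^\e_t)$, peeling off the $b^\e,\sigma^\e$ contributions as $O(\omega(\e))$), you reach the averaging statement $\E_{q_0}\int_0^{\tau^\e_T}e^{-\la s}[F-\bar F_i\circ H](q^\e_s)\,ds\to 0$ and dispatch it with a \emph{corrector}: a $C^1$ solution $v$ of the cell problem $g\cdot\nabla v=F-\bar F_i\circ H$ along the closed orbits, followed by It\^o applied to $\e v(q^\e_t)$ so that the $1/\e$-drift exactly produces $F-\bar F_i\circ H$ and everything else is $O(\e)$. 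The paper, by contrast, never forms a corrector; it constructs an explicit auxiliary process $\xi^\e_t$ by freezing the coefficients $A_i,B_i$ on each fast period $[T^\e_k,T^\e_{k+1}]$ and driving with the projected Brownian motion $\beta^\e_t=\int_0^t(\mathcal{R}_0H/|\mathcal{R}_0H|)(q^\e_s)\,dW_s$, then estimates $\E\sup|x^\e-\xi^\e|^4$ and $\E\sup|\xi^\e-\tilde x^\e|^4$ per period via the per-window deviations $\Delta^k_b,\Delta^k_\sigma$ and a Gronwall step, and sums the $O(\e^{>1})$ per-window errors over the $O(\e^{-1})$ periods. Your corrector argument is shorter and makes the uniformity in $x_0$ immediate; its one genuine sore point is exactly the one you flag --- under Hypothesis~\ref{h1} the field $g$ and coefficients are only $C^1$, so $v$ is only $C^1$ and It\^o cannot be applied directly --- and your fix (smooth $v_n\to v$ in $C^1$, run the argument with $v_n$, absorb $g\cdot\nabla v_n-(F-\bar F_i\circ H)=r_n\to 0$ in sup norm, take $\limsup_{\e\to0}$ before $n\to\infty$ so the unbounded $\|\nabla^2 v_n\|_\infty$ is killed by the prefactor $\e$) is the right one and closes the gap. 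The paper's window construction avoids any corrector-regularity issue but at the cost of several pages of trajectory-by-trajectory estimates. Your closing remark that the argument is blind to the three cases in Lemma~\ref{convergence in edge} is also the correct observation and is exactly what the paper means by its ``unified'' treatment of the edge and the exterior vertex.
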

In what follows we shall define a sequence of stopping times $T^\e_k$ defined by $T^\e_0=T^\e(q)$, $T_1=T^\e(q^\e_{T^\e_0})+T^\e_0$, ..., $T^\e_k=T^\e(q^\e_{T^\e_{k-1}})+T^\e_{k-1}$,... 
We first define an auxilliary process $\xi^\e_t$ trajectory by trajectory. 
\begin{Definition}
 Let $V^\e_s={\mathcal{R}_0 H}/{|\mathcal{R}_0 H|}(q^\e_s)$, and
\begin{equation*}
\beta^\e_{t}= \int_0^t V^\e_s dW_s.
\end{equation*} 
For every $k \in \mathbb{N}$, we define $\xi^\e_{T_k}=x^\e_{T_k}$ and for $t \in [T^\e_k\wedge \tau^\e_T, T^\e_{k+1} \wedge \tau^\e_T)$, 
\begin{equation*}
\xi^\e_t= \xi_{T^\e_k}+ B_i(x^\e_{T^\e_k})(t-T^\e_k) + A_i^{\frac{1}{2}}(x^\e_{T^\e_k}) (\beta^\e_t-\beta^\e_{T^\e_{k}}).
\end{equation*}
\end{Definition}
\begin{Remark}
We should notice that $\beta^\e_t$ is a standard Brownian motion on $\mathbb{R}$.
\end{Remark}
We should also define the auxilliary process $\tilde{x}^\e_t$ which has the same distribution as the limiting process of the slow motion.
\begin{Definition}
For $t \in [0,\tau^\e_T]$, $\tilde{x}^\e_t$ is the solution of the problem
\begin{equation*}
d\tilde{x}^{\e}_t=B_i(\tilde{x}^{\e}_t)dt + A_i^{\frac{1}{2}}(\tilde{x}^{\e}_t)d\beta^{\e}_t,
\end{equation*}
with the initial condition $x_0=H(q) \in [H_1,H_2]$.
\end{Definition}

\subsection{Closeness of the $q^\e_t$ and $\bar{X}^{\e,k}_s$}
Let $\bar{X}^{\e}_t$ satisfies the equation $d\bar{X}^\e_t= \e^{-1}g(\bar{X}^\e_t) dt $ with the initial condition $\bar{X}^\e_0=q_0$. It can be easily seen that
\begin{equation*}
|q^\e_s - \bar{X}^\e_s| \leq \int_0^s \frac{L}{\e} |q^\e_u - \bar{X}^\e_u| du + \int_0^s C(1+L) |q^\e_u|du + |\int_0^s (\sigma+ \sigma^\e) (q^\e_u) dW_u|.
\end{equation*}

Therefore, for $t<T$,
\begin{equation*}
\begin{aligned}
\mathbb{E}[\sup_{0\leq s \leq t} |q^\e_s- \bar{X}^\e_s|  ^4] 
\leq& \frac{cL^4}{\e^4} \mathbb{E}[(\int_0^t\sup_{0\leq u\leq s} |q^\e_u-\bar{X}^\e_u| ds)^4] + c t^3 \int_0^t C^2(1+L)^2 \mathbb{E}[|q^\e_u|^4] du \\
&+ c \mathbb{E} [|\sup_{0\leq s \leq t } \int_0^s (\sigma+ \sigma^\e)(q^\e_u) dW_u|^4]\\
\leq& \frac{cL^4}{\e^4} \mathbb{E}[\int_0^t \sup_{0\leq u \leq s} |q^\e_u - \bar{X}^\e_u|  ^4 ds \cdot t^3] + c t^3 \int_0^t C^2(1+L)^2 \mathbb{E}[|x^\e_u|^2] du \\
&+ c(\mathbb{E} [\int_0^t \Tr[(\sigma+ \sigma^\e)^*(\sigma+ \sigma^\e)](q^\e_s) ds])^2\\ 
\leq& \frac{Ct^3}{\e^4} \mathbb{E}[\int_0^t \sup_{0\leq u \leq s} |q^\e_u - \bar{X}^\e_u|  ^4 ds ] + C(1+t+t^3)   \int_0^t  \mathbb{E}[|x^\e_u|^2] du. 
\end{aligned}
\end{equation*}
Due to equation (\ref{eq3}), $\mathbb{E}[|x^\e_u|^2] \leq C(T,x_0)$, we have
\begin{equation*}
\mathbb{E}[\sup_{0\leq s \leq t} |q^\e_s- \bar{X}^\e_s|  ^4]  \leq  \frac{Ct^3}{\e^4} \mathbb{E}[\int_0^t \sup_{0\leq u \leq s} |q^\e_u - \bar{X}^\e_u|  ^4 ds ] + C(T,x_0)(t+t^2+t^4).
\end{equation*}
Apply Gronwall's inequality, we have
\begin{equation}\label{eq closeness of trajectory}
\mathbb{E}[\sup_{0\leq s \leq t} |q^\e_s- \bar{X}^\e_s|  ^4] < C(T,x_0)t(1+t+t^3) \exp(\frac{Ct^4}{\e^4}).
\end{equation}

\subsection{$\Delta^k_{b}(t)$ and $\Delta^k_{\sigma}(t)$}
In this subsection, we will introduce two critical term in the estimation of the closeness of the trajectories. For $t \in [T^\e_{k-1} \wedge \tau^\e_T, T^\e_k \wedge \tau^\e_T]$, let
\begin{equation*}
\Delta^{k-1}_b(t)= |\int_{T^\e_{k-1}\wedge \tau^\e_T}^{t} \mathcal{L}_0 H (q^\e_s) ds - (t -T^\e_{k-1}\wedge \tau^\e_T) B_i(x^{\e}_{T^\e_{k-1}\wedge \tau^\e_T})|,
\end{equation*}
and 
\begin{equation*}
\Delta^{k-1}_b(T^\e_k\wedge \tau^\e_T)=\Delta^{k-1}_b.
\end{equation*}
It is easy to see that  $\Delta^{k-1}_b(t)=0$, if $\tau^\e_T< T^\e_{k-1}$. Otherwise,
\begin{equation*}
\begin{aligned}
|\Delta^{k-1}_b(t)|
&= |\int_{T^\e_{k-1}}^{t } \mathcal{L}_0 H (q^\e_s) ds - (t-T^\e_{k-1} \wedge \tau^\e_T) B_i(x^{\e}_{T^\e_{k-1}})|\\
&= |\int_{T^\e_{k-1} }^{t} \mathcal{L}_0 H (q^\e_s) ds -  \frac{t-T^\e_{k-1} }{T^\e(q^{\e}_{T^\e_{k-1}})} \oint_{C_i(x^\e_{T^\e_{k-1}}) } {\mathcal{L}_0(u)\frac{\e dl}{|g(u)|}}|\\
&=|\int_{T^\e_{k-1}}^{t} \mathcal{L}_0[H](q^\e_s) ds - \frac{t-T^\e_{k-1}}{T^\e_k-T^\e_{k-1}}\int_{T^\e_{k-1}}^{T^\e_k } \mathcal{L}_0(\bar{X}^{\e,k}_s)ds|\\
&\leq \int_{T^\e_{k-1}}^{T^\e_k \wedge \tau} |\mathcal{L}_0 H (q^\e_s)-\mathcal{L}_0 H (\bar{X}^{\e,k}_s)|ds + \int_{t}^{T^\e_k} |\mathcal{L}_0 H (\bar{X}^{\e,k}_s)| ds \\
&\ \ \ \ \ \ \ \ \ \ \ \ \ \ \ \ \ \ \ \ \ \ \ \ \ \ \ \ \ \ \ \ \ \ \ \ \ \ \ \ \ \ \ \ \ \ \ \ \ \ \ \ \ \ \ \ \ \  + \frac{T^\e_k-t}{T^\e_k - T^\e_{k-1}} \int_{T^\e_{k-1}}^{T^\e_k} |\mathcal{L}_0 H (\bar{X}^{\e,k}_s)| ds.
\end{aligned}
\end{equation*}
Where $\bar{X}^{\e,k}_s$ satisfies the equation $d\bar{X}^{\e,k}_t= \frac{1}{\e} g(\bar{X}^{\e,k}_t)dt,\  \bar X^{\e,k}_0=q^\e_{T^\e_{k-1}}$. Notice that the last two terms in the summation appears only once in all $\{\Delta^k_b(t)\}_{k \in \mathbb{N}}$. For the first term, we have
\begin{equation*}
\begin{aligned}
\int_{T^\e_{k-1}}^{T^\e_k \wedge \tau^\e_T} |\mathcal{L}_0 H(q^\e_s)-\mathcal{L}_0 H(\bar{X}^\e_s)|ds 
&\leq \int_{T^\e_{k-1}}^{T^\e_k \wedge \tau^\e_T} |\mathcal{L}_0 H|_{Lip} |q^\e_s- \bar{X}^\e_s| ds \\
&\leq C(\mathcal{L}_0, I_i) \int_{T^\e_{k-1}}^{T^\e_k \wedge \tau^\e_T} |q^\e_s- \bar{X}^\e_s| ds.
\end{aligned}
\end{equation*}
Thus, if we set, for $t \in [T^\e_{k-1}\wedge \tau^\e_T, T^\e_k \wedge \tau^\e_T]$ and every $k$ except one, we get
\begin{equation*}
\begin{aligned}
\mathbb{E}[|\Delta^{k-1}_b(t) |^4 | \mathcal{F}_{T^\e_{k-1}}] 
&\leq C\mathbb{E}[(\int_{T^\e_{k-1}}^{T^\e_k \wedge \tau^\e_T} |q^\e_s- \bar{X}^{\e,k}_s| ds)^4 | \mathcal{F}_{T^\e_{k-1}}]\\
&\leq C \mathbb{E}_{q^\e_{T^\e_{k-1}}}[\int_0^{T^\e_0} |q^\e_s-\bar{X}^{\e,k}_s|^4 ds (T^\e_0 \wedge \tau^\e_T)^3 ]\\
&\leq C (\bar{C} \e)^4 \mathbb{E}_{q^\e_{T^\e_{k-1}}}[\sup_{0 \leq s \leq \bar{C} \e} |q^\e_s - \bar{X}^\e_s|^4]. 
\end{aligned}
\end{equation*}

Similarlly, for $t \in [T^\e_{k-1} \wedge \tau^\e_T, T^\e_k \wedge \tau^\e_T]$, let
\begin{equation*} 
\Delta^{k-1}_\sigma(t)= \int_{T_{k-1}\wedge \tau^\e_T}^{t} |A_i^{\frac{1}{2}}(x^\e_{T^\e_{k-1} \wedge \tau^\e_T})V^\e_s- \mathcal{R}_0 H (q^\e_s)|^2 ds
\end{equation*}
and
\begin{equation*}
\Delta^{k-1}_{\sigma}(T^\e_k \wedge \tau^\e_T)=\Delta^{k-1}_{\sigma}.
\end{equation*}
Due to the inequality that $(x-y)^2 \leq 2 |x^2-y^2|$, for $x,y \in \mathbb{R}^+$, we have for $t \in [T^\e_{k-1} \wedge \tau^\e_T, T^\e_k \wedge \tau^\e_T]$
\begin{equation*}
\Delta^{k-1}_\sigma(t)\leq 2 \int_{T_{k-1}}^{T_k \wedge \tau^\e_T} |A_i(x^\e_{T_{k-1}}) - \mathcal{R}_0^*[H] \mathcal{R}_0[H](q^\e_s)| ds.
\end{equation*}
By perceeding as for $\Delta^{k-1}_b(t)$, we have for $t \in [T^\e_{k-1} \wedge \tau^\e_T, T^\e_k \wedge \tau^\e_T]$ and every k except one,
\begin{equation*}
\mathbb{E}[\Delta_\sigma^k(t) | \mathcal{F}_{T^\e_{k}}] \leq C_1 \bar{C}\e \mathbb{E}_{q^\e_{T^\e_k}}[\sup_{0\leq s \leq \bar{C}\e}|q^\e_s - \bar{X}^{\e,k}_s|].
\end{equation*}

\subsection{Closeness of $\xi^{\e}_t$ and $x^\e_t$}
Let \[h(\e)=\sup_{x\in \mathbb{R}^2}\max \{|b^\e(x)|, |\sigma^{\e}(x)|\},\] then $\lim_{\e \to 0} h(\e) = 0$ as we assumed in Hypothesis \ref{h1}. For $t \in [T^\e_k\wedge \tau^\e_T , T^\e_{k+1}\wedge \tau^\e_T)$, we have
\begin{equation*}
\begin{aligned}
&|\xi^\e_t- x^\e_t| \\
\leq& |B_i(x^\e_{T^\e_{k}})(t-T^\e_k) - \int_{T^\e_k}^{t} \mathcal{L}_0 H (q^\e_s) ds| + |A_i^{1/2}(x^\e_{T^\e_k}) (\beta_t-\beta_{T^\e_k}) - \int_{T^\e_k}^t \mathcal{R}_0 H (q^\e_s) dWs|\\
&+ |\int_{T^\e_k}^t \mathcal{L}^{\e}_0 H (q^\e_s) ds| + |\int_{T^\e_k}^t \mathcal{R}^{\e}_0 H (q^\e_s) dWs|\\
\leq& |B_i(x^\e_{T^\e_{k}})(t-T^\e_k) - \int_{T^\e_k}^{t} \mathcal{L}_0 H (q^\e_s) ds| + |A_i^{1/2}(x^\e_{T^\e_k}) (\beta_t-\beta_{T^\e_k})- \int_{T^\e_k}^{T^\e_{k+1}\wedge \tau} \mathcal{R}_0 H (q^\e_s) dWs| \\
&+\bar{C} \e h(\e) + |\int_{T^\e_k}^t \mathcal{R}^{\e}_0 H (q^\e_s) dWs| \\.
\end{aligned}
\end{equation*} 
If $T^\e_{k} > \tau^\e_T $, then 
\begin{equation*}
\sup_{T^\e_k \wedge \tau^\e_T \leq t \leq T^\e_{k+1} \wedge \tau^\e_T} |\xi^\e_t- x^\e_t|^4=0.
\end{equation*}
So we consider the case when $T^\e_{k} > \tau^\e_T $,  
\begin{equation*}
\sup_{T^\e_k \wedge \tau^\e_T \leq t \leq T^\e_{k+1} \wedge \tau^\e_T} |\xi^\e_t- x^\e_t|^4=\sup_{T^\e_k \leq t \leq T^\e_{k+1} \wedge \tau^\e_T} |\xi^\e_t- x^\e_t|^4,
\end{equation*}
below.
\begin{equation*}
\begin{aligned}
&\sup_{T^\e_k  \leq t \leq T^\e_{k+1} \wedge \tau^\e_T} |\xi^\e_t- x^\e_t|^4 \\
\leq& C (\Delta^k_b(T^\e_{k+1}\wedge \tau^\e_T))^4 + C \sup_{T^\e_k \leq t \leq T^\e_{k+1} \wedge \tau^\e_T} |\int_{T^\e_k}^{t} A_i^{1/2}(x^\e_{T^\e_k} \wedge \tau^\e_T) V^\e_s - \mathcal{R}_0[H](q^\e_s) dW_s |^4\\
&+C\bar{C}^4\e^4 h^4(\e)  + C \sup_{T^\e_k \leq t \leq T^\e_{k+1} \wedge \tau^\e_T}  |\int_{T^\e_k}^t \mathcal{R}^{\e}_0[H](q^\e_s) dWs|^4\\
\leq&  C (\Delta^k_b)^4 + C \sup_{T^\e_k \leq t \leq T^\e_{k+1} \wedge \tau^\e_T} |\int_{T^\e_k}^{t} A_i^{1/2}(x^\e_{T^\e_k}) V^\e_s - \mathcal{R}_0 H (q^\e_s) dW_s |^4\\
&+C \bar{C}^4\e^4 h^4(\e)  + C \sup_{T^\e_k \leq t \leq T^\e_{k+1} \wedge \tau^\e_T}  |\int_{T^\e_k}^t \mathcal{R}^{\e}_0 H (q^\e_s) dWs|^4.
\end{aligned}
\end{equation*}
Therefore, by Kolmogorov's inequality
\begin{equation*}
\begin{aligned}
     &\mathbb{E}[\sup_{T^\e_k \leq t \leq T^\e_{k+1} \wedge \tau} |\xi^\e_t- x^\e_t|^4 | \mathcal{F}_{T^\e_k}]\\
\leq& C \mathbb{E}[ (\Delta_b^k)^4 | \mathcal{F}_{T^\e_k}] + C \mathbb{E}_{{q^\e_{T^\e_k}}}[\int_0^{T^\e_0\wedge \tau^\e_T} |A_i(x^\e_{T^\e_k}) - \mathcal{R}_0^* H \mathcal{R}_0 H (q^\e_s)|ds]^2 + C\bar{C}^4\e^4 h^4(\e) + 2\bar{C}^2 \e^2 h(\e)^2 \\
\leq&  C \mathbb{E}[ (\Delta_b^k)^4 | \mathcal{F}_{T^\e_k}] + C\mathbb{E}[\Delta_{\sigma}^k | \mathcal{F}_{T^\e_k} ]^2 + C\bar{C}^4\e^4 h^4(\e)+ C\bar{C}^2 \e^2 h(\e)^2  \\
\leq&  C^2 (\bar{C} \e)^4 \mathbb{E}[\sup_{0 \leq s \leq \bar{C} \e} |q^\e_s - \bar{X}^\e_s|^4] + C \bar{C}^2\e^2 \mathbb{E}[\sup_{0\leq s \leq \bar{C}\e}|q^\e_s - \bar{X}^\e_s|]^2+  2\bar{C}^4\e^4 h^4(\e)+ 2\bar{C}^2 \e^2 h(\e)^2  \\
\leq& C^4 (\bar{C} \e)^4 \mathbb{E}[\sup_{0 \leq s \leq \bar{C} \e} |q^\e_s - \bar{X}^\e_s|^4] + C \bar{C}^2\e^2 \mathbb{E}[\sup_{0 \leq s \leq \bar{C} \e} |q^\e_s - \bar{X}^\e_s|^4] ^{1/2}+ 2\bar{C}^4\e^4 h^4(\e)+ 2\bar{C}^2 \e^2 h(\e)^2.
\end{aligned}
\end{equation*}
Therefore, thanks to \eqref{eq closeness of trajectory}
\begin{equation*}
\mathbb{E}[\sup_{T^\e_k \leq t \leq T^\e_{k+1} \wedge \tau^\e_T} |\xi^\e_t- x^\e_t|^4 | \mathcal{F}_{T^\e_k}] \leq C ( (\bar{C} \e)^{5} + (\bar{C}\e)^{2.5}) + 2\bar{C}^4\e^4 h^4(\e)+ 2\bar{C}^2 \e^2 h^2(\e).
\end{equation*}
Finally, since there are at most $N:=[T/\underline{C}\e]+1$ $T_k$'s before time time $T$, we get
\begin{equation*}
\begin{aligned}
\mathbb{E}[\sup_{0 \leq t \leq \tau^\e_T} |\xi^\e_t- x^\e_t|^4] 
\leq&\mathbb{E}[\sum_{k=0}^{\infty} \sup_{T^\e_k\wedge \tau^\e_T \leq t \leq T^\e_{k+1}\wedge \tau^\e_T} |\xi^\e_t- x^\e_t|^4] \\
\leq&\mathbb{E}[\sum_{k=0}^{N}\sup_{T^\e_k \leq t \leq T^\e_{k+1}\wedge \tau^\e_T} |\xi^\e_t- x^\e_t|^4] + \mathbb{E}[\sup_{T^\e_N\wedge \tau^\e_T \leq t \leq T^\e_{N+1}\wedge \tau^\e_T} |\xi^\e_t- x^\e_t|^4| \mathcal{F}_{T_N}]]\\
\leq& N (C ( (\bar{C} \e)^{5} + (\bar{C}\e)^{2.5}) + 2\bar{C}^4\e^4 h^4(\e)+ 2\bar{C}^2 \e^2 h^2(\e))\\
\leq&  C\frac{T}{\e} (C ( (\bar{C} \e)^{5} + (\bar{C}\e)^{2.5}) + 2\bar{C}^4\e^4 h^4(\e)+ 2\bar{C}^2 \e^2 h^2(\e)),\\
\end{aligned}
\end{equation*}
So that
\begin{equation*}
\mathbb{E}[\sup_{0 \leq t \leq \tau^\e_T} |\xi^\e_t- x^\e_t|^4]  \leq CT (\bar{C}^5\e^{4} + \bar{C}^{2.5} \e^{1.5} + 2\bar{C}^4\e^3 h^4(\e)+ 2\bar{C}^2 \e h^2(\e)).
\end{equation*}

Therefore, we have proved the following Lemma
\begin{Lemma}\label{weak convergence 1}
Under Hypothesis \ref{h1} and \ref{h2}, for any intitial condition $(x_0,i)$ on edge $I_i$ and $H_1 \leq x_0 \leq H_2$ such that $(H_1,i), (H_2,i) \in I_i$, we have
\begin{equation}\label{eq weak convergence 1}
\sup_{(x_0 ,i ) \in I_i}\mathbb{E}_{x_0}[\sup_{0\leq s \leq \tau^\e_{H_1,H_2} \wedge T } |\xi^\e_t- x^\e_t|^4] \leq  CT (\bar{C}^5\e^{4} + \bar{C}^{2.5} \e^{1.5} + 2\bar{C}^4\e^3 h^4(\e)+ 2\bar{C}^2 \e h^2(\e)).
\end{equation}
\end{Lemma}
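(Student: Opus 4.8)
The estimate is obtained by collecting the one-period bounds worked out in the previous subsections and summing over the $O(T/\e)$ periods of the fast flow. Recall that $0=T^\e_0<T^\e_1<\cdots$ with $\underline C\e\le T^\e_{k+1}-T^\e_k\le\bar C\e$ on $D_i(H_1,H_2)$, and that by construction $\xi^\e_{T^\e_k}=x^\e_{T^\e_k}$ for every $k$. Since $\xi^\e$ is reset to $x^\e$ at every $T^\e_k$, the discrepancy $\xi^\e_t-x^\e_t$ vanishes at the left endpoint of each period, so the running maximum over $[0,\tau^\e_T]$ is dominated by the sum of the per-period maxima:
\[
\sup_{0\le t\le\tau^\e_T}|\xi^\e_t-x^\e_t|^4\le\sum_{k\ge 0}\;\sup_{T^\e_k\wedge\tau^\e_T\le t\le T^\e_{k+1}\wedge\tau^\e_T}|\xi^\e_t-x^\e_t|^4,
\]
and since each period has length at least $\underline C\e$ the sum has at most $N=[T/(\underline C\e)]+1$ nonzero summands. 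Hence it suffices to bound $\mathbb{E}[\sup_{T^\e_k\le t\le T^\e_{k+1}\wedge\tau^\e_T}|\xi^\e_t-x^\e_t|^4\mid\mathcal{F}_{T^\e_k}]$ uniformly in $k$ and in the starting point, then multiply by $N$ and take expectations.

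On a single period I would split $\xi^\e_t-x^\e_t$ into three contributions: (i) the drift-averaging error $\Delta^k_b(t)$, the gap between $\int_{T^\e_k}^{t}\mathcal{L}_0H(q^\e_s)\,ds$ and $B_i(x^\e_{T^\e_k})(t-T^\e_k)$; (ii) the diffusion-averaging error, the gap between $A_i^{1/2}(x^\e_{T^\e_k})(\beta^\e_t-\beta^\e_{T^\e_k})$ and $\int_{T^\e_k}^{t}\mathcal{R}_0H(q^\e_s)\,dW_s$, whose conditional quadratic variation is $\Delta^k_\sigma$; and (iii) the order-$\e$ terms $\int_{T^\e_k}^{t}\mathcal{L}_0^\e H(q^\e_s)\,ds$ and $\int_{T^\e_k}^{t}\mathcal{R}_0^\e H(q^\e_s)\,dW_s$. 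For (iii), the period length $O(\e)$ together with $h(\e)=\sup_x\max\{|b^\e(x)|,|\sigma^\e(x)|\}\to 0$ gives a fourth-moment contribution of order $\e^4h^4(\e)$ from the Lebesgue integral and, by the martingale maximal inequality, of order $\e^2h^2(\e)$ from the stochastic integral. For (i) and (ii) the key input is the one-window closeness estimate \eqref{eq closeness of trajectory}: over a time window of length $\bar C\e$, $q^\e$ stays $L^4$-sup-close to the frozen, noise-free fast trajectory $\bar X^{\e,k}$ started at $q^\e_{T^\e_k}$, the Gronwall blow-up factor $\exp(C(\bar C\e)^4/\e^4)$ being a harmless constant, so $\mathbb{E}_{q^\e_{T^\e_k}}[\sup_{0\le s\le\bar C\e}|q^\e_s-\bar X^\e_s|^4]=O(\e)$. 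Using the averaging identities $B_i(x^\e_{T^\e_k})(T^\e_{k+1}-T^\e_k)=\int_{T^\e_k}^{T^\e_{k+1}}\mathcal{L}_0H(\bar X^{\e,k}_s)\,ds$ and its analogue for $A_i$, together with the Lipschitz continuity of $\mathcal{L}_0H$, $\mathcal{R}_0^*H\,\mathcal{R}_0H$, $A_i$, $B_i$ on $D_i(H_1,H_2)$ established earlier, the trajectory closeness converts into $\mathbb{E}[(\Delta^k_b)^4\mid\mathcal{F}_{T^\e_k}]\le C\e^5$ and $\mathbb{E}[\Delta^k_\sigma\mid\mathcal{F}_{T^\e_k}]^2\le C\e^{2.5}$, Kolmogorov's inequality turning the martingale part of (ii) into its quadratic variation $\Delta^k_\sigma$.

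Collecting these, each period contributes at most $C(\e^5+\e^{2.5}+\e^4h^4(\e)+\e^2h^2(\e))$, and summing over the $N=O(T/\e)$ periods produces exactly $CT(\bar C^5\e^4+\bar C^{2.5}\e^{1.5}+\bar C^4\e^3h^4(\e)+\bar C^2\e h^2(\e))$. Uniformity over $x_0\in[H_1,H_2]$ is automatic: every constant above depends only on $H_1,H_2$ and $I_i$, through the uniform bounds and Lipschitz constants of the coefficients restricted to $D_i(H_1,H_2)$ and through $\underline C,\bar C$, and the a priori moment bound \eqref{eq3} is itself uniform in $x_0$ on compact sets.

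The step I expect to require the most care is the bookkeeping at the boundary periods: when the current period is cut short by $\tau^\e_T$ the tail terms $\int_t^{T^\e_{k+1}}\mathcal{L}_0H(\bar X^{\e,k}_s)\,ds$ and $\tfrac{T^\e_{k+1}-t}{T^\e_{k+1}-T^\e_k}\int_{T^\e_k}^{T^\e_{k+1}}\mathcal{L}_0H(\bar X^{\e,k}_s)\,ds$ no longer cancel, so the clean per-period bound holds for every $k$ except at most one, which must be estimated separately; since such exceptional periods are bounded in number and each contributes only $O(\e)$-type terms, they are absorbed into the final bound. One must also justify replacing $\mathbb{E}[\,\cdot\mid\mathcal{F}_{T^\e_k}]$ by $\mathbb{E}_{q^\e_{T^\e_k}}[\,\cdot\,]$ via the strong Markov property at the stopping times $T^\e_k$, and check that the stochastic-integral increments over $[T^\e_k,T^\e_{k+1}]$ are conditionally centered so that summing the fourth moments of the conditional expectations (rather than of the increments themselves) is legitimate. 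The averaging and Lipschitz estimates underlying (i) and (ii) are, by contrast, routine once Lemma \ref{l15} and \eqref{eq closeness of trajectory} are available.
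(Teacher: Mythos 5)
Your proposal matches the paper's argument essentially step for step: split $\xi^\e_t-x^\e_t$ on each period $[T^\e_k,T^\e_{k+1}]$ into the drift-averaging error $\Delta^k_b$, the diffusion-averaging error with quadratic variation $\Delta^k_\sigma$, and the order-$\e$ contributions from $\mathcal{L}^\e_0 H$ and $\mathcal{R}^\e_0 H$; feed in the one-window estimate \eqref{eq closeness of trajectory} (where the Gronwall exponential is bounded because $t=O(\e)$) together with the averaging identities and the Lipschitz bounds on $\mathcal{L}_0 H$, $\mathcal{R}_0^*H\,\mathcal{R}_0H$, $A_i$, $B_i$; obtain per-period bounds of order $\e^5$, $\e^{2.5}$, $\e^4h^4(\e)$, $\e^2h^2(\e)$ via Doob/Kolmogorov; and sum over $N=O(T/\e)$ periods. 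Your remark that the sup over $[0,\tau^\e_T]$ is dominated by the sum of per-period sups, and that this is harmless precisely because $\xi^\e$ is reset to $x^\e$ at each $T^\e_k$, is exactly how the paper proceeds, and your flag about the one exceptional (truncated) period in which the tail terms of $\Delta^k_b$ do not cancel also reflects the paper's caveat ``for every $k$ except one.'' The only inessential aside is your worry about conditional centering of the stochastic-integral increments: since one is simply summing nonnegative per-period suprema and passing to conditional expectations by the tower property and the strong Markov property at $T^\e_k$, no centering argument is needed.
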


\subsection{Closeness of $\xi^\e_t$ and $\tilde{x}^\e_t$}
Let $|\xi^\e_{T^\e_k}-\tilde{x}^\e_{T^\e_k}|= \delta_{k}$. For $t\in [T^\e_{k-1}, T^\e_{k})$, we have
\begin{equation*}
\begin{aligned}
|\xi^\e_t- \tilde{x}^\e_t| \leq& |\xi^\e_{T^\e_{k-1}}-\tilde{x}^\e_{T^\e_{k-1}}| + \int_{T^\e_{k-1}}^{t} |B_i(x^\e_{T^\e_{k-1}})- B_i(\tilde{x}^\e_s)| ds + |\int_{T^\e_{k-1}}^t (A_i(x^\e_{T^\e_{k-1}}) - A_i (\tilde{x}^\e_s)) d\beta^\e_s|\\
\leq& \delta_{k-1} + C \int_{T^\e_{k-1}}^t |x^\e_{T^\e_{k-1}} - \tilde{x}^\e_s| ds + |\int_{T^\e_{k-1}}^t (A_i(x^\e_{T^\e_{k-1}}) - A_i (\tilde{x}^\e_s)) d\beta^\e_s|\\
\leq& \delta_{k-1} + C \int_{T^\e_{k-1}}^t |\xi^\e_{T^\e_{k-1}}-\xi^\e_s|ds + C \int_{T^\e_{k-1}}^t  |\xi^\e_s-\tilde{x}^\e_s| ds + |\int_{T^\e_{k-1}}^t (A_i(\xi^\e_{T^\e_{k-1}}) - A_i (\xi^\e_s)) d\beta^\e_s|\\
&+  |\int_{T^\e_{k-1}}^t (A_i(\xi^\e_{s}) - A_i (\tilde{x}^\e_s)) d\beta^\e_s|.
\end{aligned}
\end{equation*}
Let $\beta^{\e,s}_t= \beta^\e_{t+s}-\beta^\e_s$. For $s\in [T^\e_{k-1}, T^\e_k)$, 
\begin{equation*}
|\xi^\e_{T^\e_{k-1}}-\xi^\e_s| \leq |B_i(x^\e_{T^\e_{k-1}}) (s-T^\e_{k-1})| + |A_i^{\frac{1}{2}}(x^\e_{T^\e_{k-1}})(\beta^\e_s- \beta^\e_{T^\e_{k-1}})|  \leq C|s-T^\e_{k-1}| + C|\beta_{s-T^\e_{k-1}}^{\e,{T^\e_{k-1}}}|.
\end{equation*}
Since $T^\e_k-T^\e_{k-1} \leq \bar{C}\e$, we define $\bar{\xi}^{\e}_s$ to be the extension of $\xi^{\e}_s$ in the interval $ [T^\e_{k-1}, T^\e_{k-1}+\bar{C}\e]$ such that $\bar{\xi^\e_{s}}= \xi^\e_s$, $s\in [T^\e_{k-1}, T^\e_k)$ and $\bar{\xi}^\e_s=0$, $T^\e_k \leq s \leq T^\e_{k-1}+\bar{C}\e$. The process $\bar{\tilde{x}}^\e_s$ is defined similarly. Then for $T^\e_{k-1}< t_0<T^\e_{k-1} + \bar{C}\e$, 
\begin{equation*}
\sup_{T^\e_{k-1} \leq s \leq T^\e_k\wedge t_0}|\xi^\e_s-\tilde{x}^\e_s| = \sup_{T^\e_{k-1} \leq s \leq t_0} |\bar{\xi}^\e_s-\bar{\tilde{x}}^\e_s|,
\end{equation*}
and
\begin{equation*}
\begin{aligned}
     &\sup_{T^\e_{k-1}\leq s \leq t_0 \wedge T^\e_k} |\xi^\e_s - \tilde{x}^\e_s| \\
\leq& \delta_{k-1} + C\int_{T^\e_{k-1}}^{T^\e_{k}\wedge t_0} (|s-T^\e_{k-1}| + |\beta_{s-T^\e_{k-1}}^{\e,{T^\e_{k-1}}}|) ds + C\int_{T^\e_{k-1}}^{T^\e_k \wedge t_0} \sup_{0\leq u \leq s\wedge T^\e_k} |\xi^\e_u- \tilde{x}^\e_u| ds \\
&+|\int_{T^\e_{k-1}}^{T^\e_k \wedge t_0} (A_i(\xi^\e_{T^\e_{k-1}}) - A_i (\xi^\e_s)) d\beta^{\e,T^\e_{k-1}}_s| + |\int_{T^\e_{k-1}}^{T^\e_k \wedge t_0} (A_i(\xi^\e_{s}) - A_i (\tilde{x}^\e_s)) d\beta^{\e,T^\e_{k-1}}_s|.
\end{aligned}
\end{equation*}
So
\begin{equation*}
\begin{aligned}
&\mathbb{E}[\sup_{T^\e_{k-1}\leq s \leq t_0 \wedge T^\e_k} |\xi^\e_s-\tilde{x}^\e_s|^4 | \mathcal{F}_{T^\e_{k-1}}] \\
\leq& c \delta_{k-1}^4 +c \mathbb{E}_{q^\e_{T^\e_{k-1}}}[|\int_0^{T^\e_0} s ds|^4 ] + c \mathbb{E}_{q^\e_{T^\e_{k-1}}}[|\int_0^{T^\e_0}|\beta_s^{\e,{T^\e_{k-1}}}|ds|^4 ] \\ 
&+c \mathbb{E}_{q^\e_{T^\e_{k-1}}}[|\int_0^{T^\e_0 \wedge \tau} \sup_{0\leq u \leq s\wedge{T^\e_0}} |\xi^\e_u-x_u|ds|^4  ] + c \mathbb{E}_{q^\e_{T^\e_{k-1}}}[ |\int_{0}^{T^\e_0 \wedge t_0} (A_i(\xi^\e_{T^\e_{k-1}}) - A_i (\xi^\e_s)) d\beta^\e_s|^4  ] \\
&+ c \mathbb{E}_{q^\e_{T^\e_{k-1}}}[|\int_{0}^{T^\e_0 \wedge t_0} (A_i(\xi^\e_{s}) - A_i (\tilde{x}^\e_s)) d\beta^\e_s|^4].\\
\end{aligned}
\end{equation*}
Let $r= t_0-T^\e_{k-1}$, then $r \leq \bar{C}\e$ and
\begin{equation*}
\begin{aligned}
 &\mathbb{E}[\sup_{T^\e_{k-1}\leq s \leq t_0 \wedge T^\e_k} |\xi^\e_s-\tilde{x}^\e_s|^4 | \mathcal{F}_{T^\e_{k-1}}] =\mathbb{E}[\sup_{T^\e_{k-1}\leq s \leq t_0 } |\bar{\xi}^\e_s-\bar{\tilde{x}}^\e_s|^4 | \mathcal{F}_{T^\e_{k-1}}]\\
\leq&  \d_{k-1}^4 + (\bar{C} \e)^8 + \bar{C}^4 \e^6  +   \mathbb{E}_{q^\e_{T^\e_{k-1}}}[|\int_0^{r} \sup_{0\leq u \leq s} |\bar \xi^\e_u-\bar{\tilde{x}}^\e_u|ds|^4 ]\\
&  + c \mathbb{E}_{q^\e_{T^\e_{k-1}}}[ |\int_{0}^{T^\e_0 \wedge r} (A_i(\bar{\xi}^\e_{T^\e_{k-1}}) - A_i (\bar{\xi}^\e_s)) d\beta^{\e,T^\e_{k-1}}_s|^4  ] + c \mathbb{E}_{q^\e_{T^\e_{k-1}}}[|\int_{0}^{T^\e_0 \wedge r} (A_i(\bar{\xi}^\e_{s}) - A_i (\bar{\tilde{x}}^\e_s)) d\beta^{\e,T^\e_{k-1}}_s|^4]\\
\leq& \d_{k-1}^4 + (\bar{C} \e)^8 + \bar{C}^4 \e^6  +   \mathbb{E}_{q^\e_{T^\e_{k-1}}}[|\int_0^{r} \sup_{0\leq u \leq s} |\bar \xi^\e_u-\bar{\tilde{x}}^\e_u|ds|^4 ]\\
&  + c \mathbb{E}_{q^\e_{T^\e_{k-1}}}[ |\int_{0}^{ r } (A_i(\bar{\xi}^\e_{T^\e_{k-1}}) - A_i (\bar{\xi}^\e_s)) d\beta^{\e,T^\e_{k-1}}_s|^4  ] + c \mathbb{E}_{q^\e_{T^\e_{k-1}}}[|\int_{0}^{ r} (A_i(\bar{\xi}^\e_{s}) - A_i (\bar{\tilde{x}}^\e_s)) d\beta^{\e,T^\e_{k-1}}_s|^4].\\
\end{aligned}
\end{equation*}
Since
\begin{equation*}
\begin{aligned}
\mathbb{E}[|\int_0^r (A_i(\bar{\xi}^\e_{T^\e_{k}}) - A_i (\bar{\xi}^\e_s)) d\beta^{\e}_s|^4] \leq& \bar{C}\e \mathbb{E}[\int_0^r |A_i(\bar{\xi}^\e_{T^\e_{k}}) - A_i (\bar{\xi}^\e_s)|^4 ds]\\
\leq& L^4 \bar{C}\e \mathbb{E}[\int_0^r |\bar{\xi}^\e_{T^\e_{k}} -\bar{\xi}^\e_s|^4 ds]\\
\leq& C \bar{C}\e \int_0^r \mathbb{E}[|\bar{\xi}^\e_{T^\e_{k}} -\bar{\xi}^\e_s|^4 ] ds\\
\leq& C \bar{C}\e \int_0^r \mathbb{E} [s^4 + |\beta^\e_s|^4]ds\\
\leq&C (\bar{C}\e)^4,
\end{aligned}
\end{equation*}
we get
\begin{equation*}
\mathbb{E}[|\int_0^r (A_i(\bar{\xi}^\e_{s}) - A_i (\bar{\tilde{x}}^\e_s)) d\beta^{\e}_s|^4] \leq \bar{C}\e \int_0^r \mathbb{E}[|\bar{\xi}^\e_{s} - \bar{\tilde{x}}^\e_s|^4]ds \leq \bar{C}\e \int_0^r \mathbb{E}[\sup_{0\leq u\leq s}|\bar{\xi}^\e_{u} - \bar{\tilde{x}}^\e_u|^4]ds 
\end{equation*}
and 
\begin{equation*}
\mathbb{E}[\sup_{T^\e_{k-1}\leq s \leq t_0 } |\bar{\xi}^\e_s-\bar{x}_s|^4 | \mathcal{F}_{T^\e_{k-1}}]  =
\mathbb{E}_{q^\e_{T^\e_{k-1}}}[\sup_{0\leq s \leq t } |\bar{\xi}^\e_s-\bar{x}_s|^4 ] ].
\end{equation*}
Therefore 
\begin{equation*}
\begin{aligned}
\mathbb{E}_{q^\e_{T^\e_{k-1}}}[\sup_{0\leq s \leq r } |\bar{\xi}^\e_s-\bar{\tilde{x}}^\e_s|^4 ] ]\leq& \d_{k-1}^4 + (\bar{C} \e)^8 + \bar{C}^4 \e^6  + (\bar{C} \e)^4 + ((\bar{C} \e)^4 + \bar{C} \e)\int_0^r \mathbb{E}_{q^\e_{T^\e_{k-1}}}[\sup_{0\leq u \leq s} |\bar \xi^\e_u-\bar{\tilde{ x}}^\e_u|^4 ].
\end{aligned}
\end{equation*}
If we apply Gronwall's inequality, we have
\begin{equation*}
\mathbb{E}_{q^\e_{T^\e_{k-1}}}[\sup_{0\leq s \leq r } |\bar{\xi}^\e_s-\bar{\tilde{x}}^\e_s|^4 ] ]  \leq
(\d_{k-1}^4 + (\bar{C} \e)^8 + \bar{C}^4 \e^6  + (\bar{C} \e)^4 ) \exp(((\bar{C} \e)^4 + \bar{C} \e)t_0)
\end{equation*}
and thus
\begin{equation*}
\mathbb{E}[\sup_{T^\e_{k-1}\leq s \leq t_0 } |{\xi}^\e_s-\tilde{x}^\e_s|^4 | \mathcal{F}_{T^\e_{k-1}}] ds \leq
(\d_{k-1}^4 + (\bar{C} \e)^8 + \bar{C}^4 \e^6  + (\bar{C} \e)^4 ) \exp((\bar{C} \e)^5 + (\bar{C} \e)^2)
\end{equation*}
All this implies,
\begin{equation*}
\begin{aligned}
&\mathbb{E}[\sup_{T^\e_{k-1}\wedge \tau^\e_T \leq s \leq T^\e_{k}\wedge \tau^\e_T } |\bar{\xi}^\e_s-\bar{\tilde{x}}^\e_s|^4 ]
=  \mathbb{E}[\mathbb{E}[\sup_{T^\e_{k-1}\wedge \tau^\e_T \leq s \leq T^\e_{k}\wedge \tau^\e_T } |\bar{\xi}^\e_s-\bar{\tilde{x}}^\e_s|^4 ] | \mathcal{F}_{T^\e_{k-1}}]\\
\leq& \mathbb{E}[(\d_{k-1}^4 + (\bar{C} \e)^8 + \bar{C}^4 \e^6 + (\bar{C} \e)^6 + (\bar{C} \e)^4 ) \exp((\bar{C} \e)^4 + (\bar{C} \e)^2)]\\
\leq& \mathbb{E}[\sup_{T^\e_{k-2}\wedge \tau^\e_T \leq s \leq T^\e_{k-1}\wedge \tau^\e_T } |\bar{\xi}^\e_s-\bar{x}_s|^4 ]]  \exp((\bar{C} \e)^4 + (\bar{C} \e)^2) \\&+ ( (\bar{C} \e)^8 + \bar{C}^4 \e^6 + (\bar{C} \e)^6 + (\bar{C} \e)^4 ) \exp((\bar{C} \e)^4 + (\bar{C} \e)^2)].\\
\end{aligned}
\end{equation*}
Now, let $a_k= \mathbb{E}[\sup_{T^\e_{k-1}\wedge \tau^\e_T \leq s \leq T^\e_{k}\wedge \tau^\e_T } |\bar{\xi}^\e_s-\bar{\tilde{x}}^\e_s|^4 ]] $. The calculation above implies the following recursive ,
\begin{equation*}
a_k \leq \exp(\bar{C}\e)(a_{k-1} + (\bar{C}\e)^4).
\end{equation*}
By induction, we can easily deduce that
\begin{equation*}
a_n \leq  (\bar{C}\e)^4 \sum_{k=1}^{n}  \exp(k\bar{C}\e) = (\bar{C}\e)^4 \frac{1-\exp((n+1)\bar{C}\e)}{1- \exp(\bar{C}\e)}\leq C \bar{C}^4\e^3.
\end{equation*}
Finally, let $N = [t/\underline{C}\e]+1$,
\begin{equation*}
\begin{aligned}
\mathbb{E}[\sup_{0\leq s \leq \tau^\e_T } |{\xi}^\e_s-\tilde{x}^\e_s|^4] 
\leq& \mathbb{E}[\sup_{k}\sup_{T^\e_{k-1}\wedge \tau^\e_T \leq s \leq T^\e_{k}\wedge \tau^\e_T } |\bar{\xi}^\e_s-\bar{\tilde{x}}^\e_s|^4 ]
\leq \mathbb{E}[\sum_{k=1}^{N}\sup_{T^\e_{k-1}\wedge \tau^\e_T \leq s \leq T^\e_{k}\wedge \tau^\e_T } |\bar{\xi}^\e_s-\bar{\tilde{x}}^\e_s|^4 ]\\
\leq& \sum_{k=1}^{N} \mathbb{E}[\sup_{T^\e_{k-1}\wedge \tau^\e_T \leq s \leq T^\e_{k}\wedge \tau^\e_T } |\bar{\xi}^\e_s-\bar{\tilde{x}}^\e_s|^4 ]
=\sum_{k=1}^{N} a_k \leq C T \bar{C}^4\e^2.
\end{aligned}
\end{equation*}
Therefore, we have proved the following Lemma
\begin{Lemma}\label{weak convergence 2}
Under Hypothesis \ref{h1} and \ref{h2}, for any intitial condition $(x_0,i)$ on edge $I_i$ and $H_1 \leq x_0 \leq H_2$ such that $(H_1,i), (H_2,i) \in I_i$, we have
\begin{equation}\label{eq weak convergence 2}
\sup_{(x_0 ,i ) \in I_i}\mathbb{E}_{x_0}[\sup_{0\leq s \leq \tau^\e_T } |{\xi}^\e_s-\tilde{x}^\e_s|^4] \leq C T \bar{C}^4\e^2.
\end{equation}
\end{Lemma}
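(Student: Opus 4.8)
The plan is to bound $|\xi^\e_s-\tilde x^\e_s|$ block by block over the partition $0=T^\e_0<T^\e_1<\cdots$, each block $[T^\e_{k-1},T^\e_k)$ having deterministic length at most $\bar{C}\e$, and to close a Gronwall-type recursion for the fourth moments $a_k:=\mathbb{E}\big[\sup_{T^\e_{k-1}\wedge\tau^\e_T\leq s\leq T^\e_k\wedge\tau^\e_T}|\xi^\e_s-\tilde x^\e_s|^4\big]$. Fix $k$ and work on $\{T^\e_{k-1}<\tau^\e_T\}$, since otherwise the relevant supremum vanishes. On $[T^\e_{k-1},T^\e_k)$ the process $\xi^\e$ has coefficients frozen at $x^\e_{T^\e_{k-1}}$, while $\tilde x^\e$ has coefficients $B_i(\tilde x^\e_s)$, $A_i^{1/2}(\tilde x^\e_s)$, both driven by the same $\beta^\e$. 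Subtracting, using the reset $\xi^\e_{T^\e_{k-1}}=x^\e_{T^\e_{k-1}}$ built into the definition of $\xi^\e$ to write $|x^\e_{T^\e_{k-1}}-\tilde x^\e_s|\leq|\xi^\e_{T^\e_{k-1}}-\xi^\e_s|+|\xi^\e_s-\tilde x^\e_s|$, and invoking the Lipschitz continuity of $A_i$ and $B_i$ on $D_i(H_1,H_2)$ established in the Lipschitz Continuity subsection, I would bound $|\xi^\e_t-\tilde x^\e_t|$ by $\delta_{k-1}:=|\xi^\e_{T^\e_{k-1}}-\tilde x^\e_{T^\e_{k-1}}|$ plus a drift contribution $C\int_{T^\e_{k-1}}^t\big(|\xi^\e_{T^\e_{k-1}}-\xi^\e_s|+|\xi^\e_s-\tilde x^\e_s|\big)\,ds$ and two stochastic integrals, $\int(A_i^{1/2}(\xi^\e_{T^\e_{k-1}})-A_i^{1/2}(\xi^\e_s))\,d\beta^\e_s$ and $\int(A_i^{1/2}(\xi^\e_s)-A_i^{1/2}(\tilde x^\e_s))\,d\beta^\e_s$.

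The key observation is that $\xi^\e$ barely moves inside a block: since $B_i$ and $A_i^{1/2}$ are bounded on $D_i(H_1,H_2)$, one has $|\xi^\e_{T^\e_{k-1}}-\xi^\e_s|\leq C|s-T^\e_{k-1}|+C|\beta^\e_s-\beta^\e_{T^\e_{k-1}}|$, whose contribution after integrating over a block of length $\leq\bar{C}\e$ and raising to the fourth power is $O(\e^4)$ in expectation, the $O(\sqrt\e)$ size of the Brownian increments being what makes this work (via Burkholder-Davis-Gundy, equivalently Kolmogorov's inequality). To handle the randomness of $T^\e_k$ I would, exactly as in the preceding subsections, extend $\xi^\e$ and $\tilde x^\e$ to the deterministic window $[T^\e_{k-1},T^\e_{k-1}+\bar{C}\e]$ by freezing them past $T^\e_k$ (so the block suprema are unchanged), then condition on $\mathcal{F}_{T^\e_{k-1}}$ and use the strong Markov property. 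Taking fourth moments, bounding the first stochastic integral by $O((\bar{C}\e)^4)$ via the oscillation estimate above and the second by $\bar{C}\e\int_0^r\mathbb{E}\big[\sup_{u\leq s}|\bar\xi^\e_u-\bar{\tilde x}^\e_u|^4\big]\,ds$, and applying Gronwall on the window of length $\leq\bar{C}\e$, I obtain
\[
\mathbb{E}\Big[\sup_{T^\e_{k-1}\wedge\tau^\e_T\leq s\leq T^\e_k\wedge\tau^\e_T}|\xi^\e_s-\tilde x^\e_s|^4\Big]\ \leq\ \big(\mathbb{E}[\delta_{k-1}^4]+C(\bar{C}\e)^4\big)\,e^{(\bar{C}\e)^2}.
\]
Since $\delta_{k-1}$ is dominated by $\xi^\e-\tilde x^\e$ at the end of the previous block, up to the $x^\e$--$\xi^\e$ discrepancy already controlled by Lemma~\ref{weak convergence 1}, this yields the recursion $a_k\leq e^{\bar{C}\e}(a_{k-1}+(\bar{C}\e)^4)$ with $a_0=0$.

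Iterating, $a_n\leq(\bar{C}\e)^4\sum_{k=1}^n e^{k\bar{C}\e}$; since there are at most $N=[T/\underline{C}\e]+1$ blocks before time $T$, the product $N\bar{C}\e$ stays bounded, so the geometric sum is $O((\bar{C}\e)^3)$ and $a_n\leq C\bar{C}^4\e^3$ for every $n\leq N$. Summing the block contributions,
\[
\mathbb{E}\Big[\sup_{0\leq s\leq\tau^\e_T}|\xi^\e_s-\tilde x^\e_s|^4\Big]\ \leq\ \sum_{k=1}^N a_k\ \leq\ N\cdot C\bar{C}^4\e^3\ \leq\ CT\bar{C}^4\e^2,
\]
which is the asserted bound; it is uniform over $(x_0,i)\in I_i$ because every inequality used only the uniform bounds on $b,b^\e,\sigma,\sigma^\e,A_i,A_i^{1/2},B_i$ over $D_i(H_1,H_2)$ and the two-sided period estimate $\underline{C}\e\leq T^\e(z)\leq\bar{C}\e$.

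The step I expect to be the main obstacle is getting the \emph{right power of $\e$}. There are $\asymp 1/\e$ blocks, so to reach $\e^2$ the per-block additive error must be genuinely $O(\e^4)$ and the per-block multiplicative factor only $1+O(\e)$; a crude estimate of any of the four contributions (the drift mismatch, the two stochastic integrals, or the block-start error $\delta_{k-1}$) would ruin the rate. Securing this forces one to exploit, rather than throw away, the cancellation between the frozen and the current coefficients (Lipschitz continuity of $A_i,B_i$ together with the reset $\xi^\e_{T^\e_k}=x^\e_{T^\e_k}$) and the smallness of the Brownian increments over a block. A secondary technical nuisance is the bookkeeping caused by the random block endpoints $T^\e_k$, which is handled by the deterministic-window extensions $\bar\xi^\e,\bar{\tilde x}^\e$ and the strong Markov property, and by absorbing the endpoint $x^\e$--$\xi^\e$ gap into Lemma~\ref{weak convergence 1}.
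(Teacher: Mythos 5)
Your proposal follows the paper's proof essentially step for step: the same block decomposition over $[T^\e_{k-1},T^\e_k)$, the same splitting into drift mismatch and two stochastic integrals, the same within-block oscillation bound for $\xi^\e$, the same deterministic-window extensions $\bar\xi^\e,\bar{\tilde x}^\e$ to handle the random block lengths, Gronwall on the window, the recursion $a_k\leq e^{\bar C\e}(a_{k-1}+(\bar C\e)^4)$, and the final sum over $N\asymp T/(\underline{C}\e)$ blocks. One small clarification: your remark that closing the recursion requires invoking Lemma~\ref{weak convergence 1} is unnecessary and would in fact be dangerous if taken literally (a per-block additive correction of order $\e^{2.5}$ from that lemma would destroy the $\e^2$ rate); because $\xi^\e_{T^\e_{k-1}}$ is \emph{defined} to equal $x^\e_{T^\e_{k-1}}$, the quantity $\delta_{k-1}$ is exactly the value of $|\xi^\e_s-\tilde x^\e_s|$ at the right endpoint of the previous closed block, so the recursion closes with no extra error, which is exactly what the clean recursion you write down encodes.
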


\subsection{Proof of Lemma \ref{convergence in edge weaker version}}
By Ito's formula, for any $\e>0$,
\begin{equation*}
\mathbb{E}_{x_0}[e^{-\la \tau^\e_T}f(x_{\tau^\e_T}) - \int_0^{+\infty} e^{-\la s}(-\la I + \mathcal{L}_i)[f](x_{s}) \mathcal{X}_{\tau^\e_T>s} ds ] = f(x_0).
\end{equation*}
Since $\tilde{x}^\e_t$ and $x_t$ have the same distribution,
\begin{equation*}
\mathbb{E}_{x_0}[e^{-\la\tau^\e_T}f(\tilde{x}^\e_{\tau^\e_T}) - \int_0^{+\infty} e^{-\la s}(-\la I + \mathcal{L}_i)[f](\tilde{x}^\e_{s})\mathcal{X}_{\tau^\e_T>s} ds ] = f(x_0).
\end{equation*}
Moreover,
\begin{equation*}
\begin{aligned}
&\mathbb{E}_{x_0}[e^{-\la \tau^\e_T}(f(x^\e_{\tau^\e_T}) - f(\tilde{x}^\e_{\tau^\e_T}))]^2 \\
\leq& \mathbb{E}_{x_0} [|f(x^\e_{\tau^\e_T}) - f(\tilde{x}^\e_{\tau^\e_T}))|^2] \leq |f|_{Lip}^2\mathbb{E}_{x_0} [ |x^\e_{\tau^\e_T} - \tilde{x}^\e_{\tau^\e_T}|^2]\\
\leq& |f|_{Lip}^2(\mathbb{E}_{x_0} [ |\xi_{\tau^\e_T} - \tilde{x}^\e_{\tau^\e_T}|^4]^{\frac{1}{2}}+\mathbb{E}_{x_0} [ |x^\e_{\tau^\e_T} - \xi^\e_{\tau^\e_T}|^4]^{\frac{1}{2}})\\
\leq& |f|_{Lip}^2(\mathbb{E}_{x_0} [ \sup_{0 \leq t \leq \tau^\e_T}|\xi_{t} - \tilde{x}^\e_{t}|^4]^{\frac{1}{2}}+\mathbb{E}_{x_0} [  \sup_{0 \leq t \leq\tau^\e_T}|x^\e_{t} - \xi^\e_{t}|^4]^{\frac{1}{2}}).\\
\end{aligned}
\end{equation*}
Apply Lemma \ref{weak convergence 1} and \ref{weak convergence 2}, $\mathbb{E}_{x_0}[e^{-\la \tau^\e_T}(f(x^\e_{\tau^\e_T}) - f(\tilde{x}^\e_{\tau^\e_T}))]$ converges to 0 uniformly with respect to $x_0$. And
\begin{equation*}
\begin{aligned}
&\mathbb{E}_{x_0}[|\int_0^{\tau^\e_T} e^{-\la s}((-\la I + \mathcal{L}_i)[f](x^\e_{s}) - (-\la I + \mathcal{L}_i)[f](\tilde{x}^\e_{s}))ds |]^2\\
\leq& \mathbb{E}_{x_0}[\int_0^{\tau^\e_T} e^{-\la s}C_f |x^\e_s- \tilde{x}^\e_s| ds]^2 \leq C_f^2 T^2 \mathbb{E}_{x_0}[\sup_{0 \leq t \leq \tau^\e_T}|x^\e_{t} - \tilde{x}^\e_{t}|^2])\\
\leq& C_f^2 T^2 (\mathbb{E}_{x_0} [ \sup_{0 \leq t \leq \tau^\e_T}|\xi_{t} - \tilde{x}^\e_{t}|^4]^{\frac{1}{2}}+\mathbb{E}_{x_0} [  \sup_{0 \leq t \leq\tau^\e_T}|x^\e_{t} - \xi^\e_{t}|^4]^{\frac{1}{2}}).\\
\end{aligned}
\end{equation*}
Similarly, $\mathbb{E}_{x_0}[|\int_0^{\tau^\e_T} e^{-\la s}((-\la I + \mathcal{L}_i)[f](x^\e_{s}) - (-\la I + \mathcal{L}_i)[f](\tilde{x}^\e_{s}))ds |]$ converges to 0 uniformly with respect to $x_0$. So
\begin{equation*}
\begin{aligned}
&\lim_{\e \to 0}\mathbb{E}_{x_0}[e^{-\la \tau^\e_T}f(x_{\tau^\e_T}) - \int_0^{\tau^\e_T} e^{-\la s}(-\la I + \mathcal{L}_i)[f](x_{s}) ds ] \\
=&\lim_{\e \to 0} \mathbb{E}_{x_0}[e^{-\la\tau^\e_T}f(\tilde{x}^\e_{\tau^\e_T}) - \int_0^{\tau^\e_T} e^{-\la s}(-\la I + \mathcal{L}_i)[f](\tilde{x}^\e_{s}) ds ] =f(x_0).
\end{aligned}
\end{equation*}

\subsection{Proof of Lemma \ref{convergence in edge}}
We start with the following Lemma
\begin{Lemma}
Under Hypothesis \ref{h1} and \ref{h2}, and let the stopping time $\tau^\e_{H_1,H_2}$ be defined as before, where $H_1$, $H_2$ are in the interior of $H(I_i)$, then
\begin{equation*}
\sup_{\e>0} \sup_{ z \in D_i([H_1,H_2])}\mathbb{E}_z[\tau^\e_{H_1,H_2}] < \infty.
\end{equation*}
\end{Lemma}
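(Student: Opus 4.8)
The plan is to produce, uniformly for small $\e$, a bounded nonnegative Lyapunov function on $\overline{D_i([H_1,H_2])}$ that is annihilated by the fast drift. Since $[H_1,H_2]$ lies in the interior of $H(I_i)$, the set $\overline{D_i([H_1,H_2])}$ is compact (by the coercivity in Hypothesis \ref{h2}) and contains no critical point of $H$, so $|\nabla H|\ge c_0>0$ there, and on it $\mathcal{L}^\e H$ and $(\nabla H)^t(\sigma+\sigma^\e)(\sigma+\sigma^\e)^*\nabla H$ are bounded uniformly for $\e$ near $0$. The structural observation is that for $V(z)=\phi(H(z))$ one has $\tfrac1\e g(z)\cdot\nabla V(z)=\tfrac1\e\phi'(H(z))\,g(z)\cdot\nabla H(z)=0$ by \eqref{Hamiltonian}, so the full generator of $q^\e$ acts on $V$ by
\[
\mathcal{L}^\e V(z)=\phi'(H(z))\,\mathcal{L}^\e H(z)+\tfrac12\phi''(H(z))\,(\nabla H)^t(\sigma+\sigma^\e)(\sigma+\sigma^\e)^*\nabla H(z),
\]
with no singular $\e^{-1}$ term.

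First I would take $\phi(x)=C\bigl(1-e^{-\lambda(x-H_1)}\bigr)$ on a neighbourhood of $[H_1,H_2]$: then $\phi\ge0$ and bounded on $[H_1,H_2]$, $\phi'=C\lambda e^{-\lambda(x-H_1)}>0$, $\phi''=-C\lambda^2 e^{-\lambda(x-H_1)}<0$. Using $|\mathcal{L}^\e H|\le M$ and $(\nabla H)^t(\sigma+\sigma^\e)(\sigma+\sigma^\e)^*\nabla H\ge a_0>0$ on $\overline{D_i([H_1,H_2])}$ gives $\mathcal{L}^\e V(z)\le C\lambda e^{-\lambda(H(z)-H_1)}\bigl(M-\tfrac{\lambda a_0}{2}\bigr)$; choosing $\lambda$ large so the bracket is $\le-\tfrac14\lambda a_0$ and then $C$ large yields $\mathcal{L}^\e V\le-1$ on $D_i(H_1,H_2)$. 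Then, by Dynkin's formula and $\tfrac1\e g\cdot\nabla V\equiv0$,
\[
\mathbb{E}_z\bigl[V(q^\e_{t\wedge\tau^\e_{H_1,H_2}})\bigr]=V(z)+\mathbb{E}_z\!\int_0^{t\wedge\tau^\e_{H_1,H_2}}\!\!\mathcal{L}^\e V(q^\e_s)\,ds\le V(z)-\mathbb{E}_z\bigl[t\wedge\tau^\e_{H_1,H_2}\bigr],
\]
hence $\mathbb{E}_z[t\wedge\tau^\e_{H_1,H_2}]\le V(z)\le\|V\|_\infty$, uniformly in $z\in\overline{D_i([H_1,H_2])}$, $t>0$ and small $\e$; letting $t\to\infty$ by monotone convergence gives $\sup_z\mathbb{E}_z[\tau^\e_{H_1,H_2}]\le\|V\|_\infty$. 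The same construction, using the compact-set bounds on $b^\e,\sigma^\e$ from Hypothesis \ref{h1}, covers the remaining range of $\e$, which is in any case irrelevant for Theorem \ref{main}.

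The one delicate point — and what I expect to be the main obstacle — is the pointwise lower bound $(\nabla H)^t\sigma\sigma^*\nabla H\ge a_0>0$ on $\overline{D_i([H_1,H_2])}$. If $\sigma$ is non-degenerate on the region this is immediate from $|\nabla H|\ge c_0$. If one only knows that the orbit-average $A_i(x)=\frac1{T_i(x)}\oint_{C_i(x)}(\nabla H)^t\sigma\sigma^*\nabla H\,\frac{dl}{|g|}$ is positive (which is already needed for the generalized differential operator of Theorem \ref{t8} to be defined), then $\mathcal{L}^\e V\le-1$ may fail pointwise and one must correct: take $\psi$ solving $\mathcal{L}_i\psi=-2$ on $[H_1,H_2]$ with $\psi(H_1)=\psi(H_2)=0$ (so $\psi\ge0$, bounded, using $A_i\ge a_0>0$ and the Lipschitz bounds on $A_i,B_i$), and set $V=\psi\circ H+\chi^\e$, where $\chi^\e=O(\e)$ solves along each periodic orbit of $g$ the transport equation $g\cdot\nabla\chi^\e=-\e\bigl(\psi'(H)[\mathcal{L}_0H-B_i(H)]+\tfrac12\psi''(H)[(\nabla H)^t\sigma\sigma^*\nabla H-A_i(H)]-(\text{its orbit-average})\bigr)$; the right-hand side has zero orbit-average by the very definitions of $B_i,A_i,T_i$, so $\chi^\e$ is well defined and bounded, and $\mathcal{L}^\e(\psi\circ H+\chi^\e)=-2+o(1)\le-1$ for small $\e$, after which the Dynkin argument above applies verbatim. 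Establishing the regularity and uniform boundedness of $\chi^\e$ — which rests on the smoothness of the period $T_i$ and of the orbit structure away from critical points, exactly as in the Lipschitz-continuity lemma — is the technical heart of this alternative route.
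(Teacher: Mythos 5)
You correctly identify the weak spot in the exponential-Lyapunov construction: Hypothesis \ref{h2} only asserts invertibility of $\sigma$ in neighborhoods of critical points, not on all of $\overline{D_i([H_1,H_2])}$, so the pointwise lower bound $(\nabla H)^t\sigma\sigma^*\nabla H\ge a_0>0$ needed to force $\mathcal{L}^\e V\le -1$ is not available; only the orbit average $A_i\ge a_0>0$ is. Your fallback corrector route does repair this and is sound in principle, but it amounts to re-deriving the period-average cancellation that Section 4 has already built into the process $\xi^\e$: the transport equation for $\chi^\e$ along orbits of $g$ is exactly the mechanism encoded in the closeness estimates of Lemmas \ref{weak convergence 1} and \ref{weak convergence 2}.

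The paper's own proof is shorter because it reuses that machinery rather than rebuilding it. It solves the same one-dimensional Dirichlet problem $\mathcal{L}_i f=-1$, $f(H_1)=f(H_2)=0$ (your $\psi$), bounds $\|f\|_\infty$ by the elliptic maximum principle (uniform ellipticity of $\mathcal{L}_i$ on $[H_1,H_2]$ is precisely $A_i\ge a_0>0$, so no pointwise non-degeneracy of $\sigma$ is needed), and then invokes the already-established Lemma \ref{convergence in edge weaker version} with $\tau^\e_T=\tau^\e_{H_1,H_2}\wedge T$ to obtain, in the limit $\e\to0$,
\[
\mathbb{E}_z[\tau^\e_T]\ \approx\ f(x_0)-\mathbb{E}_z\bigl[f(x^\e_{\tau^\e_T})\bigr]\ \le\ 2\|f\|_\infty,
\]
with the bound uniform in $T$, and finishes with Fatou as $T\to\infty$. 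So the ideas overlap (Poisson problem for $\mathcal{L}_i$, boundedness of the solution, Dynkin/optional stopping), but where you insert a hand-built corrector, the paper inserts the averaged-convergence lemma. Note also that both the paper's argument and your corrector argument bound $\mathbb{E}_z[\tau^\e_{H_1,H_2}]$ only for $\e$ small enough; the statement's $\sup_{\e>0}$ is closed by the cheap compact-$\e$-range observation, which you rightly flag as harmless.
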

\begin{proof}
Let $f$ satisfies $\mathcal{L}_i [f]=-1$ and the boundary conditions $f(H_1)=f(H_2)=0$. Then apply Lemma \ref{convergence in edge} with $f$ and have
\begin{equation*}
\mathbb{E}[\tau^\e_T]=f(x_0)- \mathbb{E}[f(x^\e_{\tau^\e_T})]
\end{equation*}
Since the second order differential operator $\mathcal{L}_i$ is uniformly elliptic on $[H_1, H_2]$, we can apply the maximum principle, and we get
\begin{equation*}
|f|_{\infty} \leq C, 
\end{equation*}
where $C$ is a constant depending on the coefficients in $\mathcal{L}_i$ and $[H_1,H_2]$. In particular, it is independent of $T$. Therefore, by Fatou's Lemma
\begin{equation*}
\mathbb{E}_z[\tau^\e_{H_1,H_2}] = \mathbb{E}_z[ \lim_{T \to \infty}\tau^\e_T] \leq   \liminf_{T \to \infty} \mathbb{E}[\tau^\e_T] \leq 2C < \infty.
\end{equation*}
Since this inequality holds for all $z \in D_i([H_1,H_2])$ and $\e >0$.
\end{proof}

\begin{Lemma}\label{uniform weak convergence 1}
Under Hypothesis \ref{h1} and \ref{h2}, for any intitial condition $(x_0,i)$ on the edge $I_i$ and $H_1 \leq x_0 \leq H_2$ such that $(H_1,i), (H_2,i) $ are inside the interior of $I_i$, we have
\begin{equation*}
\lim_{\e \to 0}\sup_{(x_0 ,i ) \in I_i}\mathbb{E}_{x_0}[\sup_{0\leq s \leq \tau^\e_{H_1,H_2} } |\xi^\e_t- x^\e_t|^2]=0.
\end{equation*}
\end{Lemma}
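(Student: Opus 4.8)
The statement is simply the "un-truncated" version of Lemma \ref{weak convergence 1}: the deterministic horizon $T$ is removed, at the cost that the number of periods $T^\e_k$ occurring before the exit time $\tau^\e_{H_1,H_2}$ becomes random. The plan is to re-run the summation from the proof of Lemma \ref{weak convergence 1}, replacing the deterministic count $N=[T/\underline C\e]+1$ by the random count
\[
\mathcal N^\e:=\#\{k\ge 0:\ T^\e_k<\tau^\e_{H_1,H_2}\},
\]
using the a priori bound just established. Since each period has length at least $\underline C\e$ we have $T^\e_k\ge k\,\underline C\e$, hence $\mathcal N^\e\le \tau^\e_{H_1,H_2}/(\underline C\e)+1$, and the preceding lemma gives
\[
\sup_{\e>0}\ \sup_{x_0\in[H_1,H_2]}\mathbb{E}_{x_0}[\mathcal N^\e]\ \le\ \frac{1}{\underline C\,\e}\Big(\sup_{\e>0}\ \sup_{z\in D_i([H_1,H_2])}\mathbb{E}_z[\tau^\e_{H_1,H_2}]\Big)+1\ =:\ \frac{M}{\underline C\,\e}+1 .
\]
Moreover, since $\tau^\e_{H_1,H_2}$ and each $T^\e_k$ are stopping times, $\{T^\e_k<\tau^\e_{H_1,H_2}\}=\{\tau^\e_{H_1,H_2}\le T^\e_k\}^c\in\mathcal F_{T^\e_k}$.

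Next I would invoke, unchanged, the per-period estimate obtained in the subsection ``Closeness of $\xi^\e_t$ and $x^\e_t$'': for every $k$,
\[
\mathbb{E}\Big[\sup_{T^\e_k\wedge\tau^\e_{H_1,H_2}\le t\le T^\e_{k+1}\wedge\tau^\e_{H_1,H_2}}|\xi^\e_t-x^\e_t|^4\,\Big|\,\mathcal F_{T^\e_k}\Big]\ \le\ \rho(\e),
\]
where $\rho(\e):=C\big((\bar C\e)^{5}+(\bar C\e)^{2.5}\big)+2\bar C^4\e^4h^4(\e)+2\bar C^2\e^2h^2(\e)$, and this conditional expectation vanishes on $\{\tau^\e_{H_1,H_2}\le T^\e_k\}$. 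Multiplying by $\mathbf 1_{\{T^\e_k<\tau^\e_{H_1,H_2}\}}\in\mathcal F_{T^\e_k}$, taking expectations, and summing over $k$ (a Wald-type summation, legitimate because of the measurability just noted),
\[
\mathbb{E}_{x_0}\Big[\sup_{0\le s\le\tau^\e_{H_1,H_2}}|\xi^\e_s-x^\e_s|^4\Big]\ \le\ \sum_{k\ge 0}\mathbb{P}_{x_0}[T^\e_k<\tau^\e_{H_1,H_2}]\,\rho(\e)\ =\ \rho(\e)\,\mathbb{E}_{x_0}[\mathcal N^\e]\ \le\ \Big(\frac{M}{\underline C\,\e}+1\Big)\rho(\e).
\]
Each term of $\rho(\e)/\e$ carries a positive power of $\e$ (possibly times a power of $h(\e)\to 0$), so the right-hand side tends to $0$ uniformly in $x_0\in[H_1,H_2]$; Jensen's inequality then yields the same for the second moment, which is the claim. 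The single ``boundary'' period straddling $\tau^\e_{H_1,H_2}$ — the one carrying the extra terms that, as noted in the proof of Lemma \ref{weak convergence 1}, occur only once — contributes a term of the same (or smaller) order and is absorbed; and the Gronwall step invoked there only ever runs over a time interval of length $\le\bar C\e$, so its constants are unaffected by the removal of $T$.

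The only genuinely new point, and the sole place where the argument could fail, is the control of the random number of periods; this is exactly why the uniform bound $\sup_{\e>0}\sup_z\mathbb{E}_z[\tau^\e_{H_1,H_2}]<\infty$ was established first. Everything else is a verbatim re-reading of the pathwise estimates for $\xi^\e$ versus $x^\e$ already carried out. (One could alternatively split the expectation over $\{\tau^\e_{H_1,H_2}\le T\}$, where Lemma \ref{weak convergence 1} applies directly and tends to $0$ for each fixed $T$, and over $\{\tau^\e_{H_1,H_2}>T\}$, where Cauchy--Schwarz together with $\mathbb{P}[\tau^\e_{H_1,H_2}>T]\le M/T$ and the uniform-in-$\e$ fourth-moment bound above makes the contribution small after choosing $T$ large; but this detour still rests on the same a priori estimate.)
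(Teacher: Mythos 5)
Your proof is correct and takes essentially the same route as the paper's: decompose the supremum over the (now random) periods, invoke the per-period conditional bound from the subsection on the closeness of $\xi^\e$ and $x^\e$, use the measurability of $\{T^\e_k<\tau^\e_{H_1,H_2}\}$ with respect to $\mathcal F_{T^\e_k}$ to sum, and control the expected number of periods by the just-established uniform bound on $\mathbb{E}_z[\tau^\e_{H_1,H_2}]$. The only stylistic difference is that you work cleanly with fourth moments throughout and apply Jensen at the very end, whereas the paper's write-up somewhat inconsistently inserts the fourth-moment per-period constant directly into a second-moment inequality (a harmless sloppiness since both rates vanish); your version is the more careful of the two.
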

\begin{proof}
Unlike the its previous counterpart, the number of intervals here is random. Let $N$ be the largest $k$ such that $T^\e_k \leq \tau^\e_{H_1,H_2}$. For simplicity, we write $\tau^\e_{H_1,H_2}$ as $\tau^\e_T$, eliminate the edge coordinate $i$ and let $T_{-1}=0$ in this proof. Thanks to \eqref{eq weak convergence 1} and apply Fatou's Lemma and strong Markov property,
\begin{equation*}
\begin{aligned}
\mathbb{E}_{x_0}[\sup_{0 \leq t \leq \tau^\e_T} |\xi^\e_t- x^\e_t|^2] 
\leq&\mathbb{E}_{x_0}[\sum_{k=-1}^{\infty} \sup_{T_k\wedge \tau^\e_T \leq t \leq T_{k+1}\wedge \tau^\e_T} |\xi^\e_t- x^\e_t|^2]\\ \leq& \sum_{k=-1}^{\infty}\mathbb{E}_{x_0}[\sup_{T_k \leq t \leq T_{k+1}\wedge \tau^\e_T}|\xi^\e_t- x^\e_t|^2 \mathcal{X}_{k \leq N}]\\
\leq&\sum_{k=-1}^{\infty}\mathbb{E}_{x_0}[\mathbb{E}_{x_0}[\sup_{T_k \leq t \leq T_{k+1}\wedge \tau^\e_T}|\xi^\e_t- x^\e_t|^2 \mathcal{X}_{T_k \leq \tau^\e_T}|\mathcal{F}_{T_k}]]\\
=&\sum_{k=-1}^{\infty}\mathbb{E}_{x_0}[\mathbb{E}_{x_0}[\sup_{T_k \leq t \leq T_{k+1}\wedge \tau^\e_T}|\xi^\e_t- x^\e_t|^2 |\mathcal{F}_{T_k}]\mathcal{X}_{T_k \leq \tau^\e_T}]\\
\leq& \sum_{k=-1}^{\infty}\mathbb{E}_{x_0}[\mathbb{E}_{H(q^\e_{T_k})}[\sup_{0 \leq t \leq T_0 \wedge \tau^\e_T}|\xi^\e_t- x^\e_t|^2]\mathcal{X}_{T_k \leq \tau^\e_T}]\\
\leq& \sum_{k=-1}^{\infty}\mathbb{E}_{x_0}[\sup_{(x_0,i) \in I_i}\mathbb{E}_{x_0}[\sup_{0 \leq t \leq T_0 \wedge \tau^\e_T}|\xi^\e_t- x^\e_t|^2]\mathcal{X}_{T_k \leq \tau^\e_T}]\\
=& \sum_{k=-1}^{\infty}\sup_{(x_0,i) \in I_i}\mathbb{E}_{x_0}[\sup_{T_k \leq t \leq T_{k+1}\wedge \tau^\e_T}|\xi^\e_t- x^\e_t|^2]\mathbb{E}_{x_0}[\mathcal{X}_{T_k \leq \tau^\e_T}]\\
\leq& (C ( (\bar{C} \e)^{5} + (\bar{C}\e)^{2.5}) + 2\bar{C}^4\e^4 h^4(\e)+ 2\bar{C}^2 \e^2 h^2(\e)) \sum_{k=0}^{\infty} \mathbb{E}[\mathcal{X}_{k \leq N} ].\\
\end{aligned}
\end{equation*}
Moreover,
\begin{equation*}
\sum_{k=0}^{\infty} \mathbb{E}[\mathcal{X}_{k \leq N}] \leq 1+\mathbb{E}[N]= 1+ \mathbb{E}[\frac{\tau^\e_{H_1,H_2}}{\underline{C}\e}] \leq \frac{C}{\e}
\end{equation*}
and thus prove for all $T>0$,
\begin{equation*}
\mathbb{E}_{x_0}[\sup_{0 \leq t \leq \tau^\e_T} |\xi^\e_t- x^\e_t|^2]  \leq C(C ( (\bar{C} \e)^{4} + (\bar{C}\e)^{3/2}) + 2\bar{C}^4\e^3 h^4(\e)+ 2\bar{C}^2 \e h^2(\e)).
\end{equation*}
Finally, we take the $\sup$ over $T$ and prove the result.
\end{proof}

\begin{Lemma}\label{uniform weak convergence 2}
Under Hypothesis \ref{h1} and \ref{h2}, for any intitial condition $(x_0,i)$ on edge $I_i$ and $H_1 \leq x_0 \leq H_2$ such that $(H_1,i), (H_2,i)$ inside the interior of $I_i$, we have
\begin{equation*}
\lim_{\e \to 0}\sup_{(x_0 ,i ) \in I_i}\mathbb{E}_{x_0}[\sup_{0\leq s \leq \tau^\e_{H_1,H_2}} |{\xi}^\e_s-\tilde{x}^\e_s|^2] =0.
\end{equation*}
\end{Lemma}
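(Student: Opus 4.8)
\noindent\emph{Plan.} I would deduce this from the time‑truncated estimate of Lemma~\ref{weak convergence 2}, which controls $\sup_{(x_0,i)\in I_i}\mathbb{E}_{x_0}[\sup_{0\leq s\leq\tau^\e_T}|\xi^\e_s-\tilde{x}^\e_s|^4]$ for $\tau^\e_T=\tau^\e_{H_1,H_2}\wedge T$, by first fixing $T$, sending $\e\to 0$, and only afterwards letting $T\to\infty$. Since the $L^2$ bound follows from the $L^4$ bound by Jensen's inequality, it is enough to work with fourth moments; and throughout $\tilde{x}^\e$ is understood as the solution of its defining equation on all of $[0,\tau^\e_{H_1,H_2}]$, which is legitimate once $B_i$ and $A_i^{1/2}$ are extended to bounded, globally Lipschitz functions on $\mathbb{R}$ (on $\{\tau^\e_{H_1,H_2}\leq T\}$ it then coincides with the process of Lemma~\ref{weak convergence 2}).

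\noindent\emph{Step 1 (uniform control of the exit time, and an a priori bound).} From $M:=\sup_{\e>0}\sup_{z\in D_i([H_1,H_2])}\mathbb{E}_z[\tau^\e_{H_1,H_2}]<\infty$, the lemma established just above, the strong Markov property for $q^\e$ together with Markov's inequality gives $\sup_z\mathbb{P}_z[\tau^\e_{H_1,H_2}>nt_0]\leq(M/t_0)^n$ as soon as $t_0>M$, so all moments $M_p:=\sup_{\e}\sup_z\mathbb{E}_z[(\tau^\e_{H_1,H_2})^p]$ are finite, uniformly in $\e$, and $\sup_z\mathbb{P}_z[\tau^\e_{H_1,H_2}>T]\leq M/T$. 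With this I would prove the crude bound $\sup_{(x_0,i)\in I_i}\mathbb{E}_{x_0}[\sup_{0\leq s\leq\tau^\e_{H_1,H_2}}|\xi^\e_s-\tilde{x}^\e_s|^4]\leq C$ uniformly in $\e$: since $\xi^\e_{T^\e_k}=H(q^\e_{T^\e_k})\in[H_1,H_2]$ whenever $T^\e_k\leq\tau^\e_{H_1,H_2}$ and $|\xi^\e_s-\xi^\e_{T^\e_k}|\leq|B_i|_\infty\bar{C}\e+|A_i^{1/2}|_\infty\sup_{u\leq\tau^\e_{H_1,H_2}}|\beta^\e_u|$ inside each period, while $\tilde{x}^\e$ has bounded coefficients, both $\sup_{s\leq\tau^\e_{H_1,H_2}}|\xi^\e_s|$ and $\sup_{s\leq\tau^\e_{H_1,H_2}}|\tilde{x}^\e_s|$ are dominated by a constant plus $\sup_{s\leq\tau^\e_{H_1,H_2}}|\beta^\e_s|$ plus a stochastic integral, whose fourth moments are controlled through $M_2$ and $M_4$ by the Burkholder--Davis--Gundy inequality.

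\noindent\emph{Step 2 (splitting at time $T$ and passing to the limit).} Fix $T>0$ and split $\mathbb{E}_{x_0}[\sup_{0\leq s\leq\tau^\e_{H_1,H_2}}|\xi^\e_s-\tilde{x}^\e_s|^2]$ into the contributions of $\{\tau^\e_{H_1,H_2}\leq T\}$ and $\{\tau^\e_{H_1,H_2}>T\}$. On the first event the supremum equals $\sup_{0\leq s\leq\tau^\e_T}|\xi^\e_s-\tilde{x}^\e_s|^2$, so by Jensen's inequality and Lemma~\ref{weak convergence 2} the corresponding contribution is at most $(CT\bar{C}^4\e^2)^{1/2}$, which tends to $0$ as $\e\to 0$, uniformly in $x_0$; by the Cauchy--Schwarz inequality and Step~1 the second contribution is at most $\mathbb{E}_{x_0}[\sup_{0\leq s\leq\tau^\e_{H_1,H_2}}|\xi^\e_s-\tilde{x}^\e_s|^4]^{1/2}\,\mathbb{P}_{x_0}[\tau^\e_{H_1,H_2}>T]^{1/2}\leq C(M/T)^{1/2}$. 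Hence $\limsup_{\e\to0}\sup_{(x_0,i)\in I_i}\mathbb{E}_{x_0}[\sup_{0\leq s\leq\tau^\e_{H_1,H_2}}|\xi^\e_s-\tilde{x}^\e_s|^2]\leq C(M/T)^{1/2}$ for every $T>0$, and letting $T\to\infty$ proves the lemma.

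\noindent\emph{Where the difficulty lies.} Unlike in Lemma~\ref{uniform weak convergence 1}, the processes $\xi^\e$ and $\tilde{x}^\e$ do not re‑initialize at the times $T^\e_k$, because $\tilde{x}^\e$ evolves as a continuous diffusion; the one‑period discrepancies therefore accumulate, the per‑period estimate inherited from the proof of Lemma~\ref{weak convergence 2} carries a factor exponential in the period index, and the direct summation over the random number of periods used in Lemma~\ref{uniform weak convergence 1} ceases to converge. This is why Lemma~\ref{weak convergence 2} must be invoked as a black box and the tail event $\{\tau^\e_{H_1,H_2}>T\}$ treated on its own, and why the step requiring the most care is Step~1 --- upgrading the single uniform first‑moment estimate for $\tau^\e_{H_1,H_2}$ to moments that remain uniform in $\e$ --- since the control of the tail rests entirely on it.
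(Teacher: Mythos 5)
Your proof is correct, but it takes a genuinely different route from the paper. The paper handles this lemma exactly as Lemma \ref{uniform weak convergence 1}: it bounds $\sup_{0\leq s\leq \tau^\e_{H_1,H_2}}|\xi^\e_s-\tilde{x}^\e_s|^2$ by the sum of the suprema over the period intervals $[T^\e_{k-1}\wedge\tau, T^\e_k\wedge\tau]$, factors each summand through the per-period fourth-moment estimates coming from the proof of Lemma \ref{weak convergence 2}, and sums against $\sum_k \mathbb{E}[\mathcal{X}_{k\leq N}]\leq 1+\mathbb{E}[N]\leq C/\e$ (obtained from the uniform bound on $\mathbb{E}_z[\tau^\e_{H_1,H_2}]$), which would yield an explicit rate in $\e$. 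You instead truncate at a deterministic time $T$, invoke Lemma \ref{weak convergence 2} as a black box on $\{\tau^\e_{H_1,H_2}\leq T\}$, and control the tail event $\{\tau^\e_{H_1,H_2}>T\}$ by Cauchy--Schwarz, combining an $\e$-uniform crude fourth-moment bound on $\sup_{s\leq\tau}|\xi^\e_s-\tilde{x}^\e_s|$ (using $\xi^\e_{T_k}=H(q^\e_{T_k})\in[H_1,H_2]$, bounded coefficients, and Burkholder--Davis--Gundy up to the stopping time) with geometric tail bounds for $\tau^\e_{H_1,H_2}$ obtained by iterating the uniform first-moment estimate via the Markov property; then $\e\to 0$ before $T\to\infty$. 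Both ingredients of your Step 1 are sound, and your caveat about extending $B_i$, $A_i^{1/2}$ to bounded Lipschitz functions is exactly what the paper itself needs implicitly for $\tilde{x}^\e$ to be well defined up to the exit time of $q^\e$ (uniform ellipticity of $\mathcal{L}_i$ on $[H_1,H_2]$, asserted by the paper, makes $A_i^{1/2}$ Lipschitz there). The trade-off: the paper's summation re-uses the machinery of the previous lemma and is quantitative, but, as you correctly observe, the period-$k$ estimate in the proof of Lemma \ref{weak convergence 2} carries the non-restarting term $\delta_{k-1}$ and a factor growing with $k$, so summing over the unbounded random number of periods is more delicate than the paper's two-line treatment suggests; your decomposition sidesteps that accumulation issue entirely, at the cost of routine extra moment estimates and of delivering convergence without a rate, which is all the lemma requires.
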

\begin{proof}
Similarly, we have
\begin{equation*}
\begin{aligned}
\mathbb{E}_{x_0}[\sup_{0\leq s \leq \tau^\e_T } |{\xi}^\e_s-\tilde{x}^\e_s|^2] 
\leq& \mathbb{E}_{x_0}[\sup_{k}\sup_{T_{k-1}\wedge \tau^\e_T \leq s \leq T_{k}\wedge \tau^\e_T } |\bar{\xi}^\e_s-\bar{\tilde{x}}^\e_s|^2 ]\\
\leq & \mathbb{E}_{x_0}[\sum_{k=0}^{\infty}\sup_{T_{k-1}\wedge \tau^\e_T \leq s \leq T_{k}\wedge \tau^\e_T } |\bar{\xi}^\e_s-\bar{\tilde{x}}^\e_s|^2 \mathcal{X}_{k \leq N} ]\\
\leq& \sum_{k=0}^{\infty} \mathbb{E}_{x_0}[\sup_{T_{k-1}\wedge \tau^\e_T \leq s \leq T_{k}\wedge \tau^\e_T } |\bar{\xi}^\e_s-\bar{\tilde{x}}^\e_s|^2 \mathcal{X}_{k \leq N} ]\\
\leq& \sum_{k=0}^{\infty} \sup_{(x_0,i)\in I_i}\mathbb{E}_{x_0}[\sup_{T_{k-1}\wedge \tau^\e_T \leq s \leq T_{k}\wedge \tau^\e_T } |\bar{\xi}^\e_s-\bar{\tilde{x}}^\e_s|^4] \mathbb{E}[ \mathcal{X}_{k \leq N} ].\\
\end{aligned}
\end{equation*}
Now by the estimation \eqref{eq weak convergence 2}, and process as the previous Lemma, the result is obvious.
\end{proof}

Now if we apply exactly the same proof as in Lemma \ref{convergence in edge weaker version}, Lemma \ref{convergence in edge} is proved.

\section{Properties near the Saddle Point}\label{gluing condition}
Since the gluing condition is given by local property, we can assume without loss of generality in this section that
\begin{Hypothesis}\label{h3}
The coefficients $g(x)$, $b(x)$, $b^\e(x)$, $\si(x)$ and $\si^\e(x)$, $x \in \mathbb{R}^2$ in equation (\ref{qsde}) are uniformly bounded. 
\end{Hypothesis}

\subsection{Averaging the Measure}
We first remark that: the probability for $q^\e_t$, starting from an initial point $q \in D_k(\pm \d')$ to reach the level set $C_{ki}(\d)$ is approximately the same. The following Lemma is a consequence of Krylov and Safonov's theory (see \cite{KS}).
\begin{Lemma}\label{averaging measure 1}
There exists a $\d^{\ref{averaging measure 1}}>0$ such that for every $0< \d'< \d <\d^{\ref{averaging measure 1}}$ and and any $\kappa>0$, the following estimate holds for sufficiently small $\e$,
\begin{equation*}
\sup_{q_1, q_2 \in \bar{D}_k(\pm \d')} |\mathbb{E}_{q_1}[f(q^\e_{\tau^\e_k(\pm \d)})]-\mathbb{E}_{q_2}[f(q^\e_{\tau^\e_k(\pm \d)})]| < \kappa.
\end{equation*}
\end{Lemma}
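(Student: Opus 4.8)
The plan is to realise $\mathbb{E}_{q}[f(q^\e_{\tau^\e_k(\pm\d)})]$ as the value of an exit problem for $q^\e$ on $D_k(\pm\d)$ and to control its dependence on the starting point by a Krylov--Safonov regularity estimate at the saddle, combined with the strong Markov property and the fact that $q^\e$ winds around the separatrix a number of times that tends to infinity before leaving $D_k(\pm\d)$. We may assume $\|f\|_\infty\le1$ (the general case follows by scaling) and set $u^\e(q)=\mathbb{E}_q[f(q^\e_{\tau^\e_k(\pm\d)})]$; by the strong Markov property this is the bounded solution of
\[
\Big(\tfrac1\e\,g\cdot\nabla+\mathcal{L}_0+\mathcal{L}_0^\e\Big)u^\e=0\ \ \text{in}\ D_k(\pm\d),\qquad u^\e=f\ \ \text{on}\ \partial D_k(\pm\d),
\]
equivalently, after the time change $\tilde q^\e_t=q^\e_{\e t}$, of $(g\cdot\nabla+\e\,\mathcal{L}^\e)u^\e=0$. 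It is enough to show that $\sup_{q_1,q_2\in\bar D_k(\pm\d')}|u^\e(q_1)-u^\e(q_2)|$ becomes smaller than $\kappa$ once $\d'/\d$ and $\e$ are small.

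The local input is a Krylov--Safonov estimate at $O_k$. First, $g(O_k)=0$: differentiating $g\cdot\nabla H\equiv0$ gives $(D^2H)\,g=-(Dg)^{*}\nabla H$, and at $O_k$ this reads $(D^2H)(O_k)\,g(O_k)=0$, whence $g(O_k)=0$ because the Hessian $(D^2H)(O_k)$ is invertible by Hypothesis~\ref{h2}(3); since $Dg$ is bounded, $|g(x)|\le L\,|x-O_k|$ near $O_k$. By Hypothesis~\ref{h2}(2) fix $r_0>0$ with $\sigma\sigma^*\ge\lambda_0 I$ on $B(O_k,r_0)\subset D_k(\pm\d^{\ref{averaging measure 1}})$. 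Rescaling space by $y=(x-O_k)/\sqrt\e$, the function $v^\e(y):=u^\e(O_k+\sqrt\e\,y)$ solves a \emph{uniformly} elliptic equation on $B(0,r_0/\sqrt\e)$ whose drift $\e^{-1/2}g(O_k+\sqrt\e\,y)+\sqrt\e\,(b+b^\e)(\cdot)$ is, on every fixed ball $B(0,R)$, bounded by $LR+O(\sqrt\e)$ \emph{independently of $\e$}. Hence the Krylov--Safonov interior H\"older estimate (equivalently a Harnack inequality) applies with $\e$-independent constants; in particular, the law of $\tilde q^\e$ at its first crossing of a fixed transversal $\Sigma$ to the flow placed inside $B(O_k,r_0)$ --- which near the saddle is parametrised by the value of $H$ and by the branch of the separatrix --- has, started from any point of $B(O_k,r_0)$, a density bounded above and below by $\e$-independent multiples of a fixed reference density. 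This is the ``averaging of the measure'' at the level of a single winding.

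To pass from one winding to all of $\bar D_k(\pm\d')$ one exploits the winding structure. Starting from $q\in\bar D_k(\pm\d')$, up to vanishing corrections the value $H(q^\e_t)$ is governed away from $O_k$ by the one-dimensional diffusion of Lemma~\ref{convergence in edge}, and its scale function shows that with probability $1-O(\d'/\d)$ the path $H(q^\e_t)$ returns to the level $H(O_k)$ --- so that $q^\e$ enters the regime of winding around the separatrix --- before leaving $D_k(\pm\d)$, after which it performs $\asymp\d^2/\e\to\infty$ further windings. Using the strong Markov property at the successive $\Sigma$-crossings together with the one-winding lower bound on the crossing density, one then constructs a coupling of the processes issued from $q_1$ and $q_2$ (reflection coupling of the essentially one-dimensional motion of $H$ to bring the two $H$-values together, then angular coalescence across the remaining windings) that makes them coalesce before leaving $D_k(\pm\d)$ with probability $1-O(\d'/\d)-o_\e(1)$; on the coalescence event the exit point, hence $f(q^\e_{\tau^\e_k(\pm\d)})$, agrees, so that
\[
\big|\mathbb{E}_{q_1}[f(q^\e_{\tau^\e_k(\pm\d)})]-\mathbb{E}_{q_2}[f(q^\e_{\tau^\e_k(\pm\d)})]\big|\le 2\|f\|_\infty\big(O(\d'/\d)+o_\e(1)\big),
\]
which is $<\kappa$ once $\d'/\d$ and $\e$ are small enough (here $o_\e(1)$ denotes a quantity tending to $0$ as $\e\to0$).

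The main obstacle is the degeneracy of the generator of $q^\e$: its first-order part is of size $1/\e$ while its second-order part is only bounded (and, away from a neighbourhood of $O_k$, possibly of rank one), so no uniform elliptic or Krylov--Safonov estimate is available directly. What rescues the argument is the parabolic rescaling $x\mapsto(x-O_k)/\sqrt\e$ at the saddle, legitimate precisely because $O_k$ is a rest point of the flow ($g(O_k)=0$) and $\sigma$ is non-degenerate there (Hypothesis~\ref{h2}(2),(3)): it turns the degenerate problem into a genuinely elliptic one with $\e$-independent constants on each fixed ball. A secondary technical point, and the reason the propagation above must be carried out probabilistically rather than by a finite Harnack chain, is that this rescaled ellipticity lives on the $\sqrt\e$ scale whereas $\bar D_k(\pm\d')$ has a fixed size, so covering the relevant region would require $O(\e^{-1})$ Krylov--Safonov balls and a useless product of constants; the $\asymp\d^2/\e$ windings supply the mixing that a single chain cannot.
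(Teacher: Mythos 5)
The paper itself does not prove this lemma: in Section~5.1 it merely asserts it as ``a consequence of Krylov and Safonov's theory'' and points to~\cite{KS}, so your proposal is a reconstruction of a cited argument rather than a comparison with something the author wrote out. Your central observation is correct and is indeed the right starting point for the cited work: differentiating $g\cdot\nabla H\equiv0$ and using the invertibility of $D^2H(O_k)$ (Hypothesis~\ref{h2}(3)) gives $g(O_k)=0$ and $|g(x)|\le L\,|x-O_k|$ near $O_k$, while Hypothesis~\ref{h2}(2) gives uniform ellipticity of $\sigma\sigma^*$ there, so the rescaling $y=(x-O_k)/\sqrt\e$ turns the generator $\e^{-1}g\cdot\nabla+\mathcal L^\e$ into a uniformly elliptic operator with $\e$-independent constants on every fixed ball $B(0,R)$.

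The step that cannot be made as written is the ``in particular'' that follows. The Krylov--Safonov Harnack/H\"older estimate you invoke lives, after rescaling, on $B(0,R)$, i.e.\ on $B(O_k,R\sqrt\e)$ in the original variables; it does \emph{not} give the asserted claim that the exit law on a fixed transversal $\Sigma\subset B(O_k,r_0)$ is, started from any point of the \emph{fixed-size} ball $B(O_k,r_0)$, comparable to a reference density. For a starting point at distance $\gg\sqrt\e$ from $O_k$, the $\e^{-1}g$-drift dominates and the path tracks its level curve almost deterministically; and a level curve with $|H-H(O_k)|=c$ never comes closer than $\asymp\sqrt c$ to $O_k$, so only excursions with $|H-H(O_k)|\lesssim\e$ ever enter the region where the rescaled ellipticity produces genuine mixing. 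You acknowledge exactly this obstruction in your last paragraph (``covering the relevant region would require $O(\e^{-1})$ Krylov--Safonov balls''), which contradicts the single-crossing density comparability claimed earlier. The real mechanism is that $H$ fluctuates by $\mathcal O(\sqrt\e)$ per revolution, so that over $\asymp\d^2/\e$ revolutions the path repeatedly revisits $\{|H-H(O_k)|\lesssim\e\}$, and the $\sqrt\e$-scale ellipticity mixes there; turning that into a proof requires a quantitative hitting/visit count for $B(O_k,C\sqrt\e)$ and an honest coupling lemma, neither of which is supplied. In summary: your opening rescaling idea is correct and in the spirit of~\cite{KS}, but the propagation from $\bar D_k(\pm\d')$ down to the $\sqrt\e$-scale is precisely where the difficulty lies, and your treatment of it --- ``reflection coupling of $H$, then angular coalescence'' --- is a sketch, not a proof, and as stated conflates the fixed scale $r_0$ with the scale $\sqrt\e$ on which Krylov--Safonov actually applies.
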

If the function $f$ in the Lemma is $\mathcal{X}_{C_{kj}(\d)}(I_j \sim O_k)$, we have $\mathbb{E}_{q_i}[f(q^\e_{\tau^\e_k(\pm \d)})]= \mathbb{P}_{q_i}[q^\e_{\tau^\e_k(\pm \d)} \in C_{kj}(\d)]$. Now we can average the probability $\mathbb{P}_{q}[q^\e_{\sigma_1} \in C_{kj(\d)}]$ by a measure $\mu$ on the level set $C_k(\pm \d')$.
\begin{Lemma}\label{averaging measure 2}
Let $\mu$ be a measure on the level set $C_k(\pm \d')$. For any $\kappa>0$ and sufficiently small $\e$,
\begin{equation}\label{eq28}
\sup_{x \in C_k(\pm \d')}|\mathbb{P}_x[q^\e_{\tau^\e_k(\pm \d) \in C_{ki}(\d)}]- \int_{C_k(\pm \d')} \frac{\mu(dx) \mathbb{P}_x[q^\e_{\tau^{\e}_k(\pm \d)} \in C_{ki}(\d)]}{\mu(C_k(\pm \d'))}|<\kappa.
\end{equation}
\end{Lemma}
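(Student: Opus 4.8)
The plan is to deduce Lemma~\ref{averaging measure 2} directly from Lemma~\ref{averaging measure 1}, essentially with no extra work beyond a triangle inequality. Fix the interior vertex $O_k$ and the edge $I_i \sim O_k$, and apply Lemma~\ref{averaging measure 1} with the bounded measurable function $f = \mathcal{X}_{C_{ki}(\d)}$, so that $\mathbb{E}_q[f(q^\e_{\tau^\e_k(\pm\d)})] = \mathbb{P}_q[q^\e_{\tau^\e_k(\pm\d)} \in C_{ki}(\d)]$ for every starting point $q$, exactly as noted in the remark following Lemma~\ref{averaging measure 1}. Since $\d' < \d$, the level set $C_k(\pm\d') = \partial D_k(\pm\d')$ is contained in the compact set $\bar D_k(\pm\d')$, so Lemma~\ref{averaging measure 1} applies and yields, for $\d < \d^{\ref{averaging measure 1}}$ and every $\kappa > 0$, a threshold $\e_0 > 0$ such that for all $0 < \e < \e_0$
\begin{equation*}
\sup_{q_1, q_2 \in C_k(\pm\d')} \big| \mathbb{P}_{q_1}[q^\e_{\tau^\e_k(\pm\d)} \in C_{ki}(\d)] - \mathbb{P}_{q_2}[q^\e_{\tau^\e_k(\pm\d)} \in C_{ki}(\d)] \big| < \kappa .
\end{equation*}

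Next I would introduce the normalized measure $\bar\mu(dy) = \mu(dy)/\mu(C_k(\pm\d'))$, a probability measure on $C_k(\pm\d')$; here one uses that $\mu$ is a finite and non-trivial measure so that $0 < \mu(C_k(\pm\d')) < \infty$ and the normalization is legitimate. For any fixed $x \in C_k(\pm\d')$, since $\bar\mu$ has total mass one one can absorb the constant $\mathbb{P}_x[\cdots]$ into the integral and write
\begin{equation*}
\mathbb{P}_x[q^\e_{\tau^\e_k(\pm\d)} \in C_{ki}(\d)] - \int_{C_k(\pm\d')} \bar\mu(dy)\, \mathbb{P}_y[q^\e_{\tau^\e_k(\pm\d)} \in C_{ki}(\d)] = \int_{C_k(\pm\d')} \bar\mu(dy)\, \big( \mathbb{P}_x[\cdots] - \mathbb{P}_y[\cdots] \big).
\end{equation*}
Taking absolute values, bounding the integrand pointwise by $\kappa$ for every $y \in C_k(\pm\d')$ via the display of the previous paragraph, and using that $\bar\mu$ is a probability measure, gives the bound $\kappa$ uniformly over $x \in C_k(\pm\d')$, which is precisely \eqref{eq28}.

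I do not expect any genuine obstacle here: the whole analytic content sits in Lemma~\ref{averaging measure 1}, which is the Krylov--Safonov equicontinuity estimate for hitting probabilities of uniformly elliptic diffusions restricted to the annular region $D_k(\pm\d) \setminus D_k(\pm\d')$, and the present statement is just the elementary fact that an average against any probability measure of quantities all lying within $\kappa$ of a common value again lies within $\kappa$ of that value. The only two points deserving a word of care are (i) checking the inclusion $C_k(\pm\d') \subseteq \bar D_k(\pm\d')$ so that Lemma~\ref{averaging measure 1} is applicable on the set over which the supremum in \eqref{eq28} is taken, and (ii) recording finiteness and strict positivity of $\mu(C_k(\pm\d'))$.
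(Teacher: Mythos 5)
Your proposal is correct and follows exactly the route the paper implicitly intends: the paper states Lemma~\ref{averaging measure 2} immediately after Lemma~\ref{averaging measure 1} with no proof, treating it as the elementary consequence of choosing $f = \mathcal{X}_{C_{ki}(\d)}$ in Lemma~\ref{averaging measure 1} and observing that averaging against any probability measure on $C_k(\pm\d') \subseteq \bar D_k(\pm\d')$ cannot move the value by more than the oscillation bound $\kappa$. Your two points of care (the inclusion $C_k(\pm\d') \subseteq \bar D_k(\pm\d')$ and the finiteness and positivity of $\mu(C_k(\pm\d'))$) are appropriate and are needed to make the normalization legitimate.
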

The measure $\mu$ will be determined by the long term behavior of $q^\e_t$ as to be shown in the next subsection.

\subsection{Representation of the Invariant Measure}
Let 
\[
C_\d= \bigcup_{k \in S} \bigcup_{I_i \sim O_k}C_{ki}(\pm \d)\ \ \ \ \ \ C_{\d'}= \bigcup_{k \in S} \bigcup_{I_i \sim O_k} C_{ki}(\pm \d').
\]
The sequence $\{q^\e_{\tau^\e_k}\}_{k=0}^{\infty}$ is a Markov chain and for all $k>1$, $q^\e_{\tau^\e_k} \in C_{\d'}$. Moreover if $q^\e_0\in C_{\d}$, $\{ q^\e_{\sigma^\e_k} \}_{k=0}^{\infty}$ is a Markov chain on $C_{\d}$. Then every invariant measure $\mu^\e$ of the process $(q^\e_t, \mathbb{P}^\e_q)$ can be represented in the form
\begin{equation}、\label{eq26}
\mu^\e(A)= \int_{C_\d} \nu^\e(dz) \mathbb{E}_z^\e[\int_0^{\sigma^\e_1} \mathbbm{1}_A(q^\e_t) dt]= \int_{C_{\d'}} \nu'^{\e} (dz) \mathbb{E}^\e_z[\int_0^{\tau^\e_1} \mathbbm{1}_A (q^\e_t) dt],
\end{equation}
where $\nu^\e$ and $\nu'^\e$ are measures on $C_\d$ and $C_{\d'}$ such that
\begin{equation}\label{eq27}
\begin{aligned}
\nu^\e(B)= \int_{C_{\d'}} \nu'^\e (dz) \mathbb{P}^\e_z[q^\e_{\sigma^\e_0} \in B],\ \ \ \nu'^\e(C)= \int_{C_\d} \nu^\e(dz) \mathbb{P}^\e_z[q^\e_{\tau^\e_1} \in C].
\end{aligned}
\end{equation}
See \cite{Khas}. Notice that $\nu^\e$ and $\nu'^{\e}$ are the invariant measures for the Markov chains $\{q^\e_{\si^\e_k}\}_{k=0}^{\infty}$ and $\{q^\e_{\tau^\e_k}\}_{k=0}^{\infty}$ on $C_\d$ and $C_{\d'}$ respectively. Now we pick the measure $\mu$ in equation (\ref{eq28}) to be $\nu'^\e$ and we get
\begin{equation*}
 \int_{C_k(\pm \d')} \frac{\mu(dz) \mathbb{P}_z[q^\e_{\tau^{\e}_k(\pm \d)} \in C_{ki}(\d)]}{\mu(C_k(\pm \d'))}=\frac{\nu^\e(C_{ki}(\d)}{\nu'^\e(C_k(\pm \d'))}.
\end{equation*}
Hence, we apply equation \eqref{eq27} with $C= C_k(\pm \d')$ and since $\mathbb{P}_z^\e[q^\e_{\tau_1} \in C]=1$, for all $z \in C_k(\pm \d‘) $, we have $\nu'^\e(C_{k}(\pm \d'))= \nu^\e(C_{k}(\pm \d'))$. Therefore
\begin{equation*}
|\mathbb{P}_z[q^\e_{\tau^\e(\pm \d) \in C_{ki}(\d)}]-\frac{\nu^\e(C_{ki}(\d)}{\nu^\e(C_k(\pm \d))}|<\kappa.
\end{equation*}
Finally, apply equation \eqref{eq26} with a bounded measurable function $G$ and get
\begin{equation}\label{eq32}
\int_{\mathbb{R}^2} G(z) \mu^\e(dz)= \int_{C_\d} \nu^\e(dz) \mathbb{E}_z^\e[\int_0^{\sigma_1} G(q^\e_t) dt].
\end{equation}

\subsection{Proof of Lemma \ref{lemma gluing condition}: A Special Case}
In this section we first assume that the following Hypothesis holds,
\begin{Hypothesis}\label{h4}
Let function $a(x)$ be the function defined by \eqref{invariant measure}. We assume that $a$ satisfies the following relation for all $\e>0$.
\begin{equation*}
(\mathcal{L}_0)^*[a^{-1}](z)=0,\ \ \ (\mathcal{L}_0^\e)^*[a^{-1}](z)=0\ \ \ \forall z \in \mathbb{R}^2.
\end{equation*}
\end{Hypothesis}
Then $a^{-1}(z)dz$ is the invariant measure for the the system $\{q^\e_t\}$ as $\e \downarrow 0$ and $\mathcal{L}_i$ is a generalized differential operator. Apply Lemma \ref{l15} again, the left hand side of equation (\ref{eq32}) becomes
\begin{equation*}
\int_{\mathbb{R}^2} G(z) \mu^\e(dz)= \int_{\mathbb{R}^2} G(z) \frac{dz}{a(z)}= \int \int_{C(H)} G(z) \frac{dl}{|\nabla H(z)|a(z)} dH,
\end{equation*}
Let the window function $G(z)$ depend only on the Hamiltonian, 
\[
G(z)= f \circ H(z),
\]
where $f \in \textbf{C}^{\infty}(\mathbb{R})$ with support in $(H_{k1}+\d,H_{k2}-\d)$ and $H_{ki}=H(O_{ki})$, $i=1,2$, $O_{ki}(i=1,2)$ are the two end points of an edge $I_k$. Then
\begin{equation}\label{eq51}
\int_{\mathbb{R}^2} G(z) \mu^\e(dz)= \int_{H_1+\d}^{H_2-\d} f(H) v'_k(H) dH.
\end{equation}
Finally, we apply Lemma \ref{convergence in edge} to the right hand side of equation \eqref{eq32},
\begin{equation*}
\lim_{\e \to 0} \mathbb{E}_z^\e[\int_0^{\sigma^\e_1} G(q^\e_t) dt]=\lim_{\e \to 0} \mathbb{E}_x^\e[\int_0^{\sigma^\e_1} f(x^\e_t) dt]=\lim_{\e \to 0} \mathbb{E}_x^\e[\int_0^{\tau^\e_1} f(x^\e_t) dt]= u(x),
\end{equation*}
where $x= H(z)$ and $u(x)$ is the solution to
\begin{equation*}
\begin{cases}
\mathcal{L}_i[u]=f\\
u(H_1+\d')=u(H_2-\d')=0.
\end{cases}
\end{equation*}
It can be easily seen that $u(x)$ can be solved by the following formula,
\begin{equation*}
\begin{aligned}
u(x)=&\frac{u_k(H_{k2}-\d')-u_k(x)}{u_k(H_{k2}-\d')-u_k(H_{k1}+\d')}\int_{H_{k1}+\d'}^{x}(u_k(x)-u_k(H_{k1}+\d')) f(h) dv_k(h)\\
&+\frac{u_k(x)-u_k(H_1+\d')}{u_k(H_{k2}-\d')-u_k(H_{k1}+\d')}\int_{x}^{H_{k2}-\d'} (u_k(H_{k2}-\d')-u_k(x)) f(h) dv_k(h).
\end{aligned}
\end{equation*}
Notice that $f$ has support in $(H_{k1}+\d,H_{k2}-\d)$, so if $x \in C_{k1}(\d)$, then
\begin{equation*}
u(H_{k1}+\d)=\frac{u_k(H_1+\d)-u_k(H_1+\d')}{u_k(H_{k2}-\d')-u_k(H_{k1}+\d')}\int_{H_{k1}+\d}^{H_{k2}-\d'} (u_k(H_{k2}-\d')-u_k(H_{k1}+\d)) f(h) dv_k(h),
\end{equation*}
and if $x \in C_{k2}(-\d)$, then
\begin{equation*}
u(H_{k2}-\d)=\frac{u_k(H_{k2}-\d')-u_k(H_{k2}-\d)}{u_k(H_{k2}-\d')-u_k(H_{k1}+\d')}\int_{H_{k1}+\d'}^{H_{k2}-\d}(u_k(H_{k2}-\d)-u_k(H_{k1}+\d')) f(h) dv_k(h).
\end{equation*}
Now we combine equation \eqref{eq32} and equation \eqref{eq51},
\begin{equation*}
\int_{H_1+\d}^{H_2-\d} f(H) v'_k(H) dH = \lim_{\e \to 0} \nu^\e (C_{k1}(\d)) (u(H_{k1}+\d)+\mathcal{O}(\e) )+ \nu^\e(C_{k2}(\d)) (u(H_{k2}-\d)+\mathcal{O}(\e)).
\end{equation*}
which is an equation holds for all $f$, therefore we have:
\begin{equation*}
\lim_{\e \to 0}\nu^\e(C_{k1}(\d)) = \frac{1}{u_i(H_1+\d)-u_i(H_2+\d')},
\end{equation*}
\begin{equation*}
\lim_{\e \to 0}\nu^\e(C_{k2}(\d)) = \frac{1}{u_i(H_2-\d')-u_i(H_2-\d)}.
\end{equation*}
Finally, letting $\d \to \d'$ and $\d'\to 0$ gives us Lemma \ref{lemma gluing condition} under Hypothesis \ref{h4}.

\subsection{Proof of Lemma \ref{lemma gluing condition}: The General Case}

Now we consider the case without Hypothesis \ref{h4}.
Instead of considering the process $q^\e_t$ whose invariant measure is unknown when $\e \downarrow 0$, we consider the system by compensating a smooth vector filed $\hat{b}$ and $\hat{b}^\e$ satisfying 
\begin{equation}\label{eq30}
(\mathcal{L}_0 + \hat{b}\cdot \nabla)^*[a^{-1}]=0,\ \ \ (\mathcal{L}^\e_0 + \hat{b}^\e\cdot \nabla)^*[a^{-1}]=0
\end{equation}
and $\mathcal{L}_0+ \hat{b} \nabla$, $(\mathcal{L}_0^\e+ \hat{b}^\e \nabla)^*$ are the corresponding formal adjoint operator. We first claim that both such $\hat{b}$ and $\hat{b}^\e$ exists under Hypothesis \ref{h1}, \ref{h2}, and \ref{h3}. Indeed 
\begin{equation*}
\begin{aligned}
&(\mathcal{L}_0 + \hat{b}\cdot \nabla)^*[a^{-1}]=\mathcal{L}_0[a^{-1}] - \sum_{i=1}^2 \frac{\partial}{\partial x_i}(\hat{b}_i a^{-1}).
\end{aligned}
\end{equation*}
Let $\hat{b}_2=0$ and $\hat{b}_1$ satisfy the first-order partial differential equation
\begin{equation*}
\frac{\partial}{\partial x_1}(\hat{b}_1 a^{-1})=\mathcal{L}_0[a^{-1}](x).
\end{equation*}
Then
\begin{equation}\label{eq31}
\hat{b}_1(x_1,x_2)=a(x_1,x_2) \int_0^{x_1}\mathcal{L}_0[a^{-1}](u_1,x_2) du_1.
\end{equation}
By Hypothesis \ref{h3}, $\hat{b}$ is well-defined, bounded and globaly Lipschitz continuous. Similarly, such $\hat{b}^\e$ also exists. Therefore the stochastic process $\hat{q}^\e_t$ is well-defined for every fixed $\e>0$:
\begin{equation*}
\begin{cases}
d\hat{q}^\e_t=\frac{1}{\e} g(\hat{q}^\e_t) dt +b(\hat{q}^\e_t) dt + \sigma(\hat{q}^\e_t) dW_t + b^\e(\hat{q}^\e_t) dt + \sigma^\e(\hat{q}^\e_t) dW_t + \hat{b}(\hat{q}^\e_t) dt + \hat{b}^\e(\hat{q}^\e_t) dt\\
\hat{q}^\e_0=q,
\end{cases}
\end{equation*}
and $\hat{q}^\e_t $ satisfies Hypothesis \ref{h4}. Therefore, the gluing condition holds for $\hat{q}^\e_t$.

For the process $q^\e_t$, the drift term $\hat{b}$ and $\hat{b}^\e$ need to be killed from $\hat{q}^\e_t$. I apply Girsanov Theorem up to any fixed time $T$. 

The measure $\mathbb{P}^\e_q$ induced by the process $q^\e_t$ on $\mathbf{C}([0,T];\mathbb{R}^2)$ is absolutely continuous to the measure $\hat{\mathbb{P}}^\e_q$ induced by $\hat{q}^\e_t$ on $\mathbf{C}([0,T]; \mathbb{R}^2)$ with the density
\begin{equation*}
\frac{d \mathbb{P}^\e_q}{d \hat{\mathbb{P}}^\e_q}|_{\mathcal{F}_t}= \exp \{ \int_0^t M^\e(\hat{q}^\e_s) dW_s - \frac{1}{2} \int_0^t |M^\e(\hat{q}^\e_s)|^2 ds\},\ \ \ 0\leq t \leq T
\end{equation*}
where
\begin{equation*}
M^\e(x)= -(\sigma +\sigma^\e)^{-1}(x) (\hat{b}+\hat{b}^\e)(x).
\end{equation*}
Compare the two measures of a set $A \in \mathcal{F}_T$, where the expectation, without specification, is taken with respect to the measure $\hat{\mathbb{P}}_q^\e$.
\begin{equation*}
\begin{aligned}
|\mathbb{P}^\e_q[A]- \hat{\mathbb{P}}^\e_q[A]|
=& |\int_A [\exp \{ \int_0^T M^\e(\hat{q}^\e_s) dW_s - \frac{1}{2} \int_0^T |M^\e(\hat{q}^\e_s)|^2 ds\}-1] d\hat{\mathbb{P}^\e_q}[\omega]|\\
\leq& (\hat{\mathbb{P}}^\e_q[A])^{1/2} {\mathbb{E}}^\e_q[ [ \int_0^T M^\e(\hat{q}^\e_s) dW_s - \frac{1}{2} \int_0^T |M^\e(\hat{q}^\e_s)|^2 ds-1]^2]^{1/2}.
\end{aligned}
\end{equation*}
For simplicity, we denote 
\begin{equation*}
Z^\e_t= \int_0^t M^\e(\hat{q}^\e_s) dW_s - \frac{1}{2} \int_0^t |M^\e(\hat{q}^\e_s)|^2 ds.
\end{equation*}
Apply Ito's formula to $[\exp(Z_t)-1]^2$:
\begin{equation*}
\begin{aligned}
&\mathbb{E}[\exp(Z^\e_t)-1]^2\\
=& \mathbb{E}[0 + \int_0^t 2[\exp(Z^\e_s)-1]\exp(Z^\e_s)dZ^\e_s \\
&+ \frac{1}{2} \int_0^t [2\exp(Z^\e_s)^2+2\exp(Z^\e_s)(\exp(Z^\e_s)-1)] d\langle Z^\e_s,Z^\e_s \rangle]\\
=& \mathbb{E}[\int_0^t \exp(2Z^\e_s)|M(\hat{q}^\e_s)|^2ds ].
\end{aligned}
\end{equation*}
By Hypothesis \ref{h3} and \ref{h4}, 
\begin{equation*}
\sup_{\e>0, x \in \mathbb{R}^2}|M^\e(x)|^2 = M
\end{equation*} 
and
\begin{equation*}
\begin{aligned}
&\mathbb{E}[[\exp(Z^\e_t)-1]^2]=\mathbb{E}[\int_0^t \exp(2Z^\e_s)|M^\e(\hat{q}^\e_s)|^2ds]\leq  M \mathbb{E}[\int_0^t \exp(2Z^\e_s)ds]\\
=& M \mathbb{E}[\int_0^t \exp(2\int_0^s M^\e(\hat{q}^\e_u)dW_u-\int_0^s |M^\e(\hat{q}^\e_u)|^2 du)]ds\\
=& M \mathbb{E}[\int_0^t \exp(\int_0^s 2M^\e(\hat{q}^\e_u)dW_u-\frac{1}{2}\int_0^s |2M^\e(\hat{q}^\e_u)|^2 du)\exp(\int_0^s |M^\e(\hat{q}^\e_u)|^2du)ds].\\
\end{aligned}
\end{equation*}
Then apply Girsanov theorem again to the last equation
\begin{equation*}
\begin{aligned}
&\mathbb{E}[\int_0^t \exp(\int_0^s 2M^\e(\hat{q}^\e_u)dW_u-\frac{1}{2}\int_0^s |2M^\e(\hat{q}^\e_u)|^2 du)\exp(\int_0^s |M^\e(\hat{q}^\e_u)|^2du)ds]\\
=& \int_0^t \tilde{\mathbb{E}}[\exp \int_0^s |M^\e(\hat{q}^\e_u)|^2du]ds\\
\leq&  C (e^{M T}-1).
\end{aligned}
\end{equation*}
This implies
\begin{equation*}
\hat{\mathbb{E}}^\e_q[ [ \int_0^T M^\e(\hat{q}^\e_s) dW_s - \frac{1}{2} \int_0^T |M^\e(\hat{q}^\e_s)|^2 ds-1]^2] \leq M_0 T.
\end{equation*}
Then let $T=\d$ and for $q\in \bar{D}_k(\pm \d'_g)$ as in Lemma \ref{lemma gluing condition},
\begin{equation*}
\begin{aligned}
&|\mathbb{P}^\e_q[q^\e_{\tau^\e_k(\pm \d)} \in C_{ki}(\d)]- p_{ki}|\\
\leq& |\mathbb{P}^\e_q[q^\e_{\tau^\e_k(\pm \d)} \in C_{ki}(\d)]- \hat{\mathbb{P}}^\e_q[q^\e_{\tau^\e_k(\pm \d)} \in C_{ki}(\d)]| + |\hat{\mathbb{P}}^\e_q[q^\e_{\tau^\e_k(\pm \d)} \in C_{ki}(\d)]- p_{ki}|\\
\leq&  |\mathbb{P}^\e_q[q^\e_{\hat{\tau}^\e_k(\pm \d)} \in C_{ki}(\d); \hat{\tau}^\e_k(\pm \d) < \d] - \hat{\mathbb{P}}^\e_q[\hat{q}^\e_{\tau^\e_k(\pm \d)} \in C_{ki}(\d) ; \hat{\tau}^\e_k(\pm \d) < \d]\\
& + 2 \mathbb{P}^\e_q[\hat{\tau}^\e_k(\pm \d) > \d]+ \kappa\\
\leq& \hat{\mathbb{P}}^\e_q[\hat{q}^\e_{\tau^\e_k(\pm \d)} \in C_{ki}(\d) ]^{1/2} \cdot M \d^{1/2} + 2 \mathbb{P}^\e_q[\hat{\tau}^\e_k(\pm \d) > \d]+ \kappa,
\end{aligned}
\end{equation*}
and thanks to Lemma \ref{not sticky lemma}, we have 
\begin{equation*}
\mathbb{E}^\e_q[\hat{\tau}^\e_k(\pm \d) ] \leq M \d^2 \ln(\d).
\end{equation*}
Therefore, for sufficiently small $\d$,
\begin{equation*}
|\mathbb{P}^\e_q[q^\e_{\tau^\e(\pm \d)} \in C_{ki}(\d)]- p_{ki}| \leq M \d^{1/2} +  M \d \ln(\d) + \kappa < 3\kappa.
\end{equation*}
Therefore, we proved the general case.

\end{document}